\newtheorem{thm}{Theorem}[section]
\newtheorem{lemma}[thm]{Lemma}
\newtheorem{cor}[thm]{Corollary}
\newtheorem{exs}[thm]{Examples}
\theoremstyle{definition}
\newtheorem{defn}[thm]{Definition}
\newtheorem{rem}[thm]{Remark}
\newcommand{\set}[2]{\{#1\mid#2\}}
\newcommand{\erz}[1]{\langle#1\rangle}
\newcommand{\anzahl}[1]{\vphantom{#1}^{\#}#1}
\newcommand{\bm}{\partial}
\newcommand{\quotient}[2]{#1/#2}
\newcommand{\siehe}{cf.~}
\let\ker\undefined
\DeclareMathOperator{\im}{im}
\DeclareMathOperator{\ker}{ker}
\DeclareMathOperator{\Dim}{dim}
\DeclareMathOperator{\sk}{sk}
\DeclareMathOperator{\supp}{supp}
\DeclareMathOperator{\bd}{bd}
\DeclareMathOperator{\degf}{dgf}
\DeclareMathOperator{\gen}{gen}
\newcommand{\contradiction}{\ensuremath{\lightning}}
\newcommand{\N}{\mathbb{N}}
\newcommand{\Z}{\mathbb{Z}}
\newcommand{\ie}{i.\,e.\ }
\title{Shellability and Regularity of Chain Complexes over a Principal Ring}
\date{\today}
\author{Gerrit Grenzebach}
\author{Bj\"orn Walker}
\address{Fachbereich Mathematik, Universit\"at Bremen, Bibliothekstr.~1, 28359 Bremen, Germany}
\address{Gymnasium Carolinum, Gro\ss{}e Domsfreiheit 1, 49074 Osnabr\"uck, Germany}
\keywords{chain complex, homological algebra, cone, shellability, simplicial complex, homology}
\begin{document}
\pagenumbering{Roman}

\begin{abstract}
  The goal of this paper is to generalize some of the existing toolkit of 
  combinatorial algebraic topology in order to study the homology of abstract chain complexes.
  We define \emph{shellability of chain complexes} in a similar way as for cell complexes
  and introduce the notion of \emph{regular} chain complexes. 
  
  In the case of chain complexes coming from simplicial 
  complexes we recover the classical notions but, in contrast to the topological 
  case, in the abstract setting shellings turn out to be a weaker homological 
  invariant. In particular, we study special chain complexes which are \emph{cones}, 
  and a class of regular chain complexes, which we call \emph{totally regular}, for which 
  we can obtain complete homological information.  
\end{abstract}

\maketitle

\pagenumbering{arabic}
\pagestyle{headings}

\section{Introduction}
This paper aims to generalize the notion of shellability 
to abstract chain complexes. 
A simplicial complex is shellable if there is an order of its maximal simplices
$F_{1}$, $F_{2}$, \ldots, $F_{t}$ such that 
$(\bigcup_{i=1}^{k-1}F_{i})\cap F_{k}$
is a pure simplicial complex of dimension 
$\Dim F_{k}-1$ for each $1\leq k\leq t$ \citep[cf.][page~211]{Kozlov.2008a}.
Now, any simplicial complex gives rise to a chain complex 
\citep[cf.][page~104]{Hatcher.2008}, therefore one is naturally led to ask 
for an algebraic counterpart of shellability. 
As far as we know, this question has never been studied before.
As a point of interest, it turns out that 
the algebraic situation is more complicated than the combinatorial case.  
In the end, so called totally regular chain complexes (a special kind of regular ones) 
turn out to be an analogue to shellable simplicial complexes as they have the same homology. 

In Section~\ref{ssec:principalrings} we shortly present some basic facts
on principal rings and free modules before we introduce abstract chain complexes
in Section~\ref{sec:chain}. Thereby, we define critical basis elements as an analogue 
to spanning simplices in simplicial complexes \citep[cf.][page~212]{Kozlov.2008a}
and compute the homology of pure chain complexes having critical basis elements. 
Section~\ref{sec:acyclic} contains a small excursion about acyclic chain complexes 
and cones. 

Both the last sections deal with our main topic: shellable and regular chain 
complexes. After defining shellability for chain complexes in Section~\ref{sec:shell},
we prove the existence of a special shelling which we call monotonically descending.
We also show that $i$-skeletons of shellable chain complexes are shellable themselves. 
In contrast to shellable simplicial complexes, the homology of shellable chain complexes 
is not known in general. Therefore, we introduce regular chain complexes as a special 
class of shellable chain complexes in Section~\ref{sec:regular}. As above, any $i$-skeleton
of a regular chain complex is also regular itself. 
Finally, we compute the homology of special regular chain complexes 
which are called totally regular. 

\section{Preliminaries: Principal Rings and Free Modules}
\label{ssec:principalrings}

A \emph{principal ring} is a commutative ring with 1 which is an 
integral domain whose ideals are all principal \citep[cf.][page~35]{bosch}. 
If $M$ is a free module over a principal 
ring $R$, then every submodule of $M$ is free \citep[cf.][page~146]{lang}.

\medskip
We will mostly use finitely generated modules, \ie modules of the 
form $M=\bigoplus_{i=1}^{n}Re_{i}$. For these modules we define:
\begin{itemize}
	\item  The \emph{generating number} $\gen_{R}M$ is the minimal number of 
	elements which generate $M$ 
	\citep[cf.][page~90]{OeljeklausRemmert.1974}:
	\begin{equation*}
		\gen_{R}M:=\min\set{n\in\N}{M\text{ is generated by $n$ elements.}}
	\end{equation*}

	\item  The \emph{degree of freedom} $\degf_{R}M$ is the maximal 
	number of linearly independent elements in $M$
	\citep[cf.][page~110]{OeljeklausRemmert.1974}:
	\begin{equation*}
		\degf_{R}M:=\max\set{n\in\N}{\text{There are $n$ independent elements 
		in $M$.}}
	\end{equation*}	
\end{itemize}

The next two theorems are quite helpful and we will use them later 
in Section~\ref{ssec:critical}
\citep[cf.][pages~112~and~115]{OeljeklausRemmert.1974}.
\begin{thm}
	\label{thm:degree}
	Let $R$ be an integral domain. 
	Let $M$ and $N$ be finitely generated $R$-modules. 
	For any $R$-linear mapping $\varphi\colon M\rightarrow N$ holds:
	\begin{equation*}
		\degf_{R}(M)=\degf_{R}(\ker\varphi) + \degf_{R}(\im\varphi). 
	\end{equation*}
\end{thm}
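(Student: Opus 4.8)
The plan is to reduce the statement to the rank--nullity theorem for vector spaces by passing to the field of fractions. Write $K=\operatorname{Frac}(R)$ and $S=R\setminus\{0\}$, and recall that localization $M\mapsto S^{-1}M\cong M\otimes_{R}K$ is an exact functor from $R$-modules to $K$-vector spaces. The first and main step is the following claim: for every $R$-module $M$ with $\dim_{K}(M\otimes_{R}K)<\infty$ --- in particular for every finitely generated $M$ and for every submodule of such --- one has $\degf_{R}(M)=\dim_{K}(M\otimes_{R}K)$.

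To prove this claim, note that if $m_{1},\dots,m_{n}\in M$ are $R$-linearly independent, then any $K$-linear relation $\sum_{i}(a_{i}/b_{i})\,(m_{i}\otimes 1)=0$ can be cleared of denominators: multiplying by $\prod_{j}b_{j}$ gives $\sum_{i}\bigl(a_{i}\prod_{j\neq i}b_{j}\bigr)(m_{i}\otimes 1)=0$ with coefficients now in $R$, and since this element of $S^{-1}M$ vanishes there is some $t\in S$ with $\sum_{i}\bigl(t\,a_{i}\prod_{j\neq i}b_{j}\bigr)m_{i}=0$ in $M$; as $R$ is a domain and $t,\prod_{j\neq i}b_{j}\neq 0$, each $a_{i}=0$, so the images $m_{i}\otimes 1$ are $K$-independent. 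Conversely, any $R$-linear relation is also a $K$-linear relation, so $K$-independent images come from $R$-independent elements, and scaling a $K$-basis of $M\otimes_{R}K$ by elements of $S$ produces that many $R$-independent elements of $M$. The two resulting inequalities give the claim.

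With the claim in hand, apply $-\otimes_{R}K$ to the short exact sequence $0\to\ker\varphi\to M\to\im\varphi\to 0$; by exactness of localization we obtain a short exact sequence of $K$-vector spaces $0\to(\ker\varphi)\otimes_{R}K\to M\otimes_{R}K\to(\im\varphi)\otimes_{R}K\to 0$, in which $(\ker\varphi)\otimes_{R}K=\ker(\varphi\otimes 1)$ and $(\im\varphi)\otimes_{R}K=\im(\varphi\otimes 1)$. All three spaces are finite-dimensional ($\im\varphi$ is a quotient of the finitely generated $M$, and $(\ker\varphi)\otimes_{R}K$ sits inside $M\otimes_{R}K$), so rank--nullity for vector spaces yields $\dim_{K}(M\otimes_{R}K)=\dim_{K}\ker(\varphi\otimes 1)+\dim_{K}\im(\varphi\otimes 1)$. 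Rewriting each of the three terms via the claim gives exactly $\degf_{R}(M)=\degf_{R}(\ker\varphi)+\degf_{R}(\im\varphi)$.

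The routine parts --- exactness of localization and vector-space rank--nullity --- I would simply quote. The one delicate point, and the main obstacle, is the claim: the clearing-of-denominators step must be carried out honestly inside $S^{-1}M$, since an element $m/s$ vanishes there only after multiplication by some $t\in S$ (as $M$ may have torsion), and it is precisely here that the integral-domain hypothesis on $R$ is needed. I would also remark that a fully elementary alternative avoids localization altogether: choosing a maximal independent family in $\ker\varphi$ together with lifts to $M$ of a maximal independent family in $\im\varphi$ gives ``$\geq$'' at once; for ``$\leq$'' one takes a maximal independent family $x_{1},\dots,x_{n}$ in $M$, passes to a maximal subfamily whose $\varphi$-images are independent, and checks that the associated elements of $\ker\varphi$ obtained from the dependence relations of the remaining images are again independent --- once more using that $R$ is a domain to cancel nonzero coefficients.
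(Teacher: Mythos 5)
Your proof is correct. Note, however, that the paper itself does not prove this statement at all: Theorem~\ref{thm:degree} is quoted as a known fact from the cited linear-algebra text of Oeljeklaus and Remmert (pages~112 and~115), so there is no in-paper argument to compare against. What you supply is a complete, self-contained proof by passing to the quotient field $K=\operatorname{Frac}(R)$: your key claim $\degf_{R}(M)=\dim_{K}(M\otimes_{R}K)$ is established carefully (the clearing of denominators, the use of ``$x/1=0$ in $S^{-1}M$ iff $tx=0$ for some $t\neq 0$'', and the injectivity of $R\to K$ all genuinely need the integral-domain hypothesis, and you also correctly extend the claim to the possibly non-finitely-generated submodule $\ker\varphi$), after which exactness of localization and vector-space rank--nullity finish the argument. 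The elementary alternative you sketch at the end --- combining a maximal independent family in $\ker\varphi$ with lifts of one in $\im\varphi$ for ``$\geq$'', and manufacturing independent kernel elements from dependence relations among images for ``$\leq$'' --- is also sound and is essentially the kind of argument one finds in the cited textbook; the localization route is arguably cleaner because it delegates the bookkeeping to rank--nullity over a field. Either version would serve as a legitimate replacement for the external citation.
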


\begin{thm}
	\label{thm:genequaldegree}
	Let $R$ be an integral domain. 
	If $M$ is a finitely generated free $R$-module, then
	$\gen_{R}M=\degf_{R}M$. 
\end{thm}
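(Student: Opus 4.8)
The plan is to show the two inequalities $\gen_R M \le \degf_R M$ and $\degf_R M \le \gen_R M$ separately. For the inequality $\degf_R M \le \gen_R M$ one argues as follows: if $M$ is generated by $n$ elements $m_1,\dots,m_n$, then any $n+1$ elements $x_0,\dots,x_n$ of $M$ must be linearly dependent. Indeed, write each $x_j = \sum_i a_{ij} m_i$; this is more unknowns than equations over the fraction field, so there is a nontrivial $R$-linear relation after clearing denominators (here is where we use that $R$ is an integral domain, so it embeds in its field of fractions and a nonzero relation over the field can be scaled into $R$). Hence no more than $n$ independent elements exist, giving $\degf_R M \le n$; taking $n = \gen_R M$ yields the claim.

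For the reverse inequality $\gen_R M \le \degf_R M$ I would use the hypothesis that $M$ is \emph{free}, say $M = \bigoplus_{i=1}^{r} R e_i$. The standard basis elements $e_1,\dots,e_r$ are $r$ linearly independent elements (a relation $\sum a_i e_i = 0$ forces all $a_i = 0$ by directness of the sum), so $\degf_R M \ge r$. On the other hand $M$ is obviously generated by these $r$ elements, so $\gen_R M \le r$. It remains only to note that $r$ is well-defined, i.e.\ that the free rank $r$ does not depend on the chosen basis; this is the invariance of rank for free modules over a commutative ring, which one can again see by tensoring with the fraction field of $R$ to reduce to dimension of a vector space. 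Combining, $\gen_R M \le r \le \degf_R M$.

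Putting the two inequalities together gives $\gen_R M = \degf_R M$. The main obstacle — really the only subtle point — is the linear-algebra argument over the integral domain $R$ rather than a field: one must pass to the field of fractions $\operatorname{Quot}(R)$, carry out the familiar dimension count there, and then clear denominators to return to $R$, using injectivity of $M \hookrightarrow M \otimes_R \operatorname{Quot}(R)$ (which holds because $M$ is free, hence torsion-free). Once this passage is set up cleanly, both inequalities are immediate, and in fact this is exactly the content one expects from the cited reference \citep[pages~112~and~115]{OeljeklausRemmert.1974}.
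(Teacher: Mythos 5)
Your proposal is correct. The paper does not prove Theorem~\ref{thm:genequaldegree} at all --- it simply cites \citet[pages~112~and~115]{OeljeklausRemmert.1974} --- and your argument (bounding $\degf_R M\le\gen_R M$ by a dimension count over $\operatorname{Quot}(R)$ followed by clearing denominators, and $\gen_R M\le r\le\degf_R M$ from a basis) is exactly the standard proof that citation encapsulates; the only inessential flourish is the appeal to invariance of rank, which your chain of inequalities does not actually require since any single basis already yields $\gen_R M\le r\le\degf_R M$.
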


\section{About Chain Complexes}
\label{sec:chain}
In the following let $R$ always be a principal ring. We want to introduce
now the basic concepts of chain modules over a principal ring before we 
define an analogue to a \emph{spanning simplex} 
\citep[cf.][page~212]{Kozlov.2008a}.

\subsection{First Concepts}
\label{ssec:concepts}
\begin{defn}
	For every $\nu\in\N$ let $C_{\nu}$ be 
	a free module over $R$ with basis 
	$\Omega_{\nu}:=\{e^{\nu}_{1},\ldots,e^{\nu}_{k_{\nu}}\}$, 
	$k_{\nu}\geq 1$,
	or $\Omega_{\nu}:=\emptyset$.
	Let
	\begin{align*}
		\bm_{\nu} & \colon C_{\nu}\rightarrow C_{\nu -1}, \quad 
		\nu\geq 1,  \\
		\bm_{0} & \colon C_{0}\rightarrow 0
	\end{align*}
	be $R$-linear mappings 
	so that $\bm_{\nu}\circ\bm_{\nu +1}=0$ 
	(\ie $\im\bm_{\nu +1}\subset\ker\bm_{\nu}$). 
	Then we call 
	\begin{equation*}
		\ldots\rightarrow C_{i+1} \stackrel{\bm_{i+1}}{\longrightarrow}
		C_{i} \rightarrow \ldots
		\rightarrow 
		C_{1} \stackrel{\bm_{1}}{\longrightarrow}
		C_{0}\stackrel{\bm_{0}}{\longrightarrow} 0
	\end{equation*}
	 a \emph{chain complex} $(C, \Omega)$ with basis 
	$\Omega := \bigcup_{\nu\in\N}\Omega_{\nu}$. The free modules 
	$C_{\nu}$ are called $\emph{chain modules}$, 
	and the mappings $\bm_{\nu}$ are \emph{boundary mappings}.
	
	A chain complex $(C, \Omega)$ is \emph{finite of order} 
	$d$ if $C_{\nu}=0$ for all $\nu > d$ and $C_{d}\neq0$. 
\end{defn}

\begin{rem}
  We always consider chain complexes $C$ together with a fixed basis 
  $\Omega=\bigcup_{\nu\in\N}\Omega_{\nu}$ and we do not want to change 
  the basis of any chain complex except for permutations of the basis elements.   
\end{rem}

\begin{defn}
	\label{defn:skeleton}
	Let $(C, \Omega)$ be a chain complex. For $0\leq i$, 
	the \emph{$i$-skeleton} $\sk_{i}(C)$ is a 
	finite subcomplex of $C$ of order $i$
	whose chain modules are
	$\bigl(\sk_{i}(C) \bigr)_{\nu} = C_{\nu}$ for 
	$0\leq\nu\leq i$.
\end{defn}
So a basis of $\sk_{i}(C)$ is $\bigcup\limits_{\nu=0}^{i}\Omega_{\nu}$. 

In particular, if $(C,\Omega)$ is a finite chain complex of order $d$, then the 
d-skeleton $\sk_{d}(C)$ of $C$ is equal to $C$: $\sk_{d}(C)=C$. 

\begin{defn}
	Let $C_{\nu}$ be a chain module with basis 
	$\Omega_{\nu}=\{e^{\nu}_{1}, \ldots, e^{\nu}_{k_{\nu}}\}$ 
	in a chain complex $(C,\Omega)$. 
	For an element $x=\sum_{i=1}^{k_{\nu}}a_{i}e^{\nu}_{i}$, the 
	\emph{support} of $x$ is the set of all basis elements 
	$e^{\nu}_{i}$ with coefficient $a_{i}\neq 0$:
	\begin{equation*}
		\supp(x):=\set{e^{\nu}_{i}}{a_{i}\neq 0}\subset\Omega_{\nu}.
	\end{equation*}
	
	The \emph{boundary} of $x$ is the support of $\bm_{\nu}(x)$:
	\begin{equation*}
		\bd(x):=\supp(\bm_{\nu}x).
	\end{equation*}
\end{defn}

Notice the two facts:
\begin{align*}
	\supp(x)=\emptyset & \iff x=0;  \\
	\bd(x)=\emptyset & \iff x\in\ker(\bm_{\nu}).
\end{align*}

\begin{defn}
	A finite chain complex $(C, \Omega)$ of order $d$ is \emph{pure} if any 
	$e^{\nu}_{j}\in\Omega$ for $0\leq\nu\leq d-1$ and $0\leq j\leq k_{\nu}$ 
	is contained in the boundary 
	of some $e^{\nu+1}_{\ell}$, $1\leq \ell\leq k_{\nu+1}$, \ie
	\begin{equation*}
		e^{\nu}_{j}\in\bd(e^{\nu+1}_{\ell})
		=\supp (\bm_{\nu+1}e^{\nu+1}_{\ell}).
	\end{equation*}
\end{defn}

\begin{defn}
	A basis element $e\in\Omega$ of a chain complex $(C,\Omega)$ is called \emph{maximal}
	if it is not contained in the boundary of any other basis element. 
\end{defn}

\begin{rem}
	If $(C,\Omega)$ is a finite chain complex of order $d$ then
	all basis elements of $\Omega_{d}$ are maximal. Furthermore, 
	if $C$ is even a pure chain complex, the maximal basis elements 
	are exactly the ones in $\Omega_{d}$. 
\end{rem}

Now we introduce an analogue to a simplex in a simplicial 
complex:
\begin{defn}
	Let $(C, \Omega)$ be a chain complex with basis $\Omega$. For a
	basis element $e^{\mu}_{j}\in\Omega_{\mu}$ we denote the subcomplex
	of $C$ with $(C_{e^{\mu}_{j}},\Omega_{e^{\mu}_{j}})$ whose chain
	modules $\bigl(C_{e^{\mu}_{j}}\bigr)_{\nu}$ have the following
	bases $\bigl(\Omega_{e^{\mu}_{j}}\bigr)_{\nu}$:
	\begin{itemize}
		\item  $\bigl(\Omega_{e^{\mu}_{j}}\bigr)_{\nu}=\emptyset$
		\quad for $\nu\geq\mu+1$;
	
		\item  $\bigl(\Omega_{e^{\mu}_{j}}\bigr)_{\mu}=\{e^{\mu}_{j}\}$;
 		
		\item  $\bigl(\Omega_{e^{\mu}_{j}}\bigr)_{\nu}
		=\bigcup\limits_{e\in\bigl(\Omega_{e^{\mu}_{j}}\bigr)_{\nu+1}}\bd(e)$
		\quad for $0\leq\nu\leq\mu -1$. 
	\end{itemize}
\end{defn}

\begin{rem}
	The subcomplex $C_{e^{\mu}_{j}}$ is a pure finite chain complex of
	order $\mu$. In particular, 
	$\bigl(\Omega_{e^{\mu}_{j}}\bigr)_{\mu -1}=\bd(e^{\mu}_{j})$.
	
	If there is a basis element $e^{\lambda}_{i}\in\Omega_{\lambda}$ 
	so that
	$e^{\lambda}_{i}\in\Omega_{e^{\mu}_{j}}\cap\Omega_{e^{\kappa}_{\ell}}$ 
	for $\lambda < \mu, \kappa$, then 
	$\Omega_{e^{\lambda}_{i}}\subset
	\Omega_{e^{\mu}_{j}}\cap\Omega_{e^{\kappa}_{\ell}}$, \ie the 
	complex $C_{e^{\lambda}_{i}}$ is contained in the chain complex 
	generated by $\Omega_{e^{\mu}_{j}}\cap\Omega_{e^{\kappa}_{\ell}}$.
\end{rem}

\subsection{Critical Basis Elements in Chain Complexes}
\label{ssec:critical}
\begin{defn}
	\label{defn:critical}
	Let $(C,\Omega)$ be a chain complex over R. Let
	$\Gamma$ be the set of all maximal basis elements of $C$: 
	\begin{equation*}
		\Gamma := 
		\set{e\in\Omega}{e\not\in\bd(f) \text{ for all }f\in\Omega}.
	\end{equation*}
	Let each 
	basis	$\Omega_{\nu}$ of a chain module $C_{\nu}$ have an 
	ordering in which the elements of $\Omega_{\nu}\setminus\Gamma$ 
	come first. 
	A basis element $e^{\nu}_{j}\in(\Omega_{\nu}\cap\Gamma)$ with $j\geq 
	2$ is called:
	\begin{itemize}
		\item  \emph{critical} if there exist $a_{i}\in R$, 
		$1\leq i\leq j-1$, so that
		\begin{equation*}
			\bm_{\nu}(e^{\nu}_{j})
			=\sum_{i=1}^{j-1}a_{i}\bm_{\nu}(e^{\nu}_{i});
		\end{equation*}
	
		\item  \emph{precritical} if there exist $a_{i}\in R$, 
		$1\leq i\leq j$, $a_{j}\neq 0$, so that
		\begin{equation*}
			a_{j}\bm_{\nu}(e^{\nu}_{j})
			=\sum_{i=1}^{j-1}a_{i}\bm_{\nu}(e^{\nu}_{i});
		\end{equation*}
		
		\item  \emph{noncritical} if $e^{\nu}_{j}$ is neither 
		critical nor precritical. 
	\end{itemize}
\end{defn}

\begin{rem}
	\begin{enumerate}
		\item  A critical basis element corresponds to a 
		\emph{spanning simplex} in a simplicial complex 
		\citep[cf.][page~212]{Kozlov.2008a}.
	
		\item A critical basis element is always precritical.
		Conversely, a precritical element $e^{\nu}_{j}$ is critical if
		the coefficient $a_{j}$ is a unit in $R$. Hence, the terms
		\emph{precritical} and \emph{critical} coincide if the principal ring
		$R$ is a field.
	
		\item  $\supp\bigl(a_{j}\bm_{\nu}(e^{\nu}_{j})\bigr)
		=\supp\bigl(\bm_{\nu}(e^{\nu}_{j})\bigr)$ if $a_{j}\neq 0$.
		
		\item  It is possible to change the ordering of a basis $\Omega_{\nu}$ so 
		that all noncritical basis elements come first. 
	\end{enumerate}
\end{rem}

In a pure chain complex of order $d$, the precritical elements in the 
chain module basis $\Omega_{d}$ can be seen as the generators of homology. 
If all  basis elements of $\Omega_{d}$ are either 
noncritical or critical, we can name a basis of $H_{d}(C)$.  
We follow Bj\"orner \cite[page~254]{Bjorner.1992} who has done this 
for the special case of simplicial complexes and generalize his proof 
to chain complexes. 

To formulate a theorem for $d\geq 1$ we introduce a new notation. 
Let 
$C_{\nu}$ be a chain module generated by 
$\Omega_{\nu}:=\{e^{\nu}_{1},\ldots,e^{\nu}_{k_{\nu}}\}$. Consider 
$\rho\in C_{\nu}$, 
$\rho=\sum_{i=1}^{k_{\nu}}a_{i}e^{\nu}_{i}$. Then we denote the 
coefficient of $e^{\nu}_{i}$ with $\rho(e^{\nu}_{i}):=a_{i}$.
\begin{rem}
  \label{rem:complexoforderzero}
  Consider a finite chain complex $(C,\Omega)$ of order $0$
  with basis $\Omega=\Omega_{0}=\{e^{0}_{1},\ldots,e^{0}_{k_{d}}\}$. 
  There are $(k_{0}-1)$ critical basis elements 
  and $H_{0}(C)\cong R^{k_{0}}$. 
\end{rem}

\begin{thm}
	Let $(C,\Omega)$ be a pure finite chain complex of order $d\geq 1$ and 
	$\Omega_{d}$ be a basis of $C_{d}$ with $k_{d}\geq 1$ elements. Let 
	there be $n<k_{d}$ critical elements $g_{1},\ldots,g_{n}$ in 
	$\Omega_{d}$ and all other basis elements be noncritical. 
	Let $\Omega_{d}$ be ordered in such a way that the 
	noncritical elements come first:
	\begin{equation*}
		\Omega_{d}=\{e_{1},\ldots,e_{m},g_{1},\ldots,g_{n}\},\qquad 
		m+n=k_{d}.
	\end{equation*}
	Then the following holds:
	\begin{enumerate}
		\item  $H_{d}(C) \cong R^{n}$.
	
		\item For $n\geq 1$, there exist unique cycles
		$\rho_{1},\ldots,\rho_{n}$ in $H_{d}(C)\cong\ker\bm_{d}$ so
		that $\rho_{i}(g_{j})=\delta_{ij}$.
	
		\item For $n\geq 1$, $\{\rho_{1},\ldots,\rho_{n}\}$ is a basis 
		of $H_{d}(C)$.
	\end{enumerate}
\end{thm}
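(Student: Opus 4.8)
The plan is to exploit the key structural fact about a pure chain complex of order $d$: the homology $H_d(C)$ is just the kernel $\ker\bm_d \subset C_d$, since $C_{d+1} = 0$ means there are no $d$-boundaries. So everything reduces to understanding $\ker\bm_d$ as a submodule of the free module $C_d = \bigoplus_i R e_i \oplus \bigoplus_j R g_j$. By the theorem on principal rings quoted in Section~\ref{ssec:principalrings}, $\ker\bm_d$ is itself free, so $H_d(C) \cong R^r$ for some rank $r$, and the whole content of part~(1) is to show $r = n$, the number of critical elements.

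First I would compute $r = \degf_R(\ker\bm_d)$ using Theorem~\ref{thm:degree} applied to $\bm_d\colon C_d \to C_{d-1}$: this gives $\degf_R(C_d) = \degf_R(\ker\bm_d) + \degf_R(\im\bm_d)$, and since $C_d$ is free of rank $k_d = m+n$ we get $r = k_d - \degf_R(\im\bm_d)$ by Theorem~\ref{thm:genequaldegree}. So I must show $\degf_R(\im\bm_d) = m$, i.e.\ that the images $\bm_d(e_1), \ldots, \bm_d(e_m), \bm_d(g_1), \ldots, \bm_d(g_n)$ span a submodule of degree of freedom exactly $m$. Here the definitions of noncritical/precritical do the work: each critical $g_j$ satisfies $\bm_d(g_j) = \sum_{i=1}^{j-1} a_i \bm_d(e^d_i)$ with the index running only over earlier elements — since all $e_i$ come before all $g_j$, but a critical $g_j$ could in principle reference earlier $g$'s too; so I would need the observation (to be made carefully) that by repeated substitution each $\bm_d(g_j)$ lies in the span of $\bm_d(e_1), \ldots, \bm_d(e_m)$, hence $\im\bm_d = \langle \bm_d(e_1), \ldots, \bm_d(e_m)\rangle$. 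Conversely, I must show $\bm_d(e_1), \ldots, \bm_d(e_m)$ are linearly independent: if $\sum c_i \bm_d(e_i) = 0$ with some $c_i \neq 0$, take the largest such index $\ell$; then $c_\ell \bm_d(e_\ell) = -\sum_{i<\ell} c_i \bm_d(e_i)$ exhibits $e_\ell$ as precritical, contradicting noncriticality. This gives $\degf_R(\im\bm_d) = m$ and hence (1).

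For (2), I would argue by induction building the cycles from the ``top'' down. The cycle $\rho_n$ should be $g_n$ minus a correction supported on $e_1, \ldots, e_m$: by criticality $\bm_d(g_n) = \sum_{i<n}$ (terms), and after substituting away the $\bm_d(g_j)$-terms as above one writes $\bm_d(g_n) = \sum_{i=1}^m b_i \bm_d(e_i)$, so $\rho_n := g_n - \sum_i b_i e_i \in \ker\bm_d$ with $\rho_n(g_n) = 1$ and $\rho_n(g_j) = 0$ for $j < n$; here \emph{uniqueness} of the $b_i$ follows from the independence of $\bm_d(e_1), \ldots, \bm_d(e_m)$ just established. Repeating for each $g_j$ produces $\rho_j$ with $\rho_j(g_k) = \delta_{jk}$ for $k \geq j$; a small triangular clean-up (subtract multiples of $\rho_{j+1}, \ldots, \rho_n$, which is again forced uniquely) makes $\rho_j(g_k) = \delta_{jk}$ for all $k$. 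Uniqueness overall: if $\rho, \rho'$ both have $\rho(g_k) = \rho'(g_k) = \delta_{jk}$ then $\rho - \rho' \in \ker\bm_d$ is supported entirely on $\{e_1, \ldots, e_m\}$, but no nonzero element of $C_d$ supported on the $e_i$ alone can be a cycle (that would again make some $e_\ell$ precritical), so $\rho = \rho'$.

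For (3): the $\rho_1, \ldots, \rho_n$ are $R$-linearly independent because the map $\rho \mapsto (\rho(g_1), \ldots, \rho(g_n))$ sends them to the standard basis of $R^n$; and they span $\ker\bm_d$ because given any cycle $\sigma$, the element $\sigma - \sum_j \sigma(g_j)\rho_j$ is a cycle supported on $\{e_1, \ldots, e_m\}$, hence zero by the observation just used. Since we also know $\ker\bm_d$ is free of rank $n$ from (1), either spanning or independence alone would in fact suffice, but having both is clean. \textbf{The main obstacle} I anticipate is the bookkeeping in the claim that every $\bm_d(g_j)$ reduces to the span of the $\bm_d(e_i)$: the definition of \emph{critical} allows $\bm_d(g_j)$ to involve earlier \emph{critical} elements as well, so one needs an induction on $j$ (or on the ordering of $\Omega_d\cap\Gamma$) to eliminate those, and one must be slightly careful that this substitution process terminates and does not secretly require inverting any ring elements — it does not, since only the earlier-index structure, not unit coefficients, is used.
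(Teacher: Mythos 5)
Your proposal is correct, and for parts (2) and (3) it follows essentially the same route as the paper: set $\rho_i=g_i-\widehat\rho_i$ with the correction $\widehat\rho_i$ supported on the noncritical elements, and use the fact that no nonzero cycle is supported on $\{e_1,\ldots,e_m\}$ alone (the paper packages this as $H_d(\widehat C)=0$ for the subcomplex $\widehat C=\bigcup_{i=1}^{m}C_{e_i}$, proved by exactly your ``largest nonzero index would be precritical'' argument) to get uniqueness, independence and spanning. Where you diverge is part (1): you compute $\degf_{R}(\ker\bm_{d})=k_{d}-\degf_{R}(\im\bm_{d})=n$ via Theorems~\ref{thm:degree} and~\ref{thm:genequaldegree}, whereas the paper simply reads $H_{d}(C)\cong R^{n}$ off the basis produced in (2)--(3); your rank count is precisely the argument the paper reserves for the more general precritical case (Theorem~\ref{thm:precritical}), so here it is redundant but sound, and it buys nothing extra since (3) must be proved anyway. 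A point in your favour: you explicitly flag the substitution needed to rewrite $\bm_{d}(g_{j})$ purely in terms of $\bm_{d}(e_{1}),\ldots,\bm_{d}(e_{m})$ --- the paper silently asserts the existence of $\widehat\rho_{i}\in\widehat C_{d}$ with $\bm_{d}(\widehat\rho_{i})=\bm_{d}(g_{i})$ even though the criticality relation may involve earlier $g$'s --- and your observation that the elimination needs no unit inversions (the critical coefficient is $1$) is exactly what makes it work. Two minor quibbles: since your corrections are supported only on the $e_{i}$, you already have $\rho_{j}(g_{k})=\delta_{jk}$ for \emph{all} $k$, so the ``triangular clean-up'' is unnecessary; and your aside that independence alone would suffice for (3) given (1) is false over a general principal ring ($n$ independent elements of $R^{n}$ need not generate, \eg $2\Z\subset\Z$), though spanning alone would suffice, and since you prove both this does not affect the argument.
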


\begin{proof}
	We start with the case $n=0$, \ie $\Omega_{d}$ has only noncritical 
	elements, so $\Omega_{d}=\{e_{1},\ldots,e_{m}\}$. 
	
	We assume: There exists an element $x\in C_{d}$ with 
	$x=\sum_{i=1}^{m}a_{i}e_{i}\neq 0$, so that $\bm_{d}(x)=0$. 
	
	As $x\neq 0$ there is some $a_{i}\neq 0$.
	We define 
	$i_{0}:=\max\set{i\leq m}{a_{i}\neq 0}$ and conclude
	$a_{i_{0}}\bm_{d}(e_{i_{0}})=\sum_{i<i_{0}}(-a_{i})\bm_{d}(e_{i})$, so $e_{i_{0}}$ 
	is not noncritical. {\contradiction } 
 	
	Hence, $\ker\bm_{d}=0$, \ie $H_{d}(C)=0$. 

	For $n\geq 1$, the first statement is a consequence of the second and third, 
	so we start proving the second statement using induction. Having $n\geq 1$ 
	critical elements, 
	$\Omega_{d}=\{e_{1},\ldots,e_{m},g_{1},\ldots,g_{n}\}$.
	
	We consider the chain complex $\widehat C := \bigcup_{i=1}^{m}C_{e_{i}}$, 
	whose chain modules are: $\widehat C_{d}=\erz{e_{1},\ldots,e_{m}}$, 
	$\widehat C_{\nu}=C_{\nu}$ for $d-1\geq\nu\geq 0$.
	The chain complex $\widehat C$ is a pure finite subcomplex of $C$ 
	of order $d$ without precritical elements, as the only precritical 
	elements in $\Omega_{d}$ are $g_{1},\ldots,g_{n}$. So 
	$H_{d}(\widehat C)=0$. 
	
	As the elements $g_{i}$ are all critical, there exists a 
	$\widehat\rho_{i}\in\widehat C_{d}$ for every $1\leq i\leq n$ so that 
	$\bm_{d}(\widehat\rho_{i})=\bm_{d}(g_{i})$. Let
	$\rho_{i}:=g_{i}-\widehat\rho_{i}$, then
	$\bm_{d}(\rho_{i})=\bm_{d}(g_{i})-\bm_{d}(\widehat\rho_{i})=0$;
	so $\rho_{i}\in\ker\bm_{d}\cong H_{d}(C)$. 
	
	Because $\widehat\rho_{i}\in\widehat C_{d}$ for every $1\leq i\leq n$, it 
	is $\widehat\rho_{i}=\sum_{\ell=1}^{m}a_{\ell}^{i}e_{\ell}$
	with $a_{\ell}^{i}\in R$. 
	We get:
	$\rho_{i}=g_{i} + \sum_{\ell=1}^{m}(-a_{\ell}^{i})e_{\ell}$. So there is 
	only $g_{i}$ with coefficient $1$ in $\rho_{i}$:
	\begin{equation*}
		\rho_{i}(g_{j})=\delta_{ij}=
		\begin{cases}
			1 & \text{if }i=j, \\
			0 & \text{if }i\neq j.
		\end{cases}
	\end{equation*}
	Therefore, we have proved the second statement up to uniqueness. 
	
	Consider $\sigma_{i}\in H_{d}(C)\cong\ker(\bm_{d})$ so that 
	$\sigma_{i}=g_{i}+\sum_{\ell =1}^{m}b_{\ell}^{i}e_{\ell}$.  
	Then we get $\sigma_{i}(g_{j})=\delta_{ij}$. We conclude
	$\sigma_{i}-\rho_{i}
	=\sum_{\ell=1}^{m}(b_{\ell}^{i}+a_{\ell}^{i})e_{\ell}$, so 
	$\sigma_{i}-\rho_{i}\in H_{d}(\widehat C)=0$. It follows 
	$\sigma_{i}=\rho_{i}$, so we have shown uniqueness. 
	
	\medskip
	Now we have to show that $\{\rho_{1},\ldots,\rho_{n}\}$ generates 
	$H_{d}(C)$ and is linearly independent. 
	
	Let $\sum_{i=1}^{n}a_{i}\rho_{i}=0$ with $a_{i}\in R$. Because 
	$\rho_{i}=g_{i}-\widehat\rho_{i}$, we get:
	\begin{equation*}
		0=\sum_{i=1}^{n}a_{i}g_{i} 
			- \underbrace{\sum_{i=1}^{n}a_{i}\widehat\rho_{i}.}_{
			\in\widehat C_{d}=\erz{e_{1},\ldots,e_{m}}}
	\end{equation*}
	As $\{e_{1},\ldots,e_{m},g_{1},\ldots,g_{n}\}$ is a basis of 
	$C_{d}$, we conclude $a_{i}=0$ for all $i$, so the elements 
	$\rho_{1},\ldots,\rho_{n}$ are independent. 
	
	We consider $\sigma\in H_{d}(C)\cong\ker(\bm_{d})\subset C_{d}$ 
	and define $\tau:=\sigma -\sum_{i=1}^{n}\sigma(g_{i})\rho_{i}$. 
	For $1\leq j\leq n$, the coefficient $\tau(g_{j})$ of $g_{j}$ in 
	$\tau$ is:
	\begin{equation*}
		\tau(g_{j}) = \sigma(g_{j}) - 
		\sum_{i=1}^{n}\sigma(g_{i})\underbrace{\rho_{i}(g_{j})}_{=\delta_{ij}}
		=\sigma(g_{j}) - \sigma(g_{j}) = 0.
	\end{equation*}
	Hence, $\tau$ is a combination of $e_{1},\ldots,e_{m}$, and we 
	conclude $\tau\in H_{d}(\widehat C)=0$, as 
	$\tau\in\ker\bm_{d}|_{\widehat C_{d}}$. So we get 
	$\sigma = \sum_{i=1}^{n}\sigma(g_{i})\rho_{i}$, \ie 
	$\{\rho_{1},\ldots,\rho_{n}\}$ generates $H_{d}(C)$.
	
	Hence, $\{\rho_{1},\ldots,\rho_{n}\}$ is a basis of $H_{d}(C)$, 
	and because of $\rho_{i}(g_{j})=\delta_{ij}$ we get 
	$H_{d}(C)\cong R^{n}$.
\end{proof}

In general, there are also precritical elements which are not critical. 
In this case we only know $H_{d}(C)\cong R^{n}$, but we cannot name a basis. 

\begin{thm}
	\label{thm:precritical}
	Let $(C,\Omega)$ be a pure finite chain complex of order $d\geq 1$ and 
	$\Omega_{d}$ be a basis of $C_{d}$ with $k_{d}\geq 1$ elements. Let 
	there be $n<k_{d}$ precritical elements $g_{1},\ldots,g_{n}$ in 
	$\Omega_{d}$. Let $\Omega_{d}$ be ordered in such a way that the 
	noncritical elements come first:
	\begin{equation*}
		\Omega_{d}=\{e_{1},\ldots,e_{m},g_{1},\ldots,g_{n}\},\qquad 
		m+n=k_{d}.
	\end{equation*}
	Then $H_{d}(C) \cong R^{n}$.
\end{thm}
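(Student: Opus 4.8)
The plan is to identify $H_{d}(C)$ with the free module $\ker\bm_{d}$ and then pin it down by computing its degree of freedom through Theorems~\ref{thm:degree} and~\ref{thm:genequaldegree}; the statement reduces to showing $\degf_{R}(\im\bm_{d})=m$. Since $(C,\Omega)$ has order $d$ we have $C_{d+1}=0$, hence $H_{d}(C)=\ker\bm_{d}$, and this is a submodule of the finitely generated free module $C_{d}$ over the principal ring $R$, hence itself finitely generated and free; by Theorem~\ref{thm:genequaldegree} it is therefore isomorphic to $R^{\degf_{R}(\ker\bm_{d})}$. Applying Theorem~\ref{thm:degree} to $\bm_{d}\colon C_{d}\to C_{d-1}$ and Theorem~\ref{thm:genequaldegree} to the free module $C_{d}$ of rank $k_{d}=m+n$ gives
\begin{equation*}
  \degf_{R}(\ker\bm_{d})=\degf_{R}(C_{d})-\degf_{R}(\im\bm_{d})=m+n-\degf_{R}(\im\bm_{d}),
\end{equation*}
so it suffices to prove $\degf_{R}(\im\bm_{d})=m$.

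For $\degf_{R}(\im\bm_{d})\geq m$ I would show that $\bm_{d}(e_{1}),\ldots,\bm_{d}(e_{m})$ are $R$-linearly independent. Given a nontrivial relation $\sum_{i=1}^{m}c_{i}\bm_{d}(e_{i})=0$, let $i_{0}$ be the largest index with $c_{i_{0}}\neq 0$; then $c_{i_{0}}\bm_{d}(e_{i_{0}})=\sum_{i<i_{0}}(-c_{i})\bm_{d}(e_{i})$ exhibits $e_{i_{0}}$ as precritical, contradicting noncriticality, unless $i_{0}=1$, in which case $c_{1}\bm_{d}(e_{1})=0$ with $c_{1}\neq 0$ forces $\bm_{d}(e_{1})=0$ since $C_{d-1}$ is torsion-free. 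Equivalently, this is the $n=0$ case of the previous theorem applied to the pure subcomplex $\widehat{C}:=\bigcup_{i=1}^{m}C_{e_{i}}$ with $\widehat{C}_{d}=\erz{e_{1},\ldots,e_{m}}$ and $\widehat{C}_{\nu}=C_{\nu}$ for $\nu<d$, all of whose degree-$d$ basis elements are noncritical, so that $H_{d}(\widehat{C})=\ker(\bm_{d}|_{\widehat{C}_{d}})=0$.

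For $\degf_{R}(\im\bm_{d})\leq m$ I would use precriticality of the $g_{j}$. By induction on $j$ one shows that for each $1\leq j\leq n$ there is a nonzero $c_{j}\in R$ with $c_{j}\bm_{d}(g_{j})\in N:=\sum_{i=1}^{m}R\,\bm_{d}(e_{i})$: the precriticality relation for $g_{j}$ (which sits at position $m+j$ in the ordering) has the form $a\,\bm_{d}(g_{j})=\sum_{i=1}^{m}a_{i}\bm_{d}(e_{i})+\sum_{l=1}^{j-1}b_{l}\bm_{d}(g_{l})$ with $a\neq 0$, and multiplying through by the product of the $c_{l}$ from the induction hypothesis puts the right-hand side into $N$; take $c_{j}$ to be $a$ times that product. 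Since $\im\bm_{d}$ is generated by $\bm_{d}(e_{1}),\ldots,\bm_{d}(e_{m}),\bm_{d}(g_{1}),\ldots,\bm_{d}(g_{n})$, the nonzero element $c:=c_{1}\cdots c_{n}$ satisfies $c\cdot\im\bm_{d}\subseteq N$; multiplication by $c$ is injective on $C_{d-1}$ by torsion-freeness, so $\im\bm_{d}\cong c\cdot\im\bm_{d}$ and hence $\degf_{R}(\im\bm_{d})=\degf_{R}(c\cdot\im\bm_{d})\leq\degf_{R}(N)\leq m$, the last inequality because $N$ is generated by $m$ elements (apply Theorem~\ref{thm:degree} to a surjection $R^{m}\twoheadrightarrow N$ together with Theorem~\ref{thm:genequaldegree}). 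Combined with the previous paragraph this gives $\degf_{R}(\im\bm_{d})=m$, hence $\degf_{R}(\ker\bm_{d})=n$ and $H_{d}(C)\cong R^{n}$.

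The step I expect to be the main obstacle is this reverse inequality, and inside it the bookkeeping that turns the precriticality relations — whose leading coefficients $a$ need not be units — into a clean statement about degrees of freedom. The conceptually cleanest way to organise it is to tensor everything with the fraction field $K=\mathrm{Frac}(R)$, using $\degf_{R}(M)=\Dim_{K}(M\otimes_{R}K)$ and exactness of $-\otimes_{R}K$, so that $\im\bm_{d}\otimes_{R}K$ is $K$-spanned by $\bm_{d}(e_{1}),\ldots,\bm_{d}(e_{m})$; the fact that the $a$ are not invertible over $R$ is exactly why one obtains $H_{d}(C)\cong R^{n}$ but, in general, no preferred basis. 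A minor additional point is the degenerate case $\bm_{d}(e_{1})=0$ in the independence argument; it is harmless provided the ordering of $\Omega_{d}$ is chosen so that a basis element which is itself a cycle never occupies the first position — such an element is then critical and is counted among the $g_{j}$.
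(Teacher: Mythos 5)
Your proof is correct, and its skeleton is the same as the paper's: identify $H_{d}(C)$ with the free module $\ker\bm_{d}$, use $\degf_{R}(C_{d})=\degf_{R}(\ker\bm_{d})+\degf_{R}(\im\bm_{d})=m+n$, and get the lower bound $\degf_{R}(\im\bm_{d})\geq m$ from the linear independence of $\bm_{d}(e_{1}),\ldots,\bm_{d}(e_{m})$ (equivalently, $\ker\bm_{d}|_{\widehat C_{d}}=0$ for $\widehat C=\bigcup_{i=1}^{m}C_{e_{i}}$). Where you diverge is in the complementary inequality: the paper constructs $n$ explicit independent cycles $\rho_{i}=a_{i}g_{i}-\widehat\rho_{i}$ with $\widehat\rho_{i}\in\widehat C_{d}$, giving $\degf_{R}(\ker\bm_{d})\geq n$, whereas you bound the image from above, producing a single nonzero $c\in R$ with $c\cdot\im\bm_{d}\subseteq N=\sum_{i=1}^{m}R\,\bm_{d}(e_{i})$ and concluding $\degf_{R}(\im\bm_{d})\leq\degf_{R}(N)\leq m$. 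Given the rank formula these are two sides of the same coin, but your version has a concrete advantage: the inductive ``clearing of denominators'' that eliminates the $\bm_{d}(g_{l})$-terms from the precriticality relation of $g_{j}$ is exactly the step the paper silently uses when it asserts that $\bm_{d}(a_{i}g_{i})=\bm_{d}(\widehat\rho_{i})$ for some $\widehat\rho_{i}$ supported only on $e_{1},\ldots,e_{m}$ — the raw precriticality relation for $g_{j}$ a priori involves the earlier $g_{l}$ as well. Your remark about the degenerate case $\bm_{d}(e_{1})=0$ points at a genuine blemish in Definition~\ref{defn:critical} (which classifies only basis elements with $j\geq 2$), but it afflicts the paper's argument in exactly the same way and your fix is the natural one; it is not a gap specific to your proof.
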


\begin{proof}
	The case $n=0$ is already proven. 
	We consider the general case with $n\geq 1$ precritical 
	elements, so 
	$\Omega_{d}=\{e_{1},\ldots,e_{m},g_{1},\ldots,g_{n}\}$. 

	As above we consider the chain complex 
	$\widehat C := \bigcup_{i=1}^{m}C_{e_{i}}$ which is a pure  
	subcomplex of $C$ of order $d$ without precritical elements, hence 
	$H_{d}(\widehat C)=0$.

	As the elements $g_{i}$ are all precritical, there exist some 
	$\widehat\rho_{i}\in\widehat C_{d}$ and $0\neq a_{i}\in R$
	for every $1\leq i\leq n$ 
	so that $\bm_{d}(\widehat\rho_{i})=\bm_{d}(a_{i}g_{i})$. 
	We define 
	$\rho_{i}:=a_{i}g_{i}-\widehat\rho_{i}$ and conclude:
	$\bm_{d}(\rho_{i})=\bm_{d}(a_{i}g_{i})-\bm_{d}(\widehat\rho_{i})=0$.
	Hence $\rho_{i}\in\ker\bm_{d}\cong H_{d}(C)$. 
		
	We show that the elements $\rho_{1},\ldots,\rho_{n}$ are 
	linearly independent. 
	Let $\sum_{i=1}^{n}c_{i}\rho_{i}=0$ with $c_{i}\in R$. It is 
	$\rho_{i}=a_{i}g_{i}-\widehat\rho_{i}$, so we get:
	\begin{equation*}
		0=\sum_{i=1}^{n}c_{i}a_{i}g_{i} 
			- \underbrace{\sum_{i=1}^{n}c_{i}\widehat\rho_{i}.}_{
			\in\widehat C_{d}=\erz{e_{1},\ldots,e_{m}}}
	\end{equation*}
	As $\{e_{1},\ldots,e_{m},g_{1},\ldots,g_{n}\}$ is a basis of 
	$C_{d}$, we conclude $c_{i}a_{i}=0$ for all $i$, so $c_{i}= 0$. 
	Hence the elements 
	$\rho_{1},\ldots,\rho_{n}$ are independent. 
	
	\medskip
	Recall the generating number $\gen_{R}M$ and the degree of 
	freedom $\degf_{R}M$ which we have introduced in 
	Section~\ref{ssec:principalrings} for finitely generated $R$-modules $M$. 
	About the boundary mapping $\bm_{d}\colon C_{d}\rightarrow C_{d-1}$ 
	we know due to Theorem~\ref{thm:degree}:
	\begin{equation}
		\label{eq:degree}
		\degf_{R}(C_{d})=\degf_{R}(\ker\bm_{d}) + \degf_{R}(\im\bm_{d}).
	\end{equation}
	We have shown above that there are $n$ independent elements in 
	$\ker\bm_{d}$, hence $\degf_{R}(\ker\bm_{d})\geq n$. 
	
	$C_{d}$ is generated by $(m+n)$ elements. The equality 
	$\gen_{R}(C_{d})=\degf_{R}(C_{d})$ implies
	$\degf_{R}(C_{d})=m+n$. 
	
	Consider now the subcomplex $\widehat C$. Because all 
	$g_{i}$ are precritical, 
	$\im\bm_{d}\supset\im\bm_{d}|_{\widehat C_{d}}$. 
	By Theorem~\ref{thm:degree} we get:
	\begin{equation*}
		m=
		\degf_{R}(\widehat C_{d})
		= \degf_{R}\underbrace{(\ker\bm_{d}|_{\widehat C_{d}})}_{=0} 
		+ \degf_{R}(\im\bm_{d}|_{\widehat C_{d}})
		\leq \degf_{R}(\im\bm_{d}).
	\end{equation*}
	Applying Equation~\eqref{eq:degree} we conclude:
	\begin{equation*}
		\degf_{R}(\ker\bm_{d})=n
		\quad\text{and}\quad
		\degf_{R}(\im\bm_{d})=m. 
	\end{equation*}
	According to Theorem~\ref{thm:genequaldegree},
	$\gen_{R}(\ker\bm_{d})=n$, therefore $H_{d}(C)\cong R^{n}$. 
\end{proof}

\subsection{Reduced Homology}
Notice that there are chain complexes for which the augmentation homomorphism
$\epsilon$ must be $0$. 
For example, consider the chain complex of order $1$
over $\Z$ whose chain modules have the bases $\Omega_{1}=\{e^{1}_{1}\}$
and $\Omega_{0}=\{e^{0}_{1}\}$ with $\bm_{1}(e^{1}_{1})=e^{0}_{1}$. 

If there is a basis element $e^{0}_{j}\in\Omega_{0}$ which is not contained 
in the boundary of any basis element of $\Omega_{1}$ there is always a 
mapping $\epsilon\neq 0$ by defining $\epsilon(e^{0}_{j})=1$. In particular, 
$\epsilon$ can be defined this way for every finite chain complex of order $0$. 
For chain complexes of order $d\geq 1$ we treat a special case:

\begin{lemma}
  \label{lem:epsilon}
  Let $(C,\Omega)$ be a chain complex of order $d\geq 1$. 
  Let  $\Omega_{1}=\{e^{1}_{1},\ldots,e^{1}_{k_{1}}\}$ 
  and $\Omega_{0}=\{e^{0}_{1},\ldots,e^{0}_{k_{0}}\}$ 
  be bases of the chain modules $C_{1}$ and $C_{0}$.
  Let $\anzahl\bd(x)\geq 2$ for every $x\in C_{1}\setminus\ker\bm_{1}$. 
  Then a $R$-linear mapping $\epsilon\colon C_{0}\rightarrow R$ exists 
  such that $\epsilon(e^{0}_{\ell})\neq 0$ for all $1\leq \ell\leq k_{0}$.
\end{lemma}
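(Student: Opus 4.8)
The condition "$\anzahl\bd(x)\ge 2$ for every $x\in C_1\setminus\ker\bm_1$" says that no nonzero element of $\im\bm_1$ is supported on a single basis element of $\Omega_0$; equivalently, $\im\bm_1$ contains no nonzero multiple of any $e^0_\ell$. We must build a linear functional $\epsilon\colon C_0\to R$ that is nonvanishing on every basis element and, implicitly, should vanish on $\im\bm_1$ so that it descends to (reduced) homology — though the statement as written only asks for the nonvanishing-on-basis-elements part. The natural idea is: choose $\epsilon$ to vanish on $\im\bm_1$, then argue that the hypothesis forces such an $\epsilon$ to be nonzero on each $e^0_\ell$.

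**Key steps.** First I would pass to the quotient module $Q := C_0/\im\bm_1$ and let $\pi\colon C_0\to Q$ be the projection; since $C_0$ is free over the principal ring $R$ and submodules of free modules over principal rings are free (cited in Section~\ref{ssec:principalrings}), $\im\bm_1$ is free, but $Q$ need not be — so here I would instead work with the free module $F := C_0/T$ where $T$ is the saturation of $\im\bm_1$, i.e.\ $T = \set{x\in C_0}{rx\in\im\bm_1 \text{ for some } 0\neq r\in R}$. Then $C_0/T$ is torsion-free and finitely generated over a principal ring, hence free. Second, I claim $\pi(e^0_\ell)\neq 0$ in $C_0/T$ for every $\ell$: if $\pi(e^0_\ell)=0$ then $e^0_\ell\in T$, so $r\,e^0_\ell\in\im\bm_1$ for some $r\neq 0$, meaning there is $x\in C_1$ with $\bm_1(x)=r\,e^0_\ell\neq 0$; but then $x\notin\ker\bm_1$ and $\anzahl\bd(x)=1$, contradicting the hypothesis. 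Third, on the free module $C_0/T$ of rank $s$, I need a linear functional $\lambda$ that is nonzero on each of the finitely many nonzero vectors $\pi(e^0_1),\dots,\pi(e^0_{k_0})$; then $\epsilon := \lambda\circ\pi$ works. Such a $\lambda$ exists because $R$ is an infinite integral domain (a principal ring with finitely many elements would be a finite integral domain, hence a field, and we can handle that separately, or simply note that for each nonzero $v$ the set of functionals killing $v$ is a proper submodule of the dual, and a finite union of proper submodules cannot exhaust a free module over an integral domain with enough elements — a standard prime-avoidance / polynomial-nonvanishing argument).

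**Main obstacle.** The delicate point is step three: producing a single functional simultaneously avoiding finitely many nonzero vectors. Over a field this is immediate (Zariski-density / a nonempty Zariski-open set), but over a general principal ring $R$ one must be slightly careful. The clean route is to pick a basis $f_1,\dots,f_s$ of $C_0/T$, write each $\pi(e^0_\ell) = \sum_j c^\ell_j f_j$ with not all $c^\ell_j$ zero, and seek $\lambda$ determined by values $t_1,\dots,t_s\in R$ so that $\sum_j c^\ell_j t_j\neq 0$ for all $\ell$; this is asking the nonzero polynomials $P_\ell(t_1,\dots,t_s)=\sum_j c^\ell_j t_j\in R[t_1,\dots,t_s]$ to be jointly nonvanishing at some point of $R^s$, which holds because $R$ is an infinite integral domain (and the finitely-many-elements case is a field, where any nonzero linear functional in suitable coordinates does the job, or one reduces $s$ by taking $\lambda$ to be a coordinate that is nonzero on each $\pi(e^0_\ell)$ after a change of basis). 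I would phrase this as a short standalone sublemma — "a nonzero polynomial over an infinite integral domain does not vanish identically" — to keep the argument self-contained, and then conclude $\epsilon=\lambda\circ\pi$ has $\epsilon(e^0_\ell)=P_\ell(t_1,\dots,t_s)\neq0$ as required.
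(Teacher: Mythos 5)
Your proof establishes the statement the paper actually needs, namely that $\epsilon$ can in addition be chosen with $\epsilon\circ\bm_{1}=0$ (the literal statement, without this constraint, is satisfied by $\epsilon(e^{0}_{\ell})=1$ for all $\ell$), and at its core it is the same argument as the paper's: the hypothesis guarantees that no coordinate of $\epsilon$ is forced to vanish by the relations coming from $\im\bm_{1}$, and one then picks a single point of the solution set avoiding all coordinate hyperplanes at once. The paper phrases the first half as an informal row-reduction argument on the system $\sum_{\ell}a_{i\ell}\,\epsilon(e^{0}_{\ell})=0$ (``no row with a single nonzero entry appears''), which is really the observation that no nonzero multiple of any $e^{0}_{\ell}$ lies in $\im\bm_{1}$; your passage to the saturation $T$ of $\im\bm_{1}$ and the free quotient $C_{0}/T$ makes exactly this observation precise, and your polynomial-nonvanishing sublemma is the rigorous form of the paper's unjustified final sentence. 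So your route is the same in substance but considerably more careful; your first two steps are correct as written, and you also avoid the paper's preliminary reduction to a pure $1$-skeleton.

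The one genuine problem is your third step when $R$ is finite, i.e.\ a finite field. Neither of your proposed fixes works, and in fact the intended statement (with $\epsilon\circ\bm_{1}=0$) is false there: take $R=\Z/2\Z$, $C_{1}=\erz{e^{1}_{1}}$, $C_{0}=\erz{e^{0}_{1},e^{0}_{2},e^{0}_{3}}$ and $\bm_{1}(e^{1}_{1})=e^{0}_{1}+e^{0}_{2}+e^{0}_{3}$. The only element of $C_{1}\setminus\ker\bm_{1}$ is $e^{1}_{1}$, whose boundary has three elements, so the hypothesis holds; but any $\epsilon$ with all $\epsilon(e^{0}_{\ell})\neq 0$ must send each $e^{0}_{\ell}$ to $1$ and hence sends $\bm_{1}(e^{1}_{1})$ to $1\neq 0$. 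In your language, $\pi(e^{0}_{1}),\pi(e^{0}_{2}),\pi(e^{0}_{3})$ are the three nonzero vectors of $(\Z/2\Z)^{2}$ and every nonzero functional kills one of them, so no change of basis or choice of coordinate can help. The paper's own proof breaks on the same example (no row with a single nonzero entry ever appears, yet the conclusion fails), so this is a defect of the lemma over finite principal rings rather than of your argument specifically; for infinite $R$ --- in particular $R=\Z$, which covers every example in the paper --- your proof is complete, but the finite case should be excluded from the hypotheses rather than ``handled separately.''
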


\begin{proof}
  For any $e^{0}_{\ell}$ which is not contained in the boundary of 
  some $e^{1}_{j}$ we define $\epsilon(e^{0}_{\ell})=1$. Hence, we assume 
  without loss of generality that the $1$-skeleton of $C$ is pure. 
  Let
  \begin{equation*}
    \bm_{1}(e^{1}_{i})=\sum_{\ell=1}^{k_{0}} a_{i \ell}e^{0}_{\ell}
    \quad\text{for }1\leq i\leq k_{1}.
  \end{equation*}
  Because $\epsilon\circ\bm_{1}=0$ we have to solve the following system of 
  linear equations to define $\epsilon$:
  \begin{equation*}
    \label{eq:system}
    \begin{pmatrix}
      a_{11} & a_{12} & \ldots & a_{1 k_{0}} \\ 
      a_{21} & a_{22} & \ldots & a_{2 k_{0}} \\ 
      \vdots & \vdots & & \vdots \\
      a_{k_{1} 1} & a_{k_{1} 2} & \ldots & a_{k_{1} k_{0}}
    \end{pmatrix}
    \begin{pmatrix}
      \epsilon({e^{0}_{1}})\\ 
      \epsilon({e^{0}_{2}})\\ 
      \vdots \\
      \epsilon({e^{0}_{k_{0}}})\\ 
    \end{pmatrix}
    =0.
  \end{equation*}

  We assume that we get a line with only one entry $a_{ij}\neq 0$, \ie 
  there is an element $e^{0}_{j}$ with $\epsilon(e^{0}_{j})=0$. Getting such a 
  line means that there exists an element $x\in C_{1}$ so that 
  $\anzahl\bd(x)=1$ which is a contradiction!

  Therefore, the system of linear equations has a solution 
  $\bigl(\epsilon(e^{0}_{1}),\ldots,\epsilon(e^{0}_{k_{0}})\bigr)$ with all 
  $\epsilon(e^{0}_{\ell})\neq 0$.
\end{proof}

\section{Acyclic Chain Complexes and Cones}
\label{sec:acyclic}

\subsection{Terms and Definitions}

We define acyclic chain complexes in the same way as acyclic simplicial 
complexes.
\begin{defn}
	A chain complex $(C,\Omega)$ over a principal ring $R$ is 
	\emph{acyclic} if the following holds for the homology groups:
	\begin{center}
		$H_{0}(C)\cong R$, \qquad $H_{\nu}(C)=0$ \quad for $\nu\geq 1$. 
	\end{center}
\end{defn}

Some special simplicial complexes are \emph{cones}: A simplicial cone 
has a distinguished vertex $v_{0}$, and for any maximal simplex $S$ in 
the complex (\ie simplices which are not contained in any other) holds: 
$S$ has exact one facet which does not contain the vertex 
$v_{0}$. For example, simplices themselves are cones. 

To define the concept of a cone for chain complexes, we want to 
abandon the geometrical idea of an apex. 

\begin{defn}
	Let $(C,\Omega)$ be a finite chain complex of order $d$ over a 
	principal ring~$R$. 
	For $0\leq\nu\leq d$ let $\Omega_{\nu}$ be a basis of $C_{\nu}$, 
	$\Omega_{\nu}=\{e^{\nu}_{1},\ldots,e^{\nu}_{k_{\nu}}\}$.
	
	$(C,\Omega)$ is a \emph{cone} if the following conditions hold:
	\begin{enumerate}
		\item  \label{cone1} For every $\nu\in\{1,\ldots,d\}$ there is a nonempty 
		subset $S_{\nu}\subset\Omega_{\nu}$ so that:
		\begin{enumerate}
			\item  \label{cone1a} $\supp(\bm_{\nu} e^{\nu}_{j})\not\subset
			\bigcup\limits_{e^{\nu}_{i}\in S_{\nu}\setminus\{e^{\nu}_{j}\}}
			\supp(\bm_{\nu} e^{\nu}_{i})$
			for every $e^{\nu}_{j}\in S_{\nu}$;
		
			\item \label{cone1b} for every $e^{\nu}_{k}\in \Omega_{\nu}\setminus S_{\nu}$ 
			there is an element $\tau_{k}\in C_{\nu +1}$ so that 
			\begin{equation*}
				\bm_{\nu+1}\tau_{k}=ce^{\nu}_{k} + r
				\qquad\text{with $c$ unit in $R$ and $r\in\erz{S_{\nu}}$}.
			\end{equation*}
		\end{enumerate}
	
		\item  \label{cone2} $\anzahl{\supp(\bm_{1}\sigma)}\geq 2$ for every 
		$\sigma\in C_{1}\setminus\ker\bm_{1}$.
	
		\item  \label{cone3} There is a subset $\{e\}=S_{0}\subset\Omega_{0}$ 
		with $\anzahl{S_{0}}=1$ so that:
		\begin{itemize}
			\item[ ]  For every $e^{0}_{k}\in \Omega_{0}\setminus S_{0}$ 
			there is an element $\tau_{k}\in C_{1}$ so that 
			\begin{equation*}
				\bm_{1}\tau_{k}=ce^{\nu}_{k} + c_{0}e
				\qquad\text{with $c$ unit in $R$ and $c_{0}\neq 0$}.
			\end{equation*}
		\end{itemize}
	\end{enumerate}
\end{defn}

\begin{rem}
	\begin{enumerate}
		\item  $c_{0}\neq 0$ in condition~\ref{cone3} follows from condition~\ref{cone2}.
	
		\item  Recall the set $\Gamma$ from 
		Definition~\ref{defn:critical}. It is always
		$\Gamma\cap\Omega_{\nu}\subset S_{\nu}$. 
		In particular, $\Omega_{d}=S_{d}$. 
	
		\item  $(\ker\bm_{\nu})\cap\erz{S_{\nu}}=\{0\}$
		for all $1\leq\nu\leq d$
		because of condition~\ref{cone1a}.
	\end{enumerate}
\end{rem}

\begin{lemma}
	A cone $(C,\Omega)$ is acyclic.
\end{lemma}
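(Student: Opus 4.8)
The plan is to show that a cone $(C,\Omega)$ has $H_\nu(C) = 0$ for $\nu \geq 1$ and $H_0(C) \cong R$ by exploiting the cone structure level by level, working from the top down. The intuition, borrowed from the simplicial case, is that conditions~\ref{cone1a} and~\ref{cone1b} say each basis element outside the ``apex part'' $S_\nu$ is the boundary of something one dimension up (modulo $\erz{S_\nu}$), and that nothing in $\erz{S_\nu}$ is a cycle (the third remark after the cone definition). Together these should force every cycle to be a boundary.

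First I would treat the top homology $H_d(C)$. Since $\Omega_d = S_d$ by the second remark, condition~\ref{cone1a} with $\nu = d$ says exactly that no nontrivial $R$-combination of the $\bm_d e^d_j$ vanishes on the support level: more precisely, $(\ker \bm_d) \cap \erz{S_d} = \{0\}$, i.e.\ $\ker \bm_d = 0$, so $H_d(C) = 0$. (This is literally the third remark, whose justification I would spell out: if $\sum a_j e^d_j \in \ker\bm_d$ with some $a_{j_0} \neq 0$, pick $j_0$ so that $e^d_{j_0}$ still carries a support element not covered by the others — such a $j_0$ exists by~\ref{cone1a} applied to the minimal-support argument — contradiction.)

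Next, for $1 \leq \nu \leq d-1$, I would show $H_\nu(C) = 0$ by taking a cycle $z \in \ker\bm_\nu \subset C_\nu$ and using~\ref{cone1b} to push it into $\erz{S_\nu}$. Write $z = \sum_{e^\nu_k \in \Omega_\nu \setminus S_\nu} z(e^\nu_k)\, e^\nu_k + (\text{term in }\erz{S_\nu})$. For each $e^\nu_k \notin S_\nu$ there is $\tau_k \in C_{\nu+1}$ with $\bm_{\nu+1}\tau_k = c_k e^\nu_k + r_k$, $c_k$ a unit, $r_k \in \erz{S_\nu}$; subtracting $\sum_k c_k^{-1} z(e^\nu_k)\, \bm_{\nu+1}\tau_k$ from $z$ yields a homologous cycle $z'$ lying in $\erz{S_\nu}$. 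But $z' \in (\ker\bm_\nu) \cap \erz{S_\nu} = \{0\}$ by condition~\ref{cone1a} (third remark again), so $z = \bm_{\nu+1}(\sum_k c_k^{-1} z(e^\nu_k)\tau_k)$ is a boundary. Hence $H_\nu(C) = 0$.

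Finally, for $H_0(C) \cong R$: condition~\ref{cone2} is exactly the hypothesis of Lemma~\ref{lem:epsilon}, so there is an augmentation $\epsilon \colon C_0 \to R$ with $\epsilon(e^0_\ell) \neq 0$ for all $\ell$ and $\epsilon \circ \bm_1 = 0$; in particular $\im\bm_1 \subset \ker\epsilon$ and $\epsilon$ is surjective onto a nonzero ideal of $R$, so $\epsilon(C_0) \cong R$. I then show $\ker\epsilon = \im\bm_1$, which gives $H_0(C) = C_0/\im\bm_1 \cong \epsilon(C_0) \cong R$. The inclusion $\im\bm_1 \subset \ker\epsilon$ is clear; for the reverse, given $w \in \ker\epsilon \subset C_0$, use condition~\ref{cone3}: for each $e^0_k \notin S_0$ there is $\tau_k \in C_1$ with $\bm_1\tau_k = c_k e^0_k + c_{0,k} e$ ($c_k$ a unit, $c_{0,k} \neq 0$), so subtracting suitable multiples of the $\bm_1\tau_k$ from $w$ reduces $w$ modulo $\im\bm_1$ to a multiple $a e$ of the apex vertex; applying $\epsilon$ and using $\epsilon(w) = 0$ forces $a\,\epsilon(e) = 0$, hence $a = 0$ since $R$ is a domain and $\epsilon(e) \neq 0$. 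Thus $w \in \im\bm_1$.

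I expect the main obstacle to be the bookkeeping in the reduction steps of the middle and last parts: one must check that after subtracting the correction terms $c_k^{-1} z(e^\nu_k)\bm_{\nu+1}\tau_k$, the resulting element genuinely lands in $\erz{S_\nu}$ (the $r_k$'s already lie there, and the chosen coefficients kill precisely the $\Omega_\nu \setminus S_\nu$ components), and — more delicately — that the third remark's claim $(\ker\bm_\nu)\cap\erz{S_\nu} = \{0\}$ really does follow from~\ref{cone1a} alone. That last point is where~\ref{cone1a} does all the work and deserves a careful minimal-support argument, exactly parallel to the $n=0$ case of Theorem~\ref{thm:precritical}.
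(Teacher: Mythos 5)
Your proposal is correct, and for the degrees $\nu\geq 1$ it is exactly the paper's argument: $H_{d}(C)=0$ because $S_{d}=\Omega_{d}$ forces $\ker\bm_{d}=0$, and for $1\leq\nu\leq d-1$ one subtracts the correction terms $c_{k}^{-1}z(e^{\nu}_{k})\bm_{\nu+1}\tau_{k}$ supplied by condition~\ref{cone1b} to push a cycle into $\erz{S_{\nu}}$, where condition~\ref{cone1a} (via the support argument you rightly insist on spelling out) kills it. Where you diverge is the computation of $H_{0}(C)$: the paper argues directly, showing that $\im\bm_{1}$ is the free summand $\erz{e^{0}_{i}+d_{i}e^{0}_{1}\mid i\geq 2}$ complementary to $\erz{e^{0}_{1}}$ inside $C_{0}$ (using condition~\ref{cone2} only to rule out $\lambda e^{0}_{1}\in\im\bm_{1}$), whereas you invoke Lemma~\ref{lem:epsilon} to manufacture an augmentation $\epsilon$ with all $\epsilon(e^{0}_{\ell})\neq 0$, prove $\ker\epsilon=\im\bm_{1}$ by the same reduction to a multiple of the apex, and conclude via $C_{0}/\ker\epsilon\cong\im\epsilon\cong R$. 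Both routes are sound; yours is slightly slicker and reuses Lemma~\ref{lem:epsilon} (whose hypothesis is literally cone condition~\ref{cone2}), while the paper's is self-contained and makes the structure of $\im\bm_{1}$ explicit. Two small housekeeping points: you should note the trivial case $d=0$ separately (there condition~\ref{cone3} forces $\Omega_{0}=S_{0}=\{e\}$, and Lemma~\ref{lem:epsilon} as stated requires $d\geq 1$), and in your first step any index $j_{0}$ with $a_{j_{0}}\neq 0$ already works, since condition~\ref{cone1a} provides a private support element for \emph{every} member of $S_{\nu}$ --- no minimality is needed.
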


\begin{proof}
	If $d=0$ (C finite complex of order $0$), then $\Omega_{0}=S_{0}$. 
	So we get $H_{\nu}(C)=0$ for $\nu\geq1$ and $H_{0}(C)\cong R$ 
	because of $\anzahl{S_{0}}=1$. 
	
	\medskip
	Consider now the case $d\geq 1$.	
	At first we show: $\ker\bm_{\nu}=\im\bm_{\nu -1}$ for $\nu\geq 1$.
	If $\nu>d$, there is nothing to do. 
	
	As $S_{d}=\Omega_{d}$ we conclude $\ker\bm_{d}=0$. Therefore, 
	$H_{d}(C)=0$. 
	
	For $1\leq\nu\leq d-1$, consider an element 
	$\sigma\in\ker\bm_{\nu}$: 
	\begin{equation*}
		\sigma = \sum_{i=1}^{k_{\nu}}a_{i}e^{\nu}_{i}
		= \sum_{e^{\nu}_{i}\in S_{\nu}}a_{i}e^{\nu}_{i} 
		+ \sum_{e^{\nu}_{i}\not\in S_{\nu}}a_{i}e^{\nu}_{i}.
	\end{equation*}
	There exists some $\tau_{i}\in C_{\nu +1}$
	for every $e^{\nu}_{i}\not\in S_{\nu}$ so that
	$\bm_{\nu+1}\tau_{i} = c_{i}e^{\nu}_{i} + r_{i}$ 
	with $c_{i}$ unit in $R$ and $r_{i}\in\erz{S_{\nu}}$,
	as postulated. 
	It is $\bm_{\nu+1}\tau_{i}\in\ker\bm_{\nu}$ because
	$\bm_{\nu}\circ\bm_{\nu+1}=0$. Therefore, we get:
	\begin{equation*}
		\underbrace{
			\sigma - \sum_{e^{\nu}_{i}\not\in S_{\nu}} (a_{i}c_{i}^{-1}) 
			\bm_{\nu+1}\tau_{i}
		}_{\in\ker\bm_{\nu}}
		=
		\underbrace{
			\sum_{e^{\nu}_{i}\in S_{\nu}} a_{i}e_{i}^{\nu}
			-
			\sum_{e^{\nu}_{i}\not\in S_{\nu}} a_{i}c_{i}^{-1} r_{i}
		}_{\in\erz{S_{\nu}}}
	\end{equation*}
	As $(\ker\bm_{\nu})\cap\erz{S_{\nu}}=\{0\}$, we conclude 
	$\sigma - \sum_{e^{\nu}_{i}\not\in S_{\nu}} (a_{i}c_{i}^{-1}) 
	\bm_{\nu+1}\tau_{i} = 0$. Hence 
	\begin{equation*}
		\sigma = \sum_{e^{\nu}_{i}\not\in S_{\nu}} (a_{i}c_{i}^{-1}) 
		\bm_{\nu+1}\tau_{i} \in\im\bm_{\nu+1}.
	\end{equation*}
	So $\ker\bm_{\nu}\subset\im\bm_{\nu+1}$. 
	Therefore, $\ker\bm_{\nu}=\im\bm_{\nu+1}$ and $H_{\nu}(C)=0$ for 
	$1\leq\nu\leq d-1$.
	
	\medskip
	Now we have to show $H_{0}(C)\cong R$. We know
	$\ker\bm_{0}= C_{0}=\erz{\Omega_{0}}$ with 
	$\Omega_{0}=\{e^{0}_{1},\ldots,e^{0}_{k_{0}}\}$. 
	
	Recall that $\anzahl{\supp(\bm_{1}\sigma)}\geq 2$ for every 
	$\sigma\in C_{1}\setminus\ker\bm_{1}$. Hence,  
	$\lambda e^{0}_{i}\not\in\im\bm_{1}$ for any $1\leq i\leq k_{0}$ 
	and every $0\neq\lambda\in R$. 
	
	As $\anzahl{S_{0}}=1$ we have $\anzahl{\Omega_{0}}\geq 1$. We look 
	at two cases separately:
	\begin{itemize}
		\item  $\anzahl{\Omega_{0}}=1$, so 
		$\Omega_{0}=\{e^{0}_{1}\}$. It is $\im\bm_{1}=0$ because of
		the cone condition~\ref{cone2}. So we get:
		\begin{equation*}
			H_{0}(C) = \quotient{\ker\bm_{0}}{\im\bm_{1}}
			= \quotient{\erz{e^{0}_{1}}}{0}
			\cong R 
		\end{equation*}
	
		\item  $\anzahl{\Omega_{0}}\geq 2$, so 
		$\Omega_{0}\setminus S_{0}\neq\emptyset$. 
		Without loss of generality, let
		$S_{0}=\{e^{0}_{1}\}$. 
		For every $e^{0}_{i}$ with $2\leq i\leq k_{0}$ there exists some 
		$\tau_{i}\in C_{1}$ so that
		\begin{equation*}
			\bm_{1}\tau_{i} = c_{i}e^{0}_{i} + d_{i}e^{0}_{1}
			\qquad\text{with $c_{i}$ unit in $R$ and $d_{i}\neq 0$.}
		\end{equation*}
 		Without loss of generality we can assume:
		\begin{equation*}
			\bm_{1}\tau_{i} = e^{0}_{i} + d_{i}e^{0}_{1}
			\qquad\text{with $d_{i}\neq 0$.}
		\end{equation*}
		These elements are all independent and in $(\im\bm_{1})$. As 
		$\lambda e^{0}_{1}\not\in\im\bm_{1}$ for every $0\neq\lambda\in 
		R$, we get:
		\begin{align*}
			H_{0}(C) &= \quotient{\ker\bm_{0}}{\im\bm_{1}} \\
			&= \quotient
			{\erz{e^{0}_{1}, e^{0}_{i}+d_{i}e^{0}_{1}\mid 2\leq i\leq k_{0}}}
			{\erz{e^{0}_{i}+d_{i}e^{0}_{1}\mid 2\leq i\leq k_{0}}} \\
			&\cong\erz{e^{0}_{1}}
			\cong R.
			\qedhere
		\end{align*}
	\end{itemize}
\end{proof}

For later purpose we have a look at a special chain complex 
$(C,\Omega)$ of order~$1$. 
Let its chain modules $C_{0}$ be finitely generated and $C_{1}$ generated by one 
element: 
\begin{equation*}
	C_{1}=\erz{e^{1}_{1}}, \qquad 
	C_{0}=\erz{e^{0}_{1},\ldots,e^{0}_{k}}
	\quad\text{with 
	$\bd(e^{1}_{1})=\{e^{0}_{1},\ldots,e^{0}_{k}\}$.} 
\end{equation*}
Therefore, $C=C_{e^{1}_{1}}$. What is known about the cardinality 
of the basis $\Omega_{0}$ if $(C,\Omega)$ is acyclic?

$C_{0}$ is generated by at least one element 
($k\geq 1$) if $C$ is acyclic. The image $\im\bm_{1}$ is a free 
submodule of $C_{0}$ 
generated by $\bm_{1}e^{1}_{1}=\sum_{i=1}^{k}a_{i}e^{0}_{i}$ with 
$a_{i}\neq 0$ for all $i$. 
We distinguish some cases:
\begin{description}
	\item[$k=1$]  $C_{0}=\erz{e^{0}_{1}}$, so 
	$\bm_{1}e^{1}_{1}=a_{1}e^{0}_{1}$ with $a_{1}\neq 0$. It 
	follows:
	\begin{equation*}
		H_{0}(C) = \quotient{C_{0}}{\im\bm_{1}}
		= \quotient{\erz{e^{0}_{1}}}{\erz{a_{1}e^{0}_{1}}}
		\not\cong R
		\quad\text{for $a_{1}\neq 0$. }
	\end{equation*}

	\item[$k\geq 3$]  $C_{0}=\erz{e^{0}_{1},\ldots,e^{0}_{k}}$, 
	$\im\bm_{1}=\erz{\sum_{i=1}^{k}a_{i}e^{0}_{i}}$. 
	
	We assume 
	$H_{0}(C)=\quotient{C_{0}}{\im\bm_{1}}\cong R$. Then any two 
	nonzero elements $[e^{0}_{\ell}]$, $[e^{0}_{j}]$, $\ell\neq j$, of 
	$\quotient{C_{0}}{\im\bm_{1}}$ are not independent, \ie there 
	exist elements $x,\/ y\in R\setminus\{0\}$ so that
	\begin{equation*}
		xe^{0}_{\ell} + ye^{0}_{j}
		= r\sum_{i=1}^{k}a_{i}e^{0}_{i}
		= \sum_{i=1}^{k}(ra_{i})e^{0}_{i}
		\in\im\bm_{1}.
	\end{equation*}
	As $\{e^{0}_{1},\ldots,e^{0}_{k}\}$ is a basis of $C_{0}$ and 
	$a_{i}\neq 0$ for all $i$ we conclude $r=0$. So $xe^{0}_{\ell} + 
	ye^{0}_{j}=0$ {\contradiction } -- a contradiction to the 
	independence of $e^{0}_{\ell}$ and $e^{0}_{j}$ in $C_{0}$. Hence, 
	$H_{0}(C)\not\cong R$.
\end{description}
The case $k=2$ remains. Indeed, it is possible to get 
$H_{0}(C)\cong R$ then. Assume 
$\bm_{1}e^{1}_{1}=a_{1}e^{0}_{1}+a_{2}e^{0}_{2}$ with $a_{2}$ unit 
in $R$. Then, $a_{2}^{-1}a_{1}e^{0}_{1}+e^{0}_{2}\in\im\bm_{1}$, 
and we get:
\begin{equation*}
	H_{0}(C)
	=\quotient{\erz{e^{0}_{1},e^{0}_{2}}}
	{\erz{a_{2}^{-1}a_{1}e^{0}_{1}+e^{0}_{2}}}
	\cong{\erz{e^{0}_{1}}}\cong R.
\end{equation*}

But it is not necessary that $a_{1}$ or $a_{2}$ in 
$\bm_{1}e^{1}_{1}=a_{1}e^{0}_{1}+a_{2}e^{0}_{2}$ is a unit. 
Take $R=\Z$ and $\bm_{1}e^{1}_{1}=2e^{0}_{1}+3e^{0}_{2}$. Then:
\begin{equation*}
	H_{0}(C) 
	= \quotient{\erz{e^{0}_{1}, e^{0}_{2}}}
	{\erz{2e^{0}_{1}+3e^{0}_{2}}}
	\cong \Z
\end{equation*}
as this factor module is generated by $e^{0}_{1}+e^{0}_{2}$. 

We summarize:
\begin{lemma}
	\label{lem:anzahlcnull}
	Let $(C,\Omega)$ be a pure chain complex of order $1$ over a 
	principal ring~$R$. 
	Let its chain modules $C_{0}$ be finitely generated and $C_{1}$ 
	generated by one element: 
	\begin{equation*}
		C_{1}=\erz{e^{1}_{1}}, \qquad 
		C_{0}=\erz{e^{0}_{1},\ldots,e^{0}_{k}}
		\quad\text{with 
		$\bd(e^{1}_{1})=\{e^{0}_{1},\ldots,e^{0}_{k}\}$.} 
	\end{equation*}
	If $(C,\Omega)$ is acyclic, then $C_{0}$ is generated by two 
	elements (so $k=2$).
\end{lemma}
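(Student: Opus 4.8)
The plan is to rule out $k=1$ and $k\geq 3$ by showing that in these cases $H_0(C)\not\cong R$, which contradicts acyclicity; the case $k=2$ then remains as the only possibility. Since $C=C_{e^1_1}$ with $\bd(e^1_1)=\{e^0_1,\dots,e^0_k\}$, we have $\ker\bm_0=C_0=\erz{e^0_1,\dots,e^0_k}$ and $\im\bm_1=\erz{\bm_1 e^1_1}$, where $\bm_1 e^1_1=\sum_{i=1}^k a_i e^0_i$ with \emph{all} $a_i\neq 0$ (purity of $C$ forces every $e^0_i$ into the boundary). Also $k\geq 1$, since acyclicity requires $H_0(C)\cong R\neq 0$, hence $C_0\neq 0$. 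So throughout we compute $H_0(C)=\quotient{C_0}{\im\bm_1}$.

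First I would dispose of $k=1$: here $H_0(C)=\quotient{\erz{e^0_1}}{\erz{a_1 e^0_1}}$ with $a_1\neq 0$, which is never isomorphic to $R$ (it is a proper quotient of $R$ by a nonzero ideal, so it has fewer than maximal degree of freedom, or one invokes $\degf$ directly via Theorem~\ref{thm:degree}). Next, the case $k\geq 3$. Suppose for contradiction $H_0(C)\cong R$. Then any two distinct classes $[e^0_\ell],[e^0_j]$ in $\quotient{C_0}{\im\bm_1}$ must be $R$-linearly dependent, so there are $x,y\in R$, not both zero, with $x e^0_\ell+y e^0_j\in\im\bm_1=\erz{\sum_i a_i e^0_i}$, i.e. $x e^0_\ell+y e^0_j=r\sum_i a_i e^0_i$ for some $r\in R$. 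Comparing coefficients in the basis $\{e^0_1,\dots,e^0_k\}$ of $C_0$ and using that there is a third index $t\notin\{\ell,j\}$ with $a_t\neq 0$, the $e^0_t$-coefficient forces $ra_t=0$, hence $r=0$ (integral domain), hence $x e^0_\ell+y e^0_j=0$, hence $x=y=0$ — contradiction. Therefore $H_0(C)\not\cong R$ when $k\geq 3$.

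Having excluded $k=1$ and $k\geq 3$, the only remaining possibility consistent with acyclicity is $k=2$, which is the claim. I would close by noting (as the discussion preceding the lemma already illustrates with $R=\Z$ and $\bm_1 e^1_1=2e^0_1+3e^0_2$) that $k=2$ genuinely can occur, so the bound is sharp, but that is not needed for the statement.

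The only mildly delicate point is the $k\geq 3$ argument: one must be careful that "$H_0\cong R$ implies any two classes are dependent" is exactly the statement that $\degf_R H_0(C)\leq 1$, which is immediate from the definition of $\degf$ since $R^n$ has $n$ independent elements and $R^1$ has at most one; and one must make sure to use a \emph{third} nonzero coefficient $a_t$ (available precisely because $k\geq 3$ and all $a_i\neq 0$) to kill $r$. Everything else is a routine coefficient comparison in a free module over an integral domain.
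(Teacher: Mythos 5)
Your proof is correct and takes essentially the same route as the paper: the lemma is established there by exactly this case analysis on $k$, computing $H_{0}(C)=\quotient{C_{0}}{\im\bm_{1}}$ directly, disposing of $k=1$ as a proper quotient of $R$ and of $k\geq 3$ by the coefficient comparison that forces $r=0$ and hence contradicts the independence of $e^{0}_{\ell}$ and $e^{0}_{j}$. Your explicit appeal to a third index $t\notin\{\ell,j\}$ with $a_{t}\neq 0$ makes precise a step the paper leaves implicit, but the argument is the same.
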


The converse is not true. 
If $C_{0}$ is generated by two elements, then $C$ is
not necessarily acyclic. We take $R=\Z$ and 
$\bm_{1}e^{1}_{1}=2e^{0}_{1}+2e^{0}_{2}$ and get:
\begin{equation*}
	H_{0}(C) 
	= \quotient{\erz{e^{0}_{1}, e^{0}_{2}}}
	{\erz{2e^{0}_{1}+2e^{0}_{2}}}
	\not\cong \Z
\end{equation*}
as the factor module is not torsion free: 
$2[e^{0}_{1}+e^{0}_{2}]=[0|$. 

\medskip
We present a few examples of cones and acyclic chain complexes.
\begin{exs}
\label{exs:acyclexam}
  \begin{enumerate}
	\item  Consider a simplicial complex $D$ which is a cone in the common sense. 
	Then $D$ is also a cone according to our definition: 
	If the distinguished vertex of $D$ is $v_{0}$, 
	then we choose these basis elements for $S_{\nu}$ which 
	correspond to $\nu$-dimensional simplices containing the vertex 
	$v_{0}$. In particular, $S_{0}=\{v_{0}\}$. 

	There is no need to set $S_{\nu}$ in this way, as the following
	example shows.

	\smallskip
	\item  Consider a finite chain complex $(C,\Omega)$ of order $2$ 
	over $\Z$ 
	whose chain modules have the bases
	$\Omega_{2}=\{e^{2}_{1}\}$, 
	$\Omega_{1}=\{e^{1}_{1},e^{1}_{2},e^{1}_{3}\}$ and
	$\Omega_{0}=\{e^{0}_{1},e^{0}_{2},e^{0}_{3}\}$.
	Let there be the following boundary mappings:
	\begin{align*}
		\bm_{2}(e^{2}_{1}) & = e^{1}_{1}-e^{1}_{2}+e^{1}_{3}, 
		&\bm_{1}(e^{1}_{1}) & = e^{0}_{3}-e^{0}_{2},  \\
		&
		&\bm_{1}(e^{1}_{2}) & = e^{0}_{3}-e^{0}_{1}, \\
		&
		&\bm_{1}(e^{1}_{3}) & = e^{0}_{2}-e^{0}_{1}.
	\end{align*}
	We choose
	$S_{2}=\Omega_{2}=\{e^{2}_{1}\}$, 
	$S_{1}=\{e^{1}_{1},e^{1}_{2}\}$ and
	$S_{0}=\{e^{0}_{1}\}$.
	Then all cone conditions are fulfilled but 
	$e^{0}_{1}\not\in C_{e^{1}_{1}}$. 

	\smallskip
	\item  \label{exacyc3} Let $(C,\Omega)$ be a finite chain complex of order $2$ 
	over $\Z$ with bases 
	$\Omega_{2}=\{e^{2}_{1}\}$, 
	$\Omega_{1}=\{e^{1}_{1},e^{1}_{2}\}$ and
	$\Omega_{0}=\{e^{0}_{1},e^{0}_{2}\}$ so that:
	\begin{align*}
		\bm_{2}(e^{2}_{1}) & = e^{1}_{1}+e^{1}_{2}, 
		& \bm_{1}(e^{1}_{1}) & = e^{0}_{2}-e^{0}_{1},  \\
		&
		&\bm_{1}(e^{1}_{2}) & = e^{0}_{1}-e^{0}_{2}.
	\end{align*}
	The choice of 
	$S_{2}=\Omega_{2}=\{e^{2}_{1}\}$, 
	$S_{1}=\{e^{1}_{1}\}$ and
	$S_{0}=\{e^{0}_{1}\}$
	makes $(C,\Omega)$ a cone. 
	Notice that this chain complex does not come from a simplicial 
	complex! 

	\smallskip
	\item  \label{exacyc4} Again, we consider a finite chain complex $(C,\Omega)$ of 
	order $2$ over $\Z$. Let 
	$\Omega_{2}=\{e^{2}_{1},e^{2}_{2},e^{2}_{3}\}$, 
	$\Omega_{1}=\{e^{1}_{1},e^{1}_{2},e^{1}_{3},e^{1}_{4}\}$,
	$\Omega_{0}=\{e^{0}_{1},e^{0}_{2}\}$ and: 
	\begin{align*}
		\bm_{2}(e^{2}_{1}) & = e^{1}_{1}+e^{1}_{2},
		&\bm_{1}(e^{1}_{1}) & = e^{0}_{2}-e^{0}_{1},  \\
		\bm_{2}(e^{2}_{2}) & = e^{1}_{2}+e^{1}_{3}, 
		&\bm_{1}(e^{1}_{2}) & = e^{0}_{1}-e^{0}_{2},  \\
		\bm_{2}(e^{2}_{3}) & = e^{1}_{3}+e^{1}_{4},
		&\bm_{1}(e^{1}_{3}) & = e^{0}_{2}-e^{0}_{1}, \\
		&
		&\bm_{1}(e^{1}_{4}) & = e^{0}_{1}-e^{0}_{2}.
	\end{align*}
	We choose
	$S_{2}=\Omega_{2}=\{e^{2}_{1},e^{2}_{2},e^{2}_{3}\}$, 
	$S_{1}=\{e^{1}_{2}\}$ and
	$S_{0}=\{e^{0}_{1}\}$. 
	Because $\bm_{2}(e^{2}_{2}-e^{2}_{3})=e^{1}_{2}-e^{1}_{4}$ all 
	conditions for a cone are satisfied. 

	\smallskip
	\item  Our last example is also a finite chain complex $(C,\Omega)$ of 
	order $2$ over $\Z$. Let
	$\Omega_{2}=\{e^{2}_{1}\}$, 
	$\Omega_{1}=\{e^{1}_{1},e^{1}_{2},e^{1}_{3},e^{1}_{4}\}$,
	$\Omega_{0}=\{e^{0}_{1},e^{0}_{2},e^{0}_{3},e^{0}_{4}\}$  so that
	\begin{align*}
		\bm_{2}(e^{2}_{1}) & = e^{1}_{1}+e^{1}_{2}+e^{1}_{3}+e^{1}_{4}, 
		&\bm_{1}(e^{1}_{1}) & = e^{0}_{2}-e^{0}_{1},  \\
		&
		&\bm_{1}(e^{1}_{2}) & = e^{0}_{3}-e^{0}_{2},  \\
		&
		&\bm_{1}(e^{1}_{3}) & = e^{0}_{4}-e^{0}_{3},  \\
		&
		&\bm_{1}(e^{1}_{4}) & = e^{0}_{1}-e^{0}_{4}.
	\end{align*}
	The homology of $C$ is
	\begin{equation*}
		H_{2}(C)=0, \quad
		H_{1}(C)=0, \quad
		H_{0}(C)\cong\Z,
	\end{equation*}
	hence $C$ is acyclic. But $C$ is not a cone, which we will see as 
	follows:
	
	By definition holds:
		$\bd(e^{1}_{1})\cup\bd(e^{1}_{3})
		=\Omega_{0}
		=\bd(e^{1}_{2})\cup\bd(e^{1}_{4})$,
	and due to the cone condition~\ref{cone1a} we conclude
	$\anzahl S_{1}\leq 2$, hence 
	$\anzahl(\Omega_{1}\setminus S_{1})\geq 2$. 
	Because $\bd(e^{2}_{1})=\Omega_{1}$ it follows 
	$\anzahl{\bigl(\bd(e^{2}_{1})\cap(\Omega_{1}\setminus S_{1})\bigr)}\geq 2$, 
	and this is a contradiction to the cone condition~\ref{cone1b}.
\end{enumerate}
\end{exs}

\section{Shellable Chain Complexes}
\label{sec:shell}
In \citet[page~205]{BjornerLas-Vergnas.1999}, shellability is defined for 
\emph{regular cell complexes} which are more general than simplicial 
complexes. In a similar way, we define shellability of chain 
complexes. 

\subsection{Definition and First Examples}
\label{ssec:defshell}
Let $(C,\Omega)$ be a chain complex with basis $\Omega$. We define a 
mapping:
\begin{equation*}
	s\colon \Omega \rightarrow \Z;\qquad e^{\nu}_{i}\mapsto \nu = 
	\text{order of the complex $C_{e^{\nu}_{i}}$.}
\end{equation*}

\begin{defn}
	Let $(C,\Omega)$ be a finite chain complex of order $d$ over a 
	principal ring~$R$. Let
	$\Gamma := 
	\set{e\in\Omega}{e\not\in\bd(f) \text{ for all }f\in\Omega}
	\not=\emptyset$
	be the set of all maximal basis elements of $C$.
	An order of the basis elements in $\Gamma:=\{g_{1},\ldots,g_{k}\}$ 
	is a \emph{shelling} (or a \emph{shelling order}) if $d=0$ or if 
	the following holds for $d\geq 1$:
	\begin{enumerate}
		\item \label{shell1}  For $2\leq j\leq k$, the set 
		\begin{equation*}
			\Omega_{g_{j}}\cap 
			\Bigl(\bigcup_{i=1}^{j-1}\Omega_{g_{i}}\Bigr)
		\end{equation*}
		generates a pure chain complex of order $s(g_{j})-1$. 
	
		\item \label{shell2}  For $2\leq j\leq k$, the set 
		$(\Omega_{g_{j}})_{s(g_{j})-1}$ has a shelling in which the 
		basis elements of 
		$\Bigl(\Omega_{g_{j}}\cap
		(\bigcup_{i=1}^{j-1}\Omega_{g_{i}})\Bigr)_{s(g_{j})-1}$
		come first. 
	
		\item \label{shell3}  $(\Omega_{g_{1}})_{s(g_{1})-1}$ has a shelling.
	\end{enumerate}
	Then, the chain complex $(C,\Omega)$ is \emph{shellable}. 
\end{defn}

\begin{rem}
	\begin{enumerate}
		\item It must be $s(g_{1})=d$, otherwise it would be impossible to
		get a shelling because of condition~\ref{shell1}. So we can rewrite
		condition~\ref{shell3} as follows: $(\Omega_{g_{1}})_{d-1}$ has a shelling.
	
		\item  The definition of shellability for simplicial complexes 
		contains only condition~\ref{shell1} \citep[cf.][ch.~12]{Kozlov.2008a}. 
		As the boundary of a simplex is 
		always shellable, the conditions~\ref{shell2} and \ref{shell3} are trivially 
		satisfied. Hence, the definition above contains shellability of 
		simplicial complexes.
	
		\item  If $\Gamma=\{g_{1}\}$, only the third condition is 
		relevant. 
	
		\item  If $(C,\Omega)$ is a shellable chain complex of order 
		$d$, the chain modules of $C$ are $C_{\nu}\neq 0$
		for $0\leq\nu\leq d$. 
	
		\item  In a shellable chain complex $(C,\Omega)$, each 
		subcomplex $C_{e^{\nu}_{i}}$ is shellable, due to the 
		conditions~\ref{shell2} and \ref{shell3}. 
	
		\item  It follows from the conditions~\ref{shell2} and \ref{shell3} as well that the 
		$\bigl(s(g_{j})-1\bigr)$-skeleton of $C_{g_{j}}$ is shellable 
		for $1\leq j\leq k$. 
	
		\item  For a precritical element $g_{j}\in\Gamma$ it is
		\begin{equation*}
			\Omega_{g_{j}}\cap
			\Bigl(\bigcup_{i=1}^{j-1}\Omega_{g_{i}}\Bigr)
			=\Omega_{g_{j}}\setminus \{g_{j}\}. 
		\end{equation*}
		Therefore, it is possible to rearrange the elements in a 
		shelling of $\Gamma$ so that all precritical elements come at 
		last. 
	\end{enumerate}
\end{rem}

In opposite to shellable simplicial complexes the homology of 
shellable chain complexes is not clear. Consider the following 
examples:
\begin{exs}
\label{exs:shellexam}
\begin{enumerate}
	\item  \label{exshell1} Let $(C,\Omega)$ be a chain complex of order $1$ over $\Z$ 
		so that $C_{1}=\erz{e^{1}_{1}}$,  
		$C_{0}=\erz{e^{0}_{1},\ldots,e^{0}_{k}}$
		for some $k\geq 1$ and $\bm_{1}e^{1}_{1}=\sum_{i=1}^{k}e^{0}_{i}$. 
		This complex is shellable and its homology is:
		\begin{equation*}
			H_{1}(C)=0,\qquad H_{0}(C)\cong \Z^{k-1}.
		\end{equation*}
		If $k=2$, this chain complex is acyclic and even a cone. 

	\smallskip
	\item  \label{exshell2} Let $(C,\Omega)$ be a chain complex of order $1$ over $\Z$ 
		so that $C_{1}=\erz{e^{1}_{1}, e^{1}_{2}}$ and  
		$C_{0}=\erz{e^{0}_{1},\ldots,e^{0}_{k}}$
		for some $k\geq 1$. We assume: 
		$\bm_{1}e^{1}_{1}=\bm_{1}e^{1}_{2}=\sum_{i=1}^{k}e^{0}_{i}$. 
		This complex is shellable and its homology is:
		\begin{equation*}
			H_{1}(C)\cong \Z,\qquad H_{0}(C)\cong \Z^{k-1}.
		\end{equation*}
		
	\smallskip
	\item  \label{exshell3} Let $(C,\Omega)$ be a chain complex of order $1$ over $\Z$ 
		so that $C_{1}=\erz{e^{1}_{1}, e^{1}_{2}}$ and  
		$C_{0}=\erz{e^{0}_{1},\ldots,e^{0}_{k}}$
		for some $k\geq 2$. We assume: 
		$\bm_{1}e^{1}_{1}=\sum_{i=1}^{k}e^{0}_{i}$ and
		$\bm_{1}e^{1}_{2}=-e^{0}_{1}+\sum_{i=2}^{k}e^{0}_{i}$.
		This complex is shellable. About the homology we know:
		\begin{equation*}
			H_{1}(C)=0,\qquad H_{0}(C)\not\cong\Z^{i}\quad\text{for any 
			$i\in\N$}
		\end{equation*}
		as there are torsion elements in $H_{0}(C)$, for example $e^{0}_{1}$. 
\end{enumerate}
\end{exs}

So we need more conditions on shellable chain complexes to get some 
information about homology. We will treat this later 
in Section~\ref{sec:regular}.

\subsection{Monotonically Descending Shellings}
\label{ssec:mondescendshell}

Because of the first shelling condition the set
$\bigl(\Omega_{g_{j}}\cap 
(\bigcup_{i=1}^{j-1}\Omega_{g_{i}})\bigr)$ 
generates a pure chain complex of order $(s(g_{j})-1)$
for $2\geq j\geq k$.
We ask: Is it always possible to get a shelling of $\Gamma$ so that
$s(g_{i})\geq s(g_{i+1})$ for all $i$? Indeed, this is true, as we 
will show in this section. 

\begin{defn}
	\label{defn:mondescshell}
	Let $(C, \Omega)$ be a shellable chain complex over~$R$, 
	finite of order~$d$, and 
	$\Gamma\subset\Omega$ be the subset of all maximal basis elements. 
	A shelling of 
	$\Gamma:=\{g_{1},\ldots,g_{k}\}$ is \emph{monotonically descending} if 
	$s(g_{i})\geq s(g_{i+1})$ for all $1\leq i\leq k-1$. 
	
	A \emph{failure} in a shelling of $\Gamma=\{g_{1},\ldots,g_{k}\}$ 
	is a pair $(i,j)$ with $i<j$ and $s(g_{i})<s(g_{j})$. 
\end{defn}

Therefore, a monotonically descending shelling is a shelling without 
failures. 

\begin{rem}
	If $(C,\Omega)$ is a shellable chain complex and 
	\emph{pure}, then every shelling of $\Gamma$ is monotonically descending.
\end{rem}

At first, we will prove the following lemma:
\begin{lemma}
	\label{lem:mondescshell}
	Let $(C,\Omega)$ and $\Gamma$ be as in 
	Definition~\ref{defn:mondescshell}. Let there be $m\geq 1$ 
	failures in a shelling of $\Gamma$. Then it is possible to 
	permute the elements of $\Gamma$ so that there is a new shelling 
	of $\Gamma$ with $(m-1)$ failures. 
\end{lemma}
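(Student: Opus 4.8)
I would argue by "pushing a failure to the right." Suppose a shelling $\Gamma=\{g_1,\dots,g_k\}$ has $m\geq 1$ failures. Among all failures $(i,j)$ choose one with $j-i$ minimal; then in particular $s(g_i)<s(g_{i+1})$, \ie the pair $(i,i+1)$ is itself a failure (otherwise a failure $(i,j)$ with $i<j$ and $s(g_i)<s(g_j)$ but $s(g_i)\geq s(g_{i+1})$ would force $s(g_{i+1})<s(g_j)$, a failure $(i+1,j)$ with smaller gap, unless $j=i+1$; a short case analysis closes this). So it suffices to show: if $s(g_i)<s(g_{i+1})$ for some $i$, then swapping $g_i$ and $g_{i+1}$ again yields a shelling, and this swap strictly decreases the number of failures.

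The main work is to check that the transposed order is still a shelling. Write $\Gamma'=\{g_1,\dots,g_{i-1},g_{i+1},g_i,g_{i+2},\dots,g_k\}$. For every index outside $\{i,i+1\}$ the set $\bigcup_{\ell<\text{(position)}}\Omega_{g_\ell}$ is unchanged, so conditions~\ref{shell1}--\ref{shell3} at those positions are untouched; likewise condition~\ref{shell3} concerns only $g_1$ (which moves only if $i=1$, and then $s(g_2)>s(g_1)$ contradicts the remark that $s(g_1)=d$ is maximal, so in fact $i\geq 1$ can be handled or this case excluded). The two positions to re-examine are the new position of $g_{i+1}$ (now $i$-th) and of $g_i$ (now $(i+1)$-st). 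For $g_{i+1}$ in position $i$ we must see that $\Omega_{g_{i+1}}\cap\bigl(\bigcup_{\ell=1}^{i-1}\Omega_{g_\ell}\bigr)$ generates a pure complex of order $s(g_{i+1})-1$ with the skeleton-shelling of condition~\ref{shell2}: the key point is that $s(g_i)<s(g_{i+1})$ forces $g_i\notin\Omega_{g_{i+1}}$ (a basis element of the subcomplex $C_{g_{i+1}}$ lives in degrees $\leq s(g_{i+1})$, and $g_i$, being maximal of strictly smaller order, is not in the boundary-closure of $g_{i+1}$), hence $\Omega_{g_{i+1}}\cap\Omega_{g_i}\subset\Omega_{g_{i+1}}$ contributes nothing new and $\Omega_{g_{i+1}}\cap\bigl(\bigcup_{\ell\leq i-1}\Omega_{g_\ell}\bigr)=\Omega_{g_{i+1}}\cap\bigl(\bigcup_{\ell\leq i}\Omega_{g_\ell}\bigr)$, which was exactly the set controlled by the original shelling at the old position $i+1$ of $g_{i+1}$. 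So conditions~\ref{shell1},\ref{shell2} at the new position $i$ hold verbatim. For $g_i$ now in position $i+1$, the relevant set is $\Omega_{g_i}\cap\bigl(\bigcup_{\ell\leq i-1}\Omega_{g_\ell}\cup\Omega_{g_{i+1}}\bigr)=\Omega_{g_i}\cap\bigl(\bigcup_{\ell\leq i-1}\Omega_{g_\ell}\bigr)$, again using $\Omega_{g_i}\cap\Omega_{g_{i+1}}\subset$ the part of $\Omega_{g_{i+1}}$ in degrees $\leq s(g_i)<s(g_{i+1})$, which is contained in $\Omega_{g_{i+1}}$'s skeleton and — by condition~\ref{shell1} applied at the old position of $g_{i+1}$, together with the remark that $C_{e^\lambda_h}\subset\langle\Omega_{g_{i+1}}\cap\bigcup_{\ell\leq i}\Omega_{g_\ell}\rangle$ for common lower-dimensional elements — is already absorbed into $\bigcup_{\ell\leq i-1}\Omega_{g_\ell}$. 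Hence the set controlling $g_i$ at position $i+1$ equals the one that controlled it at its old position $i$, and conditions~\ref{shell1},\ref{shell2} transfer.

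Once the swap preserves shellability, I count failures. A transposition of adjacent entries $g_i,g_{i+1}$ with $s(g_i)<s(g_{i+1})$ changes the failure status only of the pair occupying those two slots: before, $(i,i+1)$ is a failure; after, the pair $(i,i+1)$ (now holding $g_{i+1},g_i$ with $s(g_{i+1})>s(g_i)$) is not a failure. For any third index $\ell$, the pair $\{\ell,i\}$ and $\{\ell,i+1\}$ together contribute the same number of failures before and after, since $\{s(g_i),s(g_{i+1})\}$ as an unordered pair is unchanged and $s(g_\ell)$'s comparison with each is just reassigned between the two slots. Therefore the total drops by exactly $1$, from $m$ to $m-1$.

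\textbf{Expected main obstacle.} The delicate step is verifying that $\Omega_{g_i}\cap\Omega_{g_{i+1}}$ is genuinely harmless — \ie that nothing in the intersection of the two subcomplexes fails to already be covered by $\bigcup_{\ell\leq i-1}\Omega_{g_\ell}$ after the first shelling condition did its job at the original step $i+1$. This is where one must lean on the remark following the definition of $C_{e^\mu_j}$ (that a shared lower-dimensional basis element pulls in its whole subcomplex) and on purity of $C_{g_{i+1}}$, and also make sure the induced shelling of the skeleton in condition~\ref{shell2} is literally the same list, not merely an abstractly-existing one. Everything else is bookkeeping on indices.
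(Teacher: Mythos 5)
Your proposal is correct and follows essentially the same route as the paper: locate an adjacent inversion $s(g_{i})<s(g_{i+1})$ (necessarily with $i\geq 2$), transpose, verify that the controlling intersections at the two affected positions are unchanged because $\Omega_{g_{i}}\cap\Omega_{g_{i+1}}\subset\bigcup_{\ell\leq i-1}\Omega_{g_{\ell}}$, and check that exactly one failure disappears (your counting of the third-index pairs is actually more explicit than the paper's). The one weak spot is your inline justification of that containment --- ``$g_{i}\notin\Omega_{g_{i+1}}$, hence nothing new'' does not follow, since lower-dimensional elements of $\Omega_{g_{i}}$ could a priori lie in $\Omega_{g_{i+1}}$ without lying in the earlier union --- but you correctly identify the real argument in your closing paragraph, and it is exactly the paper's: by the first shelling condition the intersection complex at the old position of $g_{i+1}$ is pure of order $s(g_{i+1})-1$, so any maximal element of $\Omega_{g_{i}}\cap\Omega_{g_{i+1}}$, having order at most $s(g_{i})-1\leq s(g_{i+1})-2$, must lie in the boundary of some element of $\bigl(\bigcup_{\ell\leq i-1}\Omega_{g_{\ell}}\bigr)\cap\Omega_{g_{i+1}}$, and the remark on shared basis elements then absorbs its whole subcomplex.
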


\begin{proof}
	Let $\Gamma:=\{g_{1},\ldots,g_{k}\}$, ordered in a shelling. 
	As $(C,\Omega)$ is shellable 
	we know $s(g_{1})\geq s(g_{i})$ for any $2\leq i\leq k$. 
	
	There is a minimal $2\leq i_{0}\leq k-1$ so that 
	$s(g_{i_{0}})<s(g_{i_{0}+1})$. 
	We want to show that we still have a shelling after permuting
	$g_{i_{0}}$ and $g_{i_{0}+1}$, \ie the ordered set 
	$\{g_{1},\ldots,g_{i_{0}-1},g_{i_{0}+1},g_{i_{0}},\ldots,g_{k}\}$  
	is also a shelling. 
	
	\medskip
	At first, we consider the chain complex 
	generated by:
	\begin{equation*}
		\Delta =
		\Bigl(\bigcup_{i=1}^{i_{0}}\Omega_{g_{i}}\Bigr)\cap 
		\Omega_{g_{i_{0}+1}}.
	\end{equation*}
	About this complex we know:
	\begin{itemize}
		\item  It is a pure chain complex of order 
		$(s(g_{i_{0}+1})-1)$. Hence, all maximal basis elements in $\Delta$ are in 
		$\bigl(\Omega_{g_{i_{0}+1}}\bigr)_{s(g_{i_{0}+1})-1}$. 
	
		\item  $\bigl(\Omega_{g_{i_{0}+1}}\bigr)_{s(g_{i_{0}+1})-1}$ 
		has a shelling in which the basis elements from $\Delta$
		come first. 
	\end{itemize}
	
	We divide the intersection into two parts:
	\begin{equation*}
		\Delta =
		\Biggl(
		\Bigl(\bigcup_{i=1}^{i_{0}-1}\Omega_{g_{i}}\Bigr)\cap 
		\Omega_{g_{i_{0}+1}}
		\Biggr)
		\cup
		\Bigl(
		\Omega_{g_{i_{0}}}\cap\Omega_{g_{i_{0}+1}}
		\Bigr).
	\end{equation*}	

	Because $s(g_{i_{0}})<s(g_{i_{0}+1})$, the set 
	$\Omega_{g_{i_{0}}}\cap \Omega_{g_{i_{0}+1}}$ 
	generates a chain complex of order 
	$t\leq s(g_{i_{0}})-1\leq s(g_{i_{0}+1})-2$.
	Therefore, any maximal basis element~$e$ in 
	$\Omega_{g_{i_{0}}}\cap \Omega_{g_{i_{0}+1}}$ 
	is contained in the boundary of some other basis element~$f\in\Delta$, 
	otherwise $\Delta$ would not generate a pure chain complex. 
	Because $e$ is maximal in 
	$\Omega_{g_{i_{0}}}\cap \Omega_{g_{i_{0}+1}}$ we conclude: 
	$f\not\in\Omega_{g_{i_{0}}}\cap \Omega_{g_{i_{0}+1}}$. 
	Hence, 
	$f\in\Bigl(\bigcup_{i=1}^{i_{0}-1}\Omega_{g_{i}}\Bigr)\cap 
	\Omega_{g_{i_{0}+1}}$. Therefore we conclude:
	\begin{equation*}
		\Omega_{g_{i_{0}}}\cap\Omega_{g_{i_{0}+1}}
		\subset
		\Bigl(\bigcup_{i=1}^{i_{0}-1}\Omega_{g_{i}}\Bigr)\cap 
		\Omega_{g_{i_{0}+1}},
		\quad\text{hence }
		\Delta =
		\Bigl(\bigcup_{i=1}^{i_{0}-1}\Omega_{g_{i}}\Bigr)\cap 
		\Omega_{g_{i_{0}+1}}.	
	\end{equation*}	
	
	Hence, the chain complex generated by 
	$\Bigl(\bigcup_{i=1}^{i_{0}-1}\Omega_{g_{i}}\Bigr)\cap 
	\Omega_{g_{i_{0}+1}}$ is pure of order $s(g_{i_{0}+1})-1$ and 
	satisfies all other shelling properties, too.
	
	\medskip
	We consider now the chain complex with basis
	\begin{equation*}
		\Lambda =
		\Biggl(
		\Bigl(\bigcup_{i=1}^{i_{0}-1}\Omega_{g_{i}}\Bigr)
		\cup\Omega_{g_{i_{0}+1}}
		\Biggr)
		\cap \Omega_{g_{i_{0}}}
		= 
		\Biggl(
		\Bigl(\bigcup_{i=1}^{i_{0}-1}\Omega_{g_{i}}\Bigr)
		\cap \Omega_{g_{i_{0}}}
		\Biggr)
		\cup
		\Bigl(
		\Omega_{g_{i_{0}+1}}\cap\Omega_{g_{i_{0}}}
		\Bigr).
	\end{equation*}
	Above we have shown 
	$\Omega_{g_{i_{0}}}\cap\Omega_{g_{i_{0}+1}}
	\subset
	\Bigl(\bigcup_{i=1}^{i_{0}-1}\Omega_{g_{i}}\Bigr)\cap 
	\Omega_{g_{i_{0}+1}}
	\subset
	\Bigl(\bigcup_{i=1}^{i_{0}-1}\Omega_{g_{i}}\Bigr)$. 
	As $\Omega_{g_{i_{0}}}\cap\Omega_{g_{i_{0}+1}}
	\subset\Omega_{g_{i_{0}}}$ we conclude:
	\begin{equation*}
		\Omega_{g_{i_{0}}}\cap\Omega_{g_{i_{0}+1}}
		\subset
		\Bigl(\bigcup_{i=1}^{i_{0}-1}\Omega_{g_{i}}\Bigr)\cap 
		\Omega_{g_{i_{0}}},
		\quad\text{hence }
		\Lambda =
		\Bigl(\bigcup_{i=1}^{i_{0}-1}\Omega_{g_{i}}\Bigr)\cap 
		\Omega_{g_{i_{0}}}.	
	\end{equation*}	
	Because the $g_{i}$ are ordered in a shelling, $\Lambda$ generates
	a pure chain complex of order $(s(g_{i_{0}})-1)$ which also satisfies all
	other shelling properties.
	
	Therefore, 
	$\{g_{1},\ldots,g_{i_{0}-1},g_{i_{0}+1},g_{i_{0}},\ldots,g_{k}\}$  
	is a shelling order with exactly one failure less. 
\end{proof}

By repeated application of this lemma we get:
\begin{thm}
	Let $(C,\Omega)$ be a shellable chain complex over~$R$, 
	finite of order~$d$, and 
	$\Gamma\subset\Omega$ be the subset of all maximal basis elements.
	Then a monotonically descending 
	shelling of $\Gamma=\{g_{1},\ldots,g_{k}\}$ exists.  
\end{thm}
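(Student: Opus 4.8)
The plan is to obtain the theorem as an immediate consequence of Lemma~\ref{lem:mondescshell} via a straightforward induction on the number of failures. Given the shellable chain complex $(C,\Omega)$ finite of order $d$ with $\Gamma=\{g_{1},\ldots,g_{k}\}$, start from any shelling of $\Gamma$ (one exists by the definition of shellability). Let $m$ be the number of failures in this shelling, as defined in Definition~\ref{defn:mondescshell}. If $m=0$ we are done, since a shelling with no failures is by definition monotonically descending. If $m\geq 1$, Lemma~\ref{lem:mondescshell} produces a permutation of $\Gamma$ which is again a shelling but has only $m-1$ failures.

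First I would set up the induction formally: by descending recursion on the failure count, after at most $m$ applications of Lemma~\ref{lem:mondescshell} we reach a shelling of $\Gamma$ with $0$ failures, which is the desired monotonically descending shelling. One should note that $m$ is finite from the outset, since $\Gamma$ is finite (as $(C,\Omega)$ is finite of order $d$), so the number of pairs $(i,j)$ with $i<j$ is bounded by $\binom{k}{2}$. Hence the recursion terminates.

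The one point that deserves a sentence of care is that the hypotheses of Lemma~\ref{lem:mondescshell} are preserved at each stage: the lemma takes as input a shelling of $\Gamma$ with $m\geq 1$ failures and outputs a shelling of $\Gamma$ with $m-1$ failures, so the object being fed back into the lemma is still a shelling of the same $\Gamma$ inside the same shellable complex $(C,\Omega)$; nothing about $(C,\Omega)$ or $\Gamma$ changes, only the ordering of the basis elements of $\Gamma$. Thus the induction is legitimate and no obstacle arises here — the real work was already done in the proof of Lemma~\ref{lem:mondescshell}, and this theorem is genuinely just ``repeated application.'' I would therefore keep the proof to two or three lines, explicitly invoking Lemma~\ref{lem:mondescshell} and induction on the number of failures, and remarking that finiteness of $\Gamma$ guarantees the process stops.
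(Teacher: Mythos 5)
Your proposal is correct and matches the paper exactly: the paper derives the theorem from Lemma~\ref{lem:mondescshell} with the single phrase ``by repeated application of this lemma,'' which is precisely your induction on the number of failures. Your added remarks on finiteness of $\Gamma$ and on the preservation of the lemma's hypotheses are sound and only make explicit what the paper leaves implicit.
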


\subsection{$i$-Skeletons of Shellable Chain Complexes}
\label{ssec:shellskeleton}
In Section~\ref{ssec:concepts} 
we introduced $i$-skeletons
$\sk_{i}(C)$ of a chain complex $(C,\Omega)$ as subcomplexes whose
chain modules $(\sk_{i}(C))_{\nu}$ are zero for $\nu>i$ and equal to 
$C_{\nu}$ otherwise. 

\begin{lemma}
	\label{lem:skeletonshell}
	Let $(C,\Omega)$ be a pure shellable chain complex, finite of order 
	$d\geq 1$. The $(d-1)$-skeleton $\sk_{d-1}(C)$ of $C$ is shellable, too. 
\end{lemma}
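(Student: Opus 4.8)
The plan is to construct a shelling of $\sk_{d-1}(C)$ directly from a given shelling of $C$. Start with a monotonically descending shelling $\Gamma=\{g_{1},\ldots,g_{k}\}$ of $C$; since $C$ is pure of order $d$, all $g_{i}$ lie in $\Omega_{d}$, so $s(g_{i})=d$ for every $i$. The maximal basis elements of $\sk_{d-1}(C)$ are exactly the elements of $\Omega_{d-1}$, because $\sk_{d-1}(C)$ has order $d-1$ and purity of $C$ forces every element of $\Omega_{d-1}$ to lie in the boundary of some $g_{i}$ but in the boundary of nothing else inside $\sk_{d-1}(C)$. For each $i$, condition~\ref{shell3} (applied to $g_{1}$) and condition~\ref{shell2} (applied to $g_{2},\ldots,g_{k}$) give a shelling of $(\Omega_{g_{i}})_{d-1}$, and these are precisely shellings of the "boundary complexes" $\bd(g_{i})$ viewed as chain complexes of order $d-1$. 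The idea is to concatenate these boundary-shellings in the order $g_{1},g_{2},\ldots,g_{k}$, deleting repetitions: list the shelling of $(\Omega_{g_{1}})_{d-1}$ first, then append the elements of the shelling of $(\Omega_{g_{2}})_{d-1}$ that have not yet appeared, keeping their relative order, and so on. Call the resulting ordering $h_{1},\ldots,h_{\ell}$ of $\Omega_{d-1}$.

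Next I would verify the three shelling conditions for this ordering. Fix $h_{j}$ with $j\geq 2$ and suppose $h_{j}$ first appears in the block coming from $g_{p}$. I want to show that $\Omega_{h_{j}}\cap(\bigcup_{i<j}\Omega_{h_{i}})$ generates a pure chain complex of order $s(h_{j})-1 = d-2$. The key observation is that the previously-listed elements $h_{1},\ldots,h_{j-1}$ split into: all of $(\Omega_{g_{q}})_{d-1}$ for $q<p$, together with the elements of the shelling of $(\Omega_{g_{p}})_{d-1}$ that precede $h_{j}$. Within the block of $g_{p}$, condition~\ref{shell2} already guarantees that $(\Omega_{g_{p}})_{d-1}$ is itself a shellable chain complex of order $d-1$, so the contribution of the $g_{p}$-block to $\Omega_{h_{j}}\cap(\bigcup_{i<j}\Omega_{h_{i}})$ is pure of order $d-2$ by the first shelling condition applied inside $C_{g_{p}}$. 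For the contribution of the earlier blocks $g_{q}$, $q<p$: the shelling order of $C$ ensures $\Omega_{g_{p}}\cap(\bigcup_{q<p}\Omega_{g_{q}})$ generates a pure complex of order $d-1$ in which $(\Omega_{g_{p}})_{d-1}$ has a shelling with the intersection elements first; intersecting with $\Omega_{h_{j}}$ and using that $h_{j}\in(\Omega_{g_{p}})_{d-1}$, this forces the earlier-block contribution to $\Omega_{h_{j}}\cap(\bigcup_{i<j}\Omega_{h_{i}})$ to either be empty or to be exactly a union of facets of $\Omega_{h_{j}}$ (\ie a pure subcomplex of order $d-2$). Combining the two contributions gives a pure subcomplex of $\Omega_{h_{j}}$ of order $d-2$ — note it cannot be empty because $h_{j}$ is not the first element overall, so it appears after something in its own block or its block is not the first, and in either case at least one facet of $h_{j}$ has already been listed. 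Conditions~\ref{shell2} and~\ref{shell3} for the $h_{j}$ are satisfied because $C_{h_{j}}$ is a subcomplex $C_{e^{d-1}_{i}}$ of the shellable complex $C$, hence shellable itself (by the remark following the definition of shellability), and a shellable pure complex of order $d-1$ has $(\Omega_{h_{j}})_{d-2}$ shellable with any prescribed pure subcomplex's elements listable first — this last point uses that shellability of $C_{h_{j}}$ propagates to its skeleton, which is exactly the inductive content.

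The main obstacle will be the induction on $d$ together with the claim that inside a single block one can reorder the boundary-shelling of $(\Omega_{g_{p}})_{d-1}$ so that the already-listed elements come first while staying a shelling; this is where I would invoke the first shelling condition inside the shellable complex $C_{g_{p}}$ (of order $d$) and an inductive hypothesis that lets us rearrange a shelling of a pure shellable complex so that the elements of any "initial segment under a shelling of a larger complex" come first. Concretely, I expect the cleanest route is: prove the lemma by induction on $d$, where the base case $d=1$ is nearly immediate (the $0$-skeleton has all of $\Omega_{0}$ maximal and any ordering is a shelling since order $0$ complexes are trivially shellable), and in the inductive step apply the lemma itself to each $C_{g_{i}}$ (which is pure shellable of order $d$, so $\sk_{d-1}(C_{g_{i}})$ is shellable of order $d-1$) to obtain the per-block shellings of $\bd(g_{i})$, then glue as above. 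The delicate bookkeeping is ensuring the gluing respects condition~\ref{shell1}; I would handle this by the two-contribution analysis sketched above, using the remark that for $e^{\lambda}_{i}\in\Omega_{g_{i_0}}\cap\Omega_{g_{j_0}}$ one has $\Omega_{e^{\lambda}_{i}}\subset\Omega_{g_{i_0}}\cap\Omega_{g_{j_0}}$, which is what makes the intersections behave like subcomplexes.
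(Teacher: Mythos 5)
Your construction---concatenating the shellings of the $\bd(g_{i})$ supplied by conditions \ref{shell2} and \ref{shell3}, deleting repetitions, and showing that the earlier-block contribution to each intersection is absorbed into the within-block one via $\bigl(\bigcup_{q<p}\Omega_{g_{q}}\bigr)\cap\Omega_{g_{p}}=\bigcup_{i}\Omega_{f_{i}}$---is essentially the paper's own proof. The reordering/induction-on-$d$ worry in your last paragraph is unnecessary: condition \ref{shell2} of the definition already hands you a shelling of $(\Omega_{g_{p}})_{d-1}$ in which the previously listed elements come first, which is all the gluing step needs.
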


\begin{proof}
	Let $\Omega_{d}=\{e^{d}_{1},\ldots,e^{d}_{k_{d}}\}$. We have to 
	show: $\Omega_{d-1}=\bigcup_{i=1}^{k_{d}}\bd(e^{d}_{i})$ has a 
	shelling. We will do this by an inductive argument.
	
	By definition we know that
	$\bd(e^{d}_{1})=(\Omega_{e^{d}_{1}})_{d-1}$ has a shelling. 
	
	\medskip
	Consider now $\bigcup_{i=1}^{\ell}\bd(e^{d}_{i})\subset\Omega_{d-1}$ 
	for $2\leq\ell\leq k_{d}$. We assume that the set 
	$\bigcup_{i=1}^{\ell-1}\bd(e^{d}_{i})$ has a shelling, and we want to 
	show that
	\begin{equation*}
		\Bigl(\bigcup_{i=1}^{\ell-1}\bd(e^{d}_{i})\Bigr)
		\cup (\Omega_{e^{d}_{\ell}})_{d-1}
		=
		\Bigl(\bigcup_{i=1}^{\ell-1}\bd(e^{d}_{i})\Bigr)
		\cup \bd(e^{d}_{\ell})		
	\end{equation*}
	has a shelling, too. 
	
	\medskip
	Because the complex $C$ is shellable we know that
	$(\Omega_{e^{d}_{\ell}})_{d-1}$ has a shelling in which the
	elements of
	\begin{equation*}
		\Bigl(\bigcup_{i=1}^{\ell-1}\bd(e^{d}_{i})\Bigr)\cap\Omega_{e^{d}_{\ell}}
		=\Biggl(
		\Bigl(\bigcup_{i=1}^{\ell-1}\Omega_{e^{d}_{i}}\Bigr)\cap\Omega_{e^{d}_{\ell}}
		\Biggr)_{d-1}
	\end{equation*}
	come first. If $e^{d}_{\ell}$ is precritical, then 
	$\bigl(\bigcup_{i=1}^{\ell-1}\bd(e^{d}_{i})\bigr)\cap\Omega_{e^{d}_{\ell}}
	=\bigl(\bigcup_{i=1}^{\ell-1}\bd(e^{d}_{i})\bigr)$, and we are 
	done. 
	
	So we assume $e^{d}_{\ell}$ noncritical. Then let
	$(\Omega_{e^{d}_{\ell}})_{d-1}=\{f_{1},\ldots,f_{s},h_{1},\ldots,h_{t}\}$
	be a shelling so that 
	$\bigl(\bigcup_{i=1}^{\ell-1}\bd(e^{d}_{i})\bigr)\cap\Omega_{e^{d}_{\ell}}
	=\{f_{1},\ldots,f_{s}\}$ and $s\geq 1$. 
	As $e^{d}_{\ell}$ is noncritical we also have $t\geq 1$. 
	
	Let 
	$\bigcup_{i=1}^{\ell-1}\bd(e^{d}_{i})
	=\{e_{1},\ldots,e_{r},f_{1},\ldots,f_{s}\}$. This order is not 
	necessarily a shelling but this does not matter. 
	
	We want to show that 
	$\{e_{1},\ldots,e_{r},f_{1},\ldots,f_{s},h_{1},\ldots,h_{t}\}$ has 
	a shelling. We start by checking
	\begin{equation*}
		\Biggl(
		\Bigl( \bigcup_{i=1}^{r}\Omega_{e_{i}} \Bigr)
		\cup
		\Bigl( \bigcup_{i=1}^{s}\Omega_{f_{i}} \Bigr)	
		\Biggr)
		\cap
		\Omega_{h_{1}}
		=
		\Biggl(
		\Bigl( \bigcup_{i=1}^{r}\Omega_{e_{i}} \Bigr)
		\cap\Omega_{h_{1}}
		\Biggr)
		\cup
		\Biggl(
		\Bigl( \bigcup_{i=1}^{s}\Omega_{f_{i}} \Bigr)
		\cap\Omega_{h_{1}}
		\Biggr).
	\end{equation*}
	As 
	$(\Omega_{e^{d}_{\ell}})_{d-1}=\{f_{1},\ldots,f_{s},h_{1},\ldots,h_{t}\}$
	is a shelling the set
		$\bigl(
		\bigcup_{i=1}^{s}\Omega_{f_{i}}
		\bigr)\cap\Omega_{h_{1}}$
	generates a pure chain complex of order $(d-2)$ and satisfies all 
	shelling conditions. 
	
	Because
	$\bigl(\bigcup_{i=1}^{\ell-1}\Omega_{e^{d}_{i}}\bigr)\cap\Omega_{e^{d}_{\ell}}
	=\bigcup_{j=1}^{s}\Omega_{f_{j}}$
	we get
		$\bigl( \bigcup_{i=1}^{r}\Omega_{e_{i}} \bigr)
		\cap\Omega_{h_{1}}
		\subset\bigcup_{j=1}^{s}\Omega_{f_{j}}$
	and therefore 
		$\bigl( \bigcup_{i=1}^{r}\Omega_{e_{i}} \bigr)
		\cap\Omega_{h_{1}}
		\subset\bigcup_{j=1}^{s}\Omega_{f_{j}}\cap\Omega_{h_{1}}$.
	We conclude:
	\begin{equation*}
		\Biggl(
		\Bigl( \bigcup_{i=1}^{r}\Omega_{e_{i}} \Bigr)
		\cup
		\Bigl( \bigcup_{i=1}^{s}\Omega_{f_{i}} \Bigr)	
		\Biggr) 
		\cap
		\Omega_{h_{1}}
		=
		\Bigl( \bigcup_{i=1}^{s}\Omega_{f_{i}} \Bigr)
		\cap\Omega_{h_{1}},
	\end{equation*}
	so the set 
	$\{e_{1},\ldots,e_{r},f_{1},\ldots,f_{s},h_{1}\}$ has a shelling. 
	
	\medskip
	For any $1\leq j\leq t-1$ we assume the set 
	$\{e_{1},\ldots,e_{r},f_{1},\ldots,f_{s},h_{1},\ldots,h_{j}\}$ has 
	a shelling, \ie the set 
	\begin{equation*}
		\Bigl( \bigcup_{i=1}^{r}\Omega_{e_{i}} \Bigr)
		\cup
		\Bigl( \bigcup_{i=1}^{s}\Omega_{f_{i}} \Bigr)
		\cup
		\Bigl( \bigcup_{i=1}^{j}\Omega_{h_{i}} \Bigr)
	\end{equation*}
	generates a shellable chain complex of order $(d-1)$. Then a similar 
	argument as above shows that
	\begin{align*}
		&\Biggl(
		\Bigl( \bigcup_{i=1}^{r}\Omega_{e_{i}} \Bigr)
		\cup
		\Bigl( \bigcup_{i=1}^{s}\Omega_{f_{i}} \Bigr)	
		\cup
		\Bigl( \bigcup_{i=1}^{j}\Omega_{h_{i}} \Bigr)
		\Biggr) 
		\cap
		\Omega_{h_{j+1}} \\
		&=
		\underbrace{
		\Biggl(
		\Bigl( \bigcup_{i=1}^{r}\Omega_{e_{i}} \Bigr)
		\cap\Omega_{h_{j+1}}
		\Biggr)
		}_{
		\subset 
		\bigl( \bigcup_{i=1}^{s}\Omega_{f_{i}} \bigr)
		\cap\Omega_{h_{j+1}}
		}
		\cup
 		\underbrace{
		\Biggl(
		\Biggl(
		\Bigl( \bigcup_{i=1}^{s}\Omega_{f_{i}} \Bigr)
		\cup
		\Bigl( \bigcup_{i=1}^{j}\Omega_{h_{i}} \Bigr)
		\Biggr)
		\cap\Omega_{h_{j+1}}
		\Biggr)
 		}_{\text{generates a pure complex of order $(d-2)$}} 
		\\
		& =
		\Biggl(
		\Bigl( \bigcup_{i=1}^{s}\Omega_{f_{i}} \Bigr)
		\cup
		\Bigl( \bigcup_{i=1}^{j}\Omega_{h_{i}} \Bigr)
		\Biggr)
		\cap\Omega_{h_{j+1}}. 
	\end{align*}
	This set is a basis of a shellable pure chain complex of order 
	$(d-2)$. Therefore, the set
	$\{e_{1},\ldots,e_{r},f_{1},\ldots,f_{s},h_{1},\ldots,h_{j+1}\}$ 
	has a shelling. 
	
	By induction we conclude that the set 
	\begin{equation*}
		\{e_{1},\ldots,e_{r},f_{1},\ldots,f_{s},h_{1},\ldots,h_{t}\}
		=\bigcup_{i=1}^{\ell}\bd(e^{d}_{i})
	\end{equation*}
	has a shelling. 
	Therefore, the first induction delivers:
	$\Omega_{d-1}=\bigcup_{i=1}^{k_{d}}\bd(e^{d}_{i})$
	has a shelling. 
\end{proof}

\begin{thm}
	\label{thm:skeletonshell}
	Let $(C,\Omega)$ be a shellable chain complex, finite of 
	order~$d\geq 1$. 
	The $(d-1)$-skeleton $\sk_{d-1}(C)$ of $C$ is shellable, too. 
\end{thm}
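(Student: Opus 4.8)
The plan is to reduce to the pure case, which is Lemma~\ref{lem:skeletonshell}. For $d=1$ there is nothing to show, since $\sk_{0}(C)$ has order $0$; so assume $d\geq 2$. Fix a monotonically descending shelling $\Gamma=\{g_{1},\dots,g_{k}\}$ of $C$, which exists by the theorem following Lemma~\ref{lem:mondescshell}, and let $p$ be the largest index with $s(g_{p})=d$, so that $s(g_{1})=\dots=s(g_{p})=d>s(g_{p+1})\geq\dots\geq s(g_{k})$; since no basis element lies in the boundary of one of strictly larger degree, $\Omega_{d}=\{g_{1},\dots,g_{p}\}$. I then consider the subcomplex $C^{\top}$ generated by $\Omega_{g_{1}}\cup\dots\cup\Omega_{g_{p}}$. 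One checks that $C^{\top}$ is \emph{pure} of order $d$ (each $C_{g_{i}}$ is pure of order $d$, so every basis element of $C^{\top}$ in degree $\leq d-1$ lies in the boundary of a basis element of $C^{\top}$) and that $g_{1},\dots,g_{p}$ is a shelling of $C^{\top}$ (for $j\leq p$ the shelling conditions only mention $g_{1},\dots,g_{j-1}$, which all belong to $C^{\top}$). By Lemma~\ref{lem:skeletonshell} the set $\Omega^{\top}:=\bigcup_{e\in\Omega_{d}}\bd(e)$ --- which is the basis of $(C^{\top})_{d-1}$ and, since $\sk_{d-1}(C^{\top})$ is pure of order $d-1$, the set of all its maximal basis elements --- admits a shelling $\Omega^{\top}=\{b_{1},\dots,b_{q}\}$.

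Next I would observe that the maximal basis elements of $\sk_{d-1}(C)$ are exactly $\Omega^{\top}\sqcup\{g_{p+1},\dots,g_{k}\}$: the top degree of $\sk_{d-1}(C)$ is $d-1$, so all of $\Omega_{d-1}$ is maximal there and the remaining maximal elements are the $g\in\Gamma$ with $s(g)\leq d-2$, while using $\Omega_{d}=\{g_{1},\dots,g_{p}\}$ one sees $\Omega_{d-1}=\Omega^{\top}\sqcup(\Gamma\cap\Omega_{d-1})$. I then propose the order
\begin{equation*}
	b_{1},\ \dots,\ b_{q},\ g_{p+1},\ g_{p+2},\ \dots,\ g_{k}
\end{equation*}
and check that it is a shelling of $\sk_{d-1}(C)$. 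For the initial block $b_{1},\dots,b_{q}$ the three shelling conditions hold verbatim, because they are phrased in terms of the subcomplexes $C_{b_{i}}$, which are the same in $C$ as in $C^{\top}$. For $g_{j}$ with $j>p$ the point is that $\bigcup_{i=1}^{q}\Omega_{b_{i}}$ equals the whole basis of $\sk_{d-1}(C^{\top})$, that is $\bigl(\bigcup_{i=1}^{p}\Omega_{g_{i}}\bigr)\setminus\{g_{1},\dots,g_{p}\}$; since $\Omega_{g_{j}}$ has top degree $s(g_{j})<d$ it contains none of $g_{1},\dots,g_{p}$, and hence
\begin{equation*}
	\Omega_{g_{j}}\cap\Bigl(\bigcup_{i=1}^{q}\Omega_{b_{i}}\cup\bigcup_{p<i<j}\Omega_{g_{i}}\Bigr)
	=\Omega_{g_{j}}\cap\Bigl(\bigcup_{i=1}^{j-1}\Omega_{g_{i}}\Bigr),
\end{equation*}
which is precisely the set appearing in conditions~\ref{shell1} and \ref{shell2} for $g_{j}$ in the given shelling of $C$. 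So those conditions (and their restrictions to degree $s(g_{j})-1$) transfer directly, and the proposed order is a shelling of $\sk_{d-1}(C)$.

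The main obstacle is the combinatorial bookkeeping in this reduction: showing that $C^{\top}$ is pure so that Lemma~\ref{lem:skeletonshell} applies to it, identifying $\bigcup_{i=1}^{q}\Omega_{b_{i}}$ with the bottom of $C^{\top}$, and then checking that the intersection sets controlling the shelling conditions for the low-degree maximal elements $g_{p+1},\dots,g_{k}$ are literally the same for $\sk_{d-1}(C)$ as for $C$. The degree inequality $s(g_{i})=d>s(g_{j})$ for $i\leq p<j$ is exactly what makes these identifications go through.
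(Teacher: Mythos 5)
Your argument is correct and follows essentially the same route as the paper's proof: reduce to the pure subcomplex generated by $\bigcup_{e\in\Omega_{d}}\Omega_{e}$, apply Lemma~\ref{lem:skeletonshell} to shell its $(d-1)$-dimensional part, and then append the remaining maximal elements of $C$, using the observation that the relevant intersection sets coincide with those already controlled by the shelling of $C$. Your write-up is in fact slightly more explicit than the paper's in treating all of $g_{p+1},\ldots,g_{k}$ (including the maximal elements of degree $\leq d-2$) uniformly, but the decomposition and the key identity are the same.
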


\begin{proof}
	We have already proved this statement if $(C,\Omega)$ is pure. 
	So we have only to consider the non-pure case. 
	
	Let $\Gamma\subset\Omega$ be the set of all maximal basis elements
	and let the elements of $\Gamma$ be ordered 
	in a monotonically descending shelling.
	
	Let $\Omega_{d}=\{e^{d}_{1},\ldots,e^{d}_{k_{d}}\}\subset\Gamma$, 
	and 
	$\Gamma\cap\Omega_{d-1}=\{g^{d-1}_{1},\ldots,g^{d-1}_{m}\}$. In 
	the chosen shelling order of $\Gamma$ the elements of 
	$\Omega_{d}$ come first, followed by all elements of 
	$\Gamma\cap\Omega_{d-1}$. 
	The basis of the chain module $\bigl(\sk_{d-1}(C)\bigr)_{d-1}$ is
	\begin{equation*}
		\Omega_{d-1}=\Bigl(\bigcup_{i=1}^{k_{d}}\bd(e^{d}_{i})\Bigr)
		\cup\{g^{d-1}_{1},\ldots,g^{d-1}_{m}\}.
	\end{equation*}
	We have already proved that 
	$\bigl(\bigcup_{i=1}^{k_{d}}\bd(e^{d}_{i})\bigr)$ has a shelling 
	because the subcomplex generated by 
	$\bigcup_{i=1}^{k_{d}}\Omega_{e^{d}_{i}}$ is shellable and pure. 
	
	Let 
	$\widehat\Omega_{e^{d}_{i}}:=\Omega_{e^{d}_{i}}\setminus\{e^{d}_{i}\}$
	be the basis of the $(d-1)$-skeleton of $C_{e^{d}_{i}}$. Then we get:
	\begin{equation*}
		\Bigl(\bigcup_{i=1}^{k_{d}}\Omega_{e^{d}_{i}}\Bigr)
		\cap\Omega_{g^{d-1}_{1}}
		=
		\Bigl(\bigcup_{i=1}^{k_{d}}\widehat\Omega_{e^{d}_{i}}\Bigr)
		\cap\Omega_{g^{d-1}_{1}}. 
	\end{equation*}
	As $C$ is shellable, the set on the left side satisfies all 
	shelling conditions. Therefore, 
	$\Bigl(\bigcup_{i=1}^{k_{d}}\bd(e^{d}_{i})\Bigr)
	\cup\{g^{d-1}_{1}\}\subset\Omega_{d-1}$ 
	has a shelling, and by induction we 
	conclude that $\Omega_{d-1}$ has a shelling, too. 
\end{proof}

\begin{cor}
	Let $(C,\Omega)$ be a shellable chain complex of order $d$.
	For $0\leq i\leq d$, every $i$-skeleton $\sk_{i}(C)$ of $C$ is
	shellable.
\end{cor}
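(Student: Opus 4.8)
The plan is to derive the corollary from Theorem~\ref{thm:skeletonshell} by a downward induction on the order. The key observation is that taking successive skeletons composes: for $0\leq i<j$ one has $\sk_{i}(\sk_{j}(C))=\sk_{i}(C)$. Indeed, by Definition~\ref{defn:skeleton} the chain modules of $\sk_{j}(C)$ are $C_{\nu}$ for $0\leq\nu\leq j$ and $0$ otherwise, and applying $\sk_{i}$ to this complex retains exactly the modules $C_{\nu}$ with $0\leq\nu\leq i$; since all boundary maps in question are just restrictions of those of $C$, both sides agree as subcomplexes of $C$ carrying the induced basis $\bigcup_{\nu=0}^{i}\Omega_{\nu}$. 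In particular $\sk_{i}(C)=\sk_{i}\bigl(\sk_{i+1}(C)\bigr)$, which is the identity I intend to iterate.

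Now fix $i$ with $0\leq i\leq d$. If $i=d$ there is nothing to prove, since $\sk_{d}(C)=C$ is shellable by hypothesis (recall that a finite chain complex of order $d$ equals its own $d$-skeleton). So assume $i<d$; then $d\geq 1$. I claim that $\sk_{j}(C)$ is shellable for every $j$ with $i\leq j\leq d$, and I prove this by downward induction on $j$. The base case $j=d$ is the hypothesis. For the inductive step, suppose $i<j\leq d$ and $\sk_{j}(C)$ is shellable. Since $j>i\geq 0$ we have $j\geq 1$, so $\sk_{j}(C)$ is a shellable chain complex finite of order $j\geq 1$, and Theorem~\ref{thm:skeletonshell} applies to it: its $(j-1)$-skeleton is shellable. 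By the composition identity above, that $(j-1)$-skeleton is $\sk_{j-1}\bigl(\sk_{j}(C)\bigr)=\sk_{j-1}(C)$, which completes the induction. Taking $j=i$ yields that $\sk_{i}(C)$ is shellable.

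The argument is essentially bookkeeping; the only points requiring a moment's care are the composition identity $\sk_{i}(\sk_{j}(C))=\sk_{i}(C)$ and verifying that the hypothesis ``order $\geq 1$'' of Theorem~\ref{thm:skeletonshell} is met at every step. The latter holds because the induction never descends below the fixed index $i\geq 0$, so each intermediate skeleton $\sk_{j}(C)$ invoked has order $j\geq 1$. The genuinely substantial work---handling the non-pure case via a monotonically descending shelling and the inductive shelling of $\Omega_{d-1}$---has already been carried out in Lemma~\ref{lem:skeletonshell} and Theorem~\ref{thm:skeletonshell}, so I expect no further obstacle here.
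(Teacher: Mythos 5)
Your proof is correct and is precisely the iteration of Theorem~\ref{thm:skeletonshell} that the paper leaves implicit (the corollary is stated there without proof). The composition identity $\sk_{i}(\sk_{j}(C))=\sk_{i}(C)$ and the check that each intermediate skeleton has order $j\geq 1$ are exactly the right points to make explicit.
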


Hence, if any $i$-skeleton $\sk_{i}(C)$ of a chain complex $C$ is not shellable, 
the complex itself is not shellable. 

\medskip
In the proof of Lemma~\ref{lem:skeletonshell} we used a special ordering
of the chain module bases $\Omega_{\nu}$.  
We emphasise it for later purpose: 
\begin{rem}
  \label{rem:specialordering}
  Let $(C,\Omega)$ be a shellable chain complex of order $d$ and let 
  its set $\Gamma$ of maximal basis elements be ordered in a 
  monotonically descending shelling (\siehe{Definition~\ref{defn:mondescshell}}). 
  Then, we can order the elements in the bases
  $\Omega_{\nu}$ of the chain modules $C_{\nu}$, $0\leq \nu\leq d-1$, 
  as follows: 

  The first segment in $\Omega_{\nu}$ are the elements of $\bd(e^{\nu+1}_{1})$,
  ordered in a shelling. Then we add the elements of 
  $\bd(e^{\nu+1}_{2})\setminus\bd(e^{\nu+1}_{1})$
  in the same order as in the shelling of $\bd(e^{\nu+1}_{2})$. 
  We proceed iteratively, adding the elements of
  $\bd(e^{\nu+1}_{i})$ which are not contained in any 
  $\bd(e^{\nu+1}_{\ell})$ for $\ell < i$. 
  Eventually we are left with the elements of $\Omega_{\nu}\cap\Gamma$, 
  which we will add in the same order as they occur in $\Gamma$. 
  As proven above, this ordering delivers a shelling of $\Omega_{\nu}$. 
\end{rem}

\section{Regular Chain Complexes}
\label{sec:regular}

\subsection{Definition and Examples}
\label{ssec:regulardef}
\begin{defn}
	Let $(C,\Omega)$ be a shellable chain complex of order~$d$ over a 
	principal ring $R$ and $\Gamma\subset\Omega$ be the set of all 
	maximal basis elements. 
	
	Let $\Omega_{d}:=\{e^{d}_{1},\ldots,e^{d}_{k_{d}}\}$ 
	and
	$\Omega_{\nu}:=\{e^{\nu}_{1},\ldots,e^{\nu}_{m_{\nu}},
	e^{\nu}_{m_{\nu}+1},\ldots,e^{\nu}_{k_{\nu}}\}$
	for $\nu\leq d-1$
	so that the following holds\footnote{Notice two special cases: 
	\begin{itemize}
		\item  $m_{\nu}=k_{\nu}$. Then 
		$\Gamma\cap\Omega_{\nu}=\emptyset$. 
	
		\item  $m_{\nu}=0$. Then $\Gamma\cap\Omega_{\nu}=\Omega_{\nu}$. 
		This is always valid for $\Omega_{d}$. 
	\end{itemize}
	}:
	\begin{itemize}
		\item  
		$\Gamma\cap\Omega_{\nu}=\{e^{\nu}_{m_{\nu}+1},\ldots,e^{\nu}_{k_{\nu}}\}$;
	
		\item  $\Gamma=\{e^{d}_{1},\ldots,e^{d}_{k_{d}}\}
		\cup
		\{e^{d-1}_{m_{d-1}+1},\ldots,e^{d-1}_{k_{d-1}}\}
		\cup
		\ldots
		\cup
		\{e^{0}_{m_{0}+1},\ldots,e^{0}_{k_{0}}\}$
		is a monotonically descending shelling; 
	
		\item  for $0\leq \nu\leq d-1$, let each basis $\Omega_{\nu}$
		be ordered as in Remark~\ref{rem:specialordering}. 
	\end{itemize}
	
	Then, $\Gamma$ has a \emph{regular order} if the following
	two conditions are fulfilled:
	\begin{enumerate}
		\item  For any $e^{\nu}_{\ell}\in\Gamma$ (\ie 
		$m_{\nu}+1\leq\ell\leq k_{\nu}$):
		
		If $\bd(e^{\nu}_{\ell})\subset 
		\bigcup_{i=1}^{\ell-1}\bd(e^{\nu}_{i})$,
		then $e^{\nu}_{\ell}$ is precritical,
		\ie there exist elements $a_{i}\in R$, $1\leq i\leq \ell$, $a_{\ell}\neq 0$, 
		so that
		\begin{equation*}
			a_{\ell}\bm_{\nu}(e^{\nu}_{\ell})
			=\sum_{i=1}^{\ell-1}a_{i}\bm_{\nu}(e^{\nu}_{i}).
		\end{equation*}
	
		\item For any $e^{\nu}_{\ell}\in\Omega_{\nu}$, $\nu\geq 1$, let
		$(\Omega_{e^{\nu}_{\ell}})_{\nu-1}=\bd(e^{\nu}_{\ell}):=
		\{f^{\nu-1}_{1},\ldots,f^{\nu-1}_{n_{\ell}}\}$
		be a shelling
		in which the elements of 
		$(\bigcup_{i=1}^{\ell-1}\Omega_{e^{\nu}_{i}})
		\cap\Omega_{e^{\nu}_{\ell}}$
		come first
		so that for any $1\leq j\leq n_{\ell}$ holds:
		
		If $\bd(f^{\nu-1}_{j})\subset 
		\bigcup_{i=1}^{j-1}\bd(f^{\nu-1}_{i})$,
		then $c_{j}\bm_{\nu-1}(f^{\nu-1}_{j})$ is a linear combination of 
		$\bm_{\nu-1}(f^{\nu-1}_{i})$ for some $0\neq c_{j}\in R$, 
		\ie there exist 
		$c_{i}\in R$, $1\leq i\leq j$, $c_{j}\neq 0$, so that
		\begin{equation*}
			c_{j}\bm_{\nu-1}(f^{\nu-1}_{j})
			=\sum_{i=1}^{j-1}c_{i}\bm_{\nu-1}(f^{\nu-1}_{i}).
		\end{equation*}
	\end{enumerate}
	The chain complex $(C,\Omega)$ is \emph{regular} if the set $\Gamma$ 
	has a regular order. 
\end{defn}

\begin{defn}
	A regular chain complex $(C,\Omega)$ whose subcomplexes 
	$C_{e^{\nu}_{i}}$ are all acyclic is called \emph{totally regular}.
\end{defn}

\begin{rem}
	A chain complex which comes from a simplicial complex is always 
	totally regular. This is caused by the geometry of a simplex and 
	the special boundary mapping. 
\end{rem}

Any finite chain complex of order $0$ satisfies trivially all 
regularity conditions, so it is totally regular. But there also exist 
more serious examples:

\begin{exs}
	\begin{enumerate}
		\item  Let $(C,\Omega)$ be a finite chain complex of order $1$ 
		over $\Z$  
		whose chain modules have the bases 
		$\Omega_{1}=\{e^{1}_{1},e^{1}_{2}\}$ and 
		$\Omega_{0}=\{e^{0}_{1},e^{0}_{2}\}$. 
		Let $\bm_{1}(e^{1}_{1})=2e^{0}_{1}+e^{0}_{2}$
		and $\bm_{1}(e^{1}_{2})=e^{0}_{1}+2e^{0}_{2}$. 
		The reader may convince himself that this chain complex is 
		shellable. 
		
		It is $\bd(e^{1}_{1})=\Omega_{0}=\bd(e^{1}_{2})$, but 
		$\bm_{1}(e^{1}_{1})$ and $\bm_{1}(e^{1}_{2})$ are linearly 
		independent. So $(C,\Omega)$ is not regular. 
		
		Furthermore, it is 
		$\bm_{1}(2e^{1}_{1}-e^{1}_{2})=3e^{0}_{1}$. As 
		$e^{0}_{1}\not\in\im\bm_{1}$, the factor module 
		$H_{0}(C)=\quotient{C_{0}}{\im\bm_{1}}$ is not torsion free. 
		Hence, $H_{0}(C)\not\cong\Z$, \ie the chain complex 
		$(C,\Omega)$ is not acyclic. 

		\smallskip
		\item  We consider a finite chain complex $(C,\Omega)$ of order 
		$2$ over $\Z$ whose chain modules have the following bases:
		$\Omega_{2}=\{e^{2}_{1},e^{2}_{2}\}$, 
		$\Omega_{1}=\{e^{1}_{1},e^{1}_{2},e^{1}_{3}\}$ and
		$\Omega_{0}=\{e^{0}_{1},e^{0}_{2}\}$. 
		Let the boundary mappings $\bm_{2}$ and $\bm_{1}$ defined by:
		\begin{align*}
			\bm_{2}(e^{2}_{1}) & = 2e^{1}_{1}+e^{1}_{2}+e^{1}_{3}, 
			& \bm_{1}(e^{1}_{1}) & = e^{0}_{1}-e^{0}_{2},  \\
			\bm_{2}(e^{2}_{2}) & = e^{1}_{1}+e^{1}_{2}, 
			& \bm_{1}(e^{1}_{2}) & = e^{0}_{2}-e^{0}_{1},  \\
			&
			&\bm_{1}(e^{1}_{3}) & = e^{0}_{2}-e^{0}_{1}.
		\end{align*}
		This chain complex is shellable. But its natural order is 
		not regular because $\bd(e^{2}_{2})\subset\bd(e^{2}_{1})$.
		Changing the order of 
		$e^{2}_{1}$ and $e^{2}_{2}$ delivers a regular order, indeed! 
		Therefore, this chain complex is regular, but not totally regular
		as $C_{e^{2}_{1}}$ is not acyclic. 

		Now we have a look at the homology of $(C,\Omega)$. 
		\begin{itemize}
			\item  $\ker\bm_{2}=0$, so $H_{2}(C)=0$.
		
			\item  It is 
			$\ker\bm_{1}=\erz{(e^{1}_{1}+e^{1}_{2}),(e^{1}_{1}+e^{1}_{3})}
			=\im\bm_{2}$, hence $H_{1}(C)=0$.
		
			\item  As $\ker\bm_{0}=\erz{e^{0}_{1},e^{0}_{2}}$ and 
			$\im\bm_{1}=\erz{e^{0}_{1}-e^{0}_{2}}$ we get 
			$H_{0}(C)\cong\Z$. 
		\end{itemize}
		Therefore, $(C,\Omega)$ is acyclic. It is even a cone if we choose
		$S_{2}=\{e^{2}_{1},e^{2}_{2}\}$,
		$S_{1}=\{e^{1}_{1}\}$ and
		$S_{0}=\{e^{0}_{1}\}$. 
		
		\smallskip
		\item Let $(C,\Omega)$ be a finite chain complex of order $1$ 
		over $\Z$
		whose chain modules have the bases 
		$\Omega_{1}=\{e^{1}_{1},e^{1}_{2}\}$ and 
		$\Omega_{0}=\{e^{0}_{1},e^{0}_{2},e^{0}_{3}\}$. 
		Let $\bm_{1}(e^{1}_{1})=2e^{0}_{1}+e^{0}_{2}$
		and $\bm_{1}(e^{1}_{2})=e^{0}_{1}+e^{0}_{2}$. 
		
		This complex is shellable, and every subcomplex $C_{e^{\nu}_{j}}$ 
		is acyclic. Furthermore, 
		we have $\bd(e^{1}_{1})=\{e^{0}_{1},e^{0}_{2}\}=\bd(e^{1}_{2})$, but 
		$\bm_{1}(e^{1}_{1})$ and $\bm_{1}(e^{1}_{2})$ are linearly 
		independent. So $(C,\Omega)$ is not regular. 
		
		We compute the homology groups:
		\begin{itemize}
			\item  $\ker\bm_{1}=0$, so $H_{1}(C)=0$.
			
			\item  $\ker\bm_{0}=\erz{e^{0}_{1},e^{0}_{2},e^{0}_{3}}$ and 
			$\im\bm_{1}=\erz{e^{0}_{1}+e^{0}_{2},2e^{0}_{1}+e^{0}_{2}}$, therefore we get 
			$H_{0}(C)\cong\Z$. 
		\end{itemize}
		Hence, this chain complex is acyclic. But it is not a cone as
		$\bm_{1}(e^{1}_{1}-e^{1}_{2})=e^{0}_{1}$.
		
		\smallskip
		\item  Let $(C,\Omega)$ be a finite chain complex of order $2$ 
		over $\Z$ and $\Omega_{2}=\{e^{2}_{1}\}$, 
		$\Omega_{1}=\{e^{1}_{1},e^{1}_{2},e^{1}_{3}\}$, 
		$\Omega_{0}=\{e^{0}_{1},e^{0}_{2}\}$
		be the bases of its chain modules. Let
		\begin{align*}
			\bm_{2}(e^{2}_{1}) & = e^{1}_{1}+e^{1}_{2}+e^{1}_{3}, 
			& \bm_{1}(e^{1}_{1}) & = e^{0}_{1}-e^{0}_{2},  \\
			 &  
			& \bm_{1}(e^{1}_{2}) & = e^{0}_{1}-e^{0}_{2},  \\
			 &
			&\bm_{1}(e^{1}_{3}) & = 2(e^{0}_{2}-e^{0}_{1}).
		\end{align*}
		This chain complex is shellable and regular. 
		
		We compute its homology groups:
		\begin{itemize}
			\item $\ker\bm_{2}=0$, so $H_{2}(C)=0$.
		
			\item  As 
			$\ker\bm_{1}
			=\erz{(e^{1}_{1}-e^{1}_{2}),(e^{1}_{1}+e^{1}_{2}+e^{1}_{3})}$
			and
			$\im\bm_{2}=\erz{e^{1}_{1}+e^{1}_{2}+e^{1}_{3}}$ 
			we get $H_{1}(C)\cong\Z$.
		
			\item  $\ker\bm_{0}=\erz{e^{0}_{1},e^{0}_{2}}$ and 
			$\im\bm_{1}=\erz{e^{0}_{1}-e^{0}_{2}}$, hence 
			$H_{0}(C)\cong\Z$. 
		\end{itemize}
		Hence, $(C,\Omega)$ is not acyclic. In particular, $(C,\Omega)$ 
		is not totally regular because then $C_{e^{2}_{1}}=C$ must be 
		acyclic. 

		\smallskip
		\item  Let $(C,\Omega)$ be a finite chain complex of order $1$ 
		over $\Z$ whose chain modules have the bases
		$\Omega_{1}=\{e^{1}_{1},e^{1}_{2}\}$ and 
		$\Omega_{0}=\{e^{0}_{1},e^{0}_{2},e^{0}_{3}\}$. Let
		\begin{align*}
			\bm_{1}(e^{1}_{1}) & = e^{0}_{1},  \\
			\bm_{1}(e^{1}_{2}) & = e^{0}_{1}+e^{0}_{2}+e^{0}_{3}.
		\end{align*}
		We observe that this chain complex is acyclic, shellable and 
		regular, but not a cone.
		As both subcomplexes $C_{e^{1}_{1}}$ and $C_{e^{1}_{2}}$ are 
		not acyclic this chain complex is not totally regular. 
		
		If we change the order of the basis elements 
		$e^{1}_{1}$ and $e^{1}_{2}$, 
		we get an ordering which is not regular. 

		\smallskip
		\item  We consider an example for a chain complex cone
		again 
		(\siehe{Examples~\ref{exs:acyclexam}\,(\ref{exacyc4})}). 
		Let $(C,\Omega)$ be a 
		finite chain complex of order $2$ over $\Z$. Let 
		$\Omega_{2}=\{e^{2}_{1},e^{2}_{2},e^{2}_{3}\}$, 
		$\Omega_{1}=\{e^{1}_{1},e^{1}_{2},e^{1}_{3},e^{1}_{4}\}$,
		$\Omega_{0}=\{e^{0}_{1},e^{0}_{2}\}$ and: 
		\begin{align*}
			\bm_{2}(e^{2}_{1}) & = e^{1}_{1}+e^{1}_{2},
			&\bm_{1}(e^{1}_{1}) & = e^{0}_{2}-e^{0}_{1},  \\
			\bm_{2}(e^{2}_{2}) & = e^{1}_{2}+e^{1}_{3}, 
			&\bm_{1}(e^{1}_{2}) & = e^{0}_{1}-e^{0}_{2},  \\
			\bm_{2}(e^{2}_{3}) & = e^{1}_{3}+e^{1}_{4},
			&\bm_{1}(e^{1}_{3}) & = e^{0}_{2}-e^{0}_{1}, \\
			&
			&\bm_{1}(e^{1}_{4}) & = e^{0}_{1}-e^{0}_{2}.
		\end{align*}
		This chain complex is shellable and regular. As every 
		subcomplex $C_{e^{\nu}_{j}}$ is 
		acyclic we conclude that this chain complex is even totally regular. 
		
		The reader may notice that this chain complex does not come 
		from a simplicial complex! 

		\smallskip
		\item  We have a look at another example for a chain complex cone
		(\siehe{Examples~\ref{exs:acyclexam}\,(\ref{exacyc3})}). 
		Let $(C,\Omega)$ be a finite chain complex of order $2$ 
		over $\Z$ with bases 
		$\Omega_{2}=\{e^{2}_{1}\}$, 
		$\Omega_{1}=\{e^{1}_{1},e^{1}_{2}\}$ and
		$\Omega_{0}=\{e^{0}_{1},e^{0}_{2}\}$ so that:
		\begin{align*}
			\bm_{2}(e^{2}_{1}) & = e^{1}_{1}+e^{1}_{2}, 
			& \bm_{1}(e^{1}_{1}) & = e^{0}_{2}-e^{0}_{1},  \\
			&
			&\bm_{1}(e^{1}_{2}) & = e^{0}_{1}-e^{0}_{2}.
		\end{align*}		
		As above, this chain complex is shellable, regular and every 
		subcomplex $C_{e^{\nu}_{j}}$ is acyclic. So this chain complex 
		is totally regular, too. It also does not come from a simplicial complex. 
		
		\smallskip
		\item  We modify our last example in some detail.  
		Let $(C,\Omega)$ be a finite chain complex of order $2$ 
		over $\Z$ with bases 
		$\Omega_{2}=\{e^{2}_{1},e^{2}_{2}\}$, 
		$\Omega_{1}=\{e^{1}_{1},e^{1}_{2}\}$ and
		$\Omega_{0}=\{e^{0}_{1},e^{0}_{2}\}$ so that:
		\begin{align*}
			\bm_{2}(e^{2}_{1}) & = e^{1}_{1}+e^{1}_{2}, 
			& \bm_{1}(e^{1}_{1}) & = e^{0}_{2}-e^{0}_{1},  \\
			\bm_{2}(e^{2}_{2}) & = e^{1}_{1}+e^{1}_{2},
			&\bm_{1}(e^{1}_{2}) & = e^{0}_{1}-e^{0}_{2}.
		\end{align*}		
		This chain complex is still totally regular, but it refuses to 
		be acyclic as there is a critical basis element $e^{2}_{2}$. 
	\end{enumerate}
\end{exs}

\begin{rem}
  In Section~\ref{ssec:defshell} we considered some examples of 
  shellable chain complexes of order $1$. The chain complexes in 
  the Examples~\ref{exs:shellexam}\,(\ref{exshell1}) and (\ref{exshell2})
  are all regular, but the complexes in the 
  Example~\ref{exs:shellexam}\,(\ref{exshell3})
  are not. Hence, the homology of regular chain complexes is not clear in general. 
\end{rem}

\subsection{$i$-Skeletons of Regular Chain Complexes}
\label{ssec:regskeleton}
\begin{lemma}
	\label{lem:skeletonreg}
	Let $(C,\Omega)$ be a pure regular chain complex, finite of order 
	$d\geq 1$. The $(d-1)$-skeleton $\sk_{d-1}(C)$ of $C$ is regular, too. 
	If $(C,\Omega)$ is even totally regular, then 
	$\sk_{d-1}(C)$ is also totally regular. 
\end{lemma}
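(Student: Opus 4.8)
The plan is to verify directly that the basis $\Omega_{d-1}$ of the top chain module of $\sk_{d-1}(C)$, ordered as prescribed in Remark~\ref{rem:specialordering}, together with the already-regular lower part of $\Omega$, constitutes a regular order. Since $(C,\Omega)$ is pure of order $d$, all maximal basis elements live in $\Omega_d$, so $\Gamma\cap\Omega_\nu=\emptyset$ for $\nu\le d-1$; hence in the skeleton the set of maximal basis elements becomes $\Omega_{d-1}=\bigcup_{i=1}^{k_d}\bd(e^d_i)$, which by Lemma~\ref{lem:skeletonshell} already has a shelling, and by purity every such shelling is monotonically descending. So the structural hypotheses of a regular order are in place and only the two numbered regularity conditions need checking.

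First I would treat condition~(1) for $\sk_{d-1}(C)$: take $f^{d-1}_\ell\in\Omega_{d-1}=\Gamma(\sk_{d-1}(C))$ with $\bd(f^{d-1}_\ell)\subset\bigcup_{i=1}^{\ell-1}\bd(f^{d-1}_i)$, and show $f^{d-1}_\ell$ is precritical in $\sk_{d-1}(C)$, \ie $c_\ell\bm_{d-1}(f^{d-1}_\ell)=\sum_{i<\ell}c_i\bm_{d-1}(f^{d-1}_i)$ for some $0\ne c_\ell$. The key observation is that the ordering of $\Omega_{d-1}$ from Remark~\ref{rem:specialordering} is built by concatenating shellings of the $\bd(e^d_i)$; so whichever block $f^{d-1}_\ell$ sits in — say it is an element of $\bd(e^d_j)$ not appearing earlier — the elements $f^{d-1}_1,\ldots,f^{d-1}_{\ell-1}$ restricted to $(\Omega_{e^d_j})_{d-1}$ are exactly an initial segment of the shelling of $\bd(e^d_j)$ used inside the regularity data of $C$. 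If the hypothesis $\bd(f^{d-1}_\ell)\subset\bigcup_{i<\ell}\bd(f^{d-1}_i)$ holds, then in particular $\bd(f^{d-1}_\ell)$ is covered by those $f^{d-1}_i$ that precede it \emph{within} $\bd(e^d_j)$ (one argues the extra elements from other blocks cannot help cover $\bd(f^{d-1}_\ell)$ beyond what the shelling of $\bd(e^d_j)$ already must, using purity of $C_{e^d_j}$); so condition~(2) of the regular order of $C$ applied to the pair $(e^d_j, f^{d-1}_\ell)$ yields precisely the desired relation $c_\ell\bm_{d-1}(f^{d-1}_\ell)=\sum c_i\bm_{d-1}(f^{d-1}_i)$.

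Next, condition~(2) for $\sk_{d-1}(C)$ concerns basis elements $e^{d-1}_\ell\in\Omega_{d-1}$ and their boundaries $(\Omega_{e^{d-1}_\ell})_{d-2}=\bd(e^{d-1}_\ell)$ — but $C_{e^{d-1}_\ell}$ in $\sk_{d-1}(C)$ is literally the same subcomplex $C_{e^{d-1}_\ell}$ as in $C$ (skeletonizing does not touch modules in degrees $\le d-1$), so this condition is simply inherited verbatim from the regularity of $C$. That disposes of both conditions, so $\sk_{d-1}(C)$ is regular. For the totally regular addendum: the subcomplexes $C_{e^\nu_i}$ of $\sk_{d-1}(C)$ are exactly the subcomplexes $C_{e^\nu_i}$ of $C$ with $\nu\le d-1$ — a proper subfamily of those for $C$ — hence if all of the latter are acyclic, so are all of the former, giving total regularity of $\sk_{d-1}(C)$.

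The main obstacle I anticipate is the bookkeeping in condition~(1): carefully justifying that when $\bd(f^{d-1}_\ell)$ is covered by \emph{all} earlier elements of the concatenated ordering of $\Omega_{d-1}$, it is already covered by the earlier elements lying in the same block $\bd(e^d_j)$, so that one may legitimately invoke condition~(2) of $C$'s regular order for the pair $(e^d_j,f^{d-1}_\ell)$ rather than being stuck with a relation mixing boundaries from several blocks. This should follow because $C_{e^d_j}$ is pure of order $d$ and its $(d-1)$-skeleton shelling is exactly the one recorded in Remark~\ref{rem:specialordering}, so the covering condition ``$\bd(f)\subset\bigcup_{\text{earlier}}\bd(\cdot)$'' within $\bd(e^d_j)$ is detected by that shelling alone; but making this precise — in particular ruling out that an element of some later-or-other block is essential to the covering — is the one place where a genuine argument, rather than a citation, is needed.
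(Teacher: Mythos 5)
Your proposal is correct and follows essentially the same route as the paper: shellability of the skeleton is cited from Lemma~\ref{lem:skeletonshell}, condition~(2) and acyclicity of the subcomplexes are inherited verbatim, and condition~(1) is reduced to the second regularity condition of $C$ for the pair consisting of $e^{d}_{j}$ and the element in question. The one step you flag as needing a genuine argument is closed in the paper exactly as you anticipate, via the shelling identity $\bigl(\bigcup_{i=1}^{\ell}\Omega_{e^{d}_{i}}\bigr)\cap\Omega_{e^{d}_{\ell+1}}=\bigcup_{i=1}^{s}\Omega_{f_{i}}$, which forces $\bd(h_{k})\cap\bigl(\bigcup_{i=1}^{r}\bd(e_{i})\bigr)\subset\bigcup_{i=1}^{s}\bd(f_{i})$, so the covering is already achieved within the block and the coefficients on the foreign $e_{i}$ may be taken to be zero.
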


\begin{proof}
	As $C$ is a pure chain complex we have 
	$\Gamma=\Omega_{d}=\{e^{d}_{1},\ldots,e^{d}_{k_{d}}\}$ in a regular order. 
	
	The $(d-1)$-skeleton $\sk_{d-1}(C)$ is a pure subcomplex of $C$ 
	of order $(d-1)$ whose basis is 
	$\Omega\setminus\Omega_{d}$. We have to show that 
	$\widehat\Gamma:=\Omega_{d-1}=\bigcup_{i=1}^{k_{d}}\bd(e^{d}_{i})$ 
	has a regular order. 
	
	From Lemma~\ref{lem:skeletonshell} we know that the subcomplex 
	$\sk_{d-1}(C)$ is shellable. 
	Furthermore, $\sk_{d-1}(C)$ 
	satisfies the second regularity condition as this property 
	transmits from $C$. So, we have only to check the first regularity 
	condition which we will do by induction. 
	
	\medskip
	At first, we consider the subcomplex of $\sk_{d-1}(C)$ 
	which is generated by 
	$\widehat\Omega_{e^{d}_{1}}:=\Omega_{e^{d}_{1}}\setminus\{e^{d}_{1}\}$. 
	We have $(\widehat\Omega_{e^{d}_{1}})_{d-1}=\bd(e^{d}_{1})$. 
	As $C$ is regular there exists a shelling of $\bd(e^{d}_{1})$ 
	so that the second regularity condition is fulfilled. Hence, 
	the elements in $(\widehat\Omega_{e^{d}_{1}})_{d-1}$ satisfy 
	the first regularity condition, \ie the chosen shelling of 
	$\bd(e^{d}_{1})$ is a regular order. 
	If $k_{d}=1$, we are done by now. 
	
	\medskip
	If $k_{d}>1$, let $1\leq \ell<k_{d}$. We assume: the basis elements 
	in $\bigcup_{i=1}^{\ell}(\Omega_{e^{d}_{i}})_{d-1}$ which may be 
	ordered in a shelling satisfy the first regularity condition (\ie 
	the subcomplex generated by 
	$\bigcup_{i=1}^{\ell}(\Omega_{e^{d}_{i}}\setminus\{e^{d}_{i}\})$ 
	is regular). 
	We want to show that this also holds for 
	$\bigl(\bigcup_{i=1}^{\ell}(\Omega_{e^{d}_{i}})_{d-1}\bigr)
	\cup(\Omega_{e^{d}_{\ell+1}})_{d-1}$. 
	
	If $e^{d}_{\ell+1}$ is precritical, then 
	$\bigl(\bigcup_{i=1}^{\ell}(\Omega_{e^{d}_{i}})_{d-1}\bigr)
	\cup(\Omega_{e^{d}_{\ell+1}})_{d-1}
	=\bigcup_{i=1}^{\ell}(\Omega_{e^{d}_{i}})_{d-1}$, and there is 
	nothing left to do. So we assume that $e^{d}_{\ell+1}$ is not 
	precritical. 	
	Let 
	\begin{align*}
		\bigcup_{i=1}^{\ell}(\Omega_{e^{d}_{i}})_{d-1}
		&=\{e_{1},\ldots,e_{r},f_{1},\ldots,f_{s}\} \\
		\text{and}\quad
		(\Omega_{e^{d}_{\ell+1}})_{d-1}
		&=\{f_{1},\ldots,f_{s},h_{1},\ldots,h_{t}\}. 
	\end{align*}
	We have $t\geq 1$ and, because of shellability, $s\geq 1$. 
	By assumption, the first regularity condition is fulfilled by the 
	elements of $\bigcup_{i=1}^{\ell}(\Omega_{e^{d}_{i}})_{d-1}$,
	so we have only to consider $h_{1},\ldots,h_{t}$. 
	
	Let there be some $1\leq k\leq t$ so that
	\begin{equation*}
		\bd(h_{k})\subset
		\Bigl(\bigcup_{i=1}^{r}\bd(e_{i})\Bigr)
		\cup
		\Bigl(\bigcup_{i=1}^{s}\bd(f_{i})\Bigr)
		\cup
		\underbrace{
		\Bigl(\bigcup_{i=1}^{k-1}\bd(h_{i})\Bigr).
		}_{=\emptyset \text{ if $k=1$}}
	\end{equation*}
	As the chain complex $C$ is shellable we know that
	\begin{equation*}
		\Bigl(\bigcup_{i=1}^{\ell}\Omega_{e^{d}_{i}}\Bigr)
		\cap\Omega_{e^{d}_{\ell+1}}
		=\bigcup_{i=1}^{s}\Omega_{f_{i}}.
	\end{equation*}
	Therefore, 
	$\bd(h_{k})\cap\bigl(\bigcup_{i=1}^{r}\bd(e_{i})\bigr)
	\subset\bigl(\bigcup_{i=1}^{s}\bd(f_{i})\bigr)$, hence:
	\begin{equation*}
		\bd(h_{k})\subset
		\Bigl(\bigcup_{i=1}^{s}\bd(f_{i})\Bigr)
		\cup
		\Bigl(\bigcup_{i=1}^{k-1}\bd(h_{i})\Bigr).
	\end{equation*}
	As the second regularity condition holds for the elements in 
	$(\Omega_{e^{d}_{\ell+1}})_{d-1}$
	there exist elements
	$a_{1},\ldots,a_{s},b_{1},\ldots,b_{k}\in R$ with $b_{k}\neq 0$ so 
	that
	\begin{align*}
		b_{k}\bm_{d-1}(h_{k})
		&=\sum_{i=1}^{s}a_{i}\bm_{d-1}(f_{i})
		+\sum_{j=1}^{k-1}b_{j}\bm_{d-1}(h_{j}) \\
		&=\sum_{i=1}^{r}0\cdot\bm_{d-1}(e_{i})
		+\sum_{i=1}^{s}a_{i}\bm_{d-1}(f_{i})
		+\sum_{j=1}^{k-1}b_{j}\bm_{d-1}(h_{j})
	\end{align*}
	Hence, the first regularity condition is fulfilled, so 
	$\sk_{d-1}(C)$ is a regular chain complex. 
	
	Additionally, every subcomplex $C_{e^{\nu}_{i}}$ of $\sk_{d-1}(C)$
	is acyclic if this holds for $C$. So, if $C$ is totally regular,
	then $\sk_{d-1}(C)$ is totally regular, too.
\end{proof}

\begin{thm}
	\label{thm:skeletonreg}
	Let $(C,\Omega)$ be a regular chain complex, finite of
	order $d\geq 1$. Then the $(d-1)$-skeleton $\sk_{d-1}(C)$ of $C$ is
	also regular. 
	If $(C,\Omega)$ is even totally regular, 
	$\sk_{d-1}(C)$ is also totally regular. 
\end{thm}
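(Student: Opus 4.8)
The plan is to follow the pattern of the proof of Theorem~\ref{thm:skeletonshell}: the pure case is already settled in Lemma~\ref{lem:skeletonreg}, and shellability of $\sk_{d-1}(C)$ in the general case is Theorem~\ref{thm:skeletonshell}, so only the two regularity conditions --- and, in the totally regular case, acyclicity of all subcomplexes --- have to be checked.

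First I would fix a monotonically descending regular order of $\Gamma$. Since $s(g)$ is non-increasing along such an order, the elements of $\Omega_{d}=\{e^{d}_{1},\ldots,e^{d}_{k_{d}}\}$ come first, then the maximal basis elements of dimension $d-1$, say $\Gamma\cap\Omega_{d-1}=\{g^{d-1}_{1},\ldots,g^{d-1}_{m}\}$, and then the maximal basis elements of lower dimension. Passing to the $(d-1)$-skeleton, the new set of maximal basis elements $\widehat\Gamma$ is obtained from $\Gamma$ by removing $\Omega_{d}$ and adjoining $\bigcup_{i=1}^{k_{d}}\bd(e^{d}_{i})$; in particular the top chain module $(\sk_{d-1}(C))_{d-1}$ has basis $\Omega_{d-1}=(\bigcup_{i=1}^{k_{d}}\bd(e^{d}_{i}))\cup\{g^{d-1}_{1},\ldots,g^{d-1}_{m}\}$. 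I would order $\widehat\Gamma$ by placing first the elements of $\bigcup_{i=1}^{k_{d}}\bd(e^{d}_{i})$, in the regular order obtained from Lemma~\ref{lem:skeletonreg} applied to the pure subcomplex $C'$ of $C$ generated by $\bigcup_{i=1}^{k_{d}}\Omega_{e^{d}_{i}}$ --- this $C'$ is pure and regular of order $d$, its maximal basis elements being exactly $\Omega_{d}$, because the regularity conditions of $C$ restrict to it --- then the elements $g^{d-1}_{1},\ldots,g^{d-1}_{m}$, and finally the remaining maximal basis elements of $C$ in their original order; the bases $\Omega_{\nu}$ with $\nu\le d-2$ are kept exactly as in Remark~\ref{rem:specialordering}. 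Observe that the order Lemma~\ref{lem:skeletonreg} produces on $\bigcup_{i=1}^{k_{d}}\bd(e^{d}_{i})$ is precisely the restriction of the order of $\Omega_{d-1}$ prescribed by Remark~\ref{rem:specialordering}, so the two recipes are coherent, and by (the proof of) Theorem~\ref{thm:skeletonshell} this order on $\widehat\Gamma$ is a shelling.

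It remains to verify the two regularity conditions for this order. The second condition transmits verbatim from $C$: it only refers to the maps $\bm_{\nu-1}$ and to shellings of boundaries $\bd(e^{\nu}_{\ell})$ with $\nu-1\le d-2$, all of which live inside $\sk_{d-1}(C)$ unchanged. For the first condition I would split $\widehat\Gamma$ into three blocks. If $e^{d-1}_{\ell}\in\bigcup_{i=1}^{k_{d}}\bd(e^{d}_{i})$, then the part of $\widehat\Gamma$ preceding it also lies in $\bigcup_{i=1}^{k_{d}}\bd(e^{d}_{i})$, so the required implication is exactly the first regularity condition for $\sk_{d-1}(C')$ furnished by Lemma~\ref{lem:skeletonreg}. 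If the element is $g^{d-1}_{j}=e^{d-1}_{m_{d-1}+j}$, the implication ``$\bd(g^{d-1}_{j})\subset\bigcup_{i<m_{d-1}+j}\bd(e^{d-1}_{i})$ forces $g^{d-1}_{j}$ precritical'' is word for word the first regularity condition of $C$ at level $d-1$, since neither the order of $\Omega_{d-1}$ nor $\bm_{d-1}$ has changed. Finally, for a maximal basis element of dimension $\nu<d-1$ the maximal basis elements of $\sk_{d-1}(C)$ and of $C$ at that level coincide and the first regularity condition is, again, literally the one already holding in $C$. Hence $\widehat\Gamma$ has a regular order, so $\sk_{d-1}(C)$ is regular.

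For the totally regular statement, note that each subcomplex $C_{e^{\nu}_{i}}$ of $\sk_{d-1}(C)$ has $\nu\le d-1$ and is built by descending from $e^{\nu}_{i}$, hence never meets $\Omega_{d}$ and coincides with the subcomplex $C_{e^{\nu}_{i}}$ of $C$; if $C$ is totally regular these are all acyclic, so $\sk_{d-1}(C)$ is totally regular. The step I expect to be the real point is the bookkeeping of the second paragraph --- arranging $\widehat\Gamma$ so that the regular order of the boundary block extracted from Lemma~\ref{lem:skeletonreg} agrees with the order induced on $\Omega_{d-1}$ by Remark~\ref{rem:specialordering}, which is what lets the three blocks be handled separately --- rather than any new homological content, all of which is already carried by Lemma~\ref{lem:skeletonreg}.
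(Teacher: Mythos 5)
Your proposal is correct and follows essentially the same route as the paper: the pure subcomplex $\widehat C$ generated by $\bigcup_{i=1}^{k_{d}}\Omega_{e^{d}_{i}}$ is handled by Lemma~\ref{lem:skeletonreg}, the first regularity condition for the elements of $\Gamma\cap\Omega_{d-1}$ is inherited verbatim from $C$, the second condition transmits unchanged, and acyclicity of the subcomplexes $C_{e^{\nu}_{i}}$ gives the totally regular case. Your write-up is somewhat more explicit than the paper's about the coherence of the ordering on $\Omega_{d-1}$ with Remark~\ref{rem:specialordering}, but this is a matter of detail, not of approach.
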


\begin{proof}
	Lemma~\ref{lem:skeletonreg} deals with this statement for pure
	regular chain complexes so there is only the non-pure case to do.
	
	It is clear that the $(d-1)$-skeleton $\sk_{d-1}(C)$ of $C$ 
	satisfies the second regularity condition so we have to prove only 
	the first one. 

	Let $\Gamma := 
	\set{e\in\Omega}{e\not\in\bd(f) \text{ for all }f\in\Omega}$
	be the subset of $\Omega$ of all 
	maximal basis elements. 
	Let the elements of $\Gamma$ be ordered in a regular order. 
	
	Let $\Omega_{d}=\{e^{d}_{1},\ldots,e^{d}_{k_{d}}\}\subset\Gamma$, 
	and 
	$\Gamma\cap\Omega_{d-1}=\{g^{d-1}_{1},\ldots,g^{d-1}_{m}\}$. 
	As a regular order is always monotonically descending, in 
	the chosen regular order of $\Gamma$ the elements of 
	$\Omega_{d}$ come first, followed by all elements of 
	$\Gamma\cap\Omega_{d-1}$. 
	The basis of the chain module 
	$\bigl(\sk_{d-1}(C)\bigr)_{d-1}$ is
	\begin{equation*}
		\Omega_{d-1}=\Bigl(\bigcup_{i=1}^{k_{d}}\bd(e^{d}_{i})\Bigr)
		\cup\{g^{d-1}_{1},\ldots,g^{d-1}_{m}\}.
	\end{equation*}
	The subcomplex $\widehat C$ of $C$ with basis 
	$\bigcup_{i=1}^{k_{d}}\Omega_{e^{d}_{i}}$ is pure and regular. 
	The basis of its chain module $\widehat C_{d-1}$ is 
	$\widehat\Omega_{d-1}=\bigcup_{i=1}^{k_{d}}\bd(e^{d}_{i})$. 
	
	By Lemma~\ref{lem:skeletonreg}, the elements in $\widehat\Omega_{d-1}$ 
	satisfy the first regularity condition. The same holds for 
	$\{g^{d-1}_{1},\ldots,g^{d-1}_{m}\}$ because $C$ is regular. Hence, 
	the $(d-1)$-skeleton $\sk_{d-1}(C)$ of $C$ is regular. 
	
	If $C$ is totally regular, then any subcomplex $C_{e^{\nu}_{i}}$ of
	$\sk_{d-1}(C)\subset C$ is acyclic, so $\sk_{d-1}(C)$ is totally 
	regular. 
\end{proof}

The next two corollaries follow directly from 
Theorem~\ref{thm:skeletonreg}. 

\begin{cor}
	Let $(C,\Omega)$ be a regular chain complex, finite of order $d$.
	For $0\leq i\leq d$, every $i$-skeleton $\sk_{i}(C)$ of $C$ is
	regular.
\end{cor}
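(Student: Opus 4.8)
The plan is to deduce this from Theorem~\ref{thm:skeletonreg} by a straightforward downward induction on $i$, stripping off one dimension at a time. If $d=0$ the statement is vacuous (only $i=0$ occurs and $\sk_{0}(C)=C$), so I would assume $d\geq 1$. The base case of the induction is $i=d$: here $\sk_{d}(C)=C$, which is regular by hypothesis.

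For the inductive step I would assume that $\sk_{j}(C)$ is regular for some $1\leq j\leq d$ and aim to show $\sk_{j-1}(C)$ is regular. The one observation that needs to be recorded is that skeletons compose in the obvious way: $\sk_{j-1}\bigl(\sk_{j}(C)\bigr)=\sk_{j-1}(C)$. This is immediate from Definition~\ref{defn:skeleton}, since both complexes have chain module $C_{\nu}$ in degrees $0\leq\nu\leq j-1$, vanish above, and carry the boundary maps of $C$ restricted accordingly. So it suffices to apply Theorem~\ref{thm:skeletonreg} to the chain complex $\sk_{j}(C)$. This is legitimate because $\sk_{j}(C)$ is regular by the inductive hypothesis and is finite of order $j\geq 1$ (one has $\bigl(\sk_{j}(C)\bigr)_{j}=C_{j}\neq 0$, as $C$ being a shellable chain complex of order $d$ forces $C_{\nu}\neq 0$ for all $0\leq\nu\leq d$). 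The theorem then gives that $\sk_{j-1}\bigl(\sk_{j}(C)\bigr)=\sk_{j-1}(C)$ is regular, closing the induction.

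Iterating from $i=d$ downward, Theorem~\ref{thm:skeletonreg} carries regularity all the way to $\sk_{0}(C)$; alternatively one may stop the induction at $\sk_{1}(C)$ and invoke the fact, noted after the definition of regular chain complexes, that every finite chain complex of order $0$ is (totally, hence) regular. I do not expect any genuine obstacle here: the only points requiring a line of justification are the composition identity $\sk_{j-1}\bigl(\sk_{j}(C)\bigr)=\sk_{j-1}(C)$ and the verification that each intermediate skeleton still satisfies the hypotheses of Theorem~\ref{thm:skeletonreg}; the rest is bookkeeping.
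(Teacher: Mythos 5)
Your proposal is correct and matches the paper's intent exactly: the paper states that this corollary ``follows directly from Theorem~\ref{thm:skeletonreg}'', i.e.\ by the very iteration you describe. The points you flag for justification (the identity $\sk_{j-1}\bigl(\sk_{j}(C)\bigr)=\sk_{j-1}(C)$ and the fact that each intermediate skeleton is again a finite regular complex of the right order) are indeed the only things to check, and your checks are sound.
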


\begin{cor}
  \label{cor:totallyregularskeleton}
	Let $(C,\Omega)$ be a totally regular chain complex, finite of order $d$.
	For $0\leq i\leq d$, every $i$-skeleton $\sk_{i}(C)$ of $C$ is
	totally regular.
\end{cor}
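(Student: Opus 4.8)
The plan is to deduce the statement from Theorem~\ref{thm:skeletonreg} by a downward induction on the order, the only auxiliary fact needed being that skeletons compose: for $0\le i\le j\le d$ one has $\sk_i(\sk_j(C))=\sk_i(C)$. This is immediate from Definition~\ref{defn:skeleton}, since both sides are the subcomplex of $C$ whose chain modules are $C_\nu$ for $\nu\le i$ and $0$ otherwise, carrying the canonical basis $\bigcup_{\nu=0}^{i}\Omega_\nu$. In particular $\sk_d(C)=C$, so the case $i=d$ is just the hypothesis and nothing has to be shown there.

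First I would fix $i$ with $0\le i<d$ and set $C^{(d)}:=C$, and then inductively $C^{(j-1)}:=\sk_{j-1}\bigl(C^{(j)}\bigr)$ for $j=d,d-1,\ldots,i+1$. By the composition identity, $C^{(j)}=\sk_j(C)$ for every $j$ in this range; moreover, since $(C,\Omega)$ is totally regular (hence shellable) of order $d$, all its chain modules $C_\nu$ with $0\le\nu\le d$ are nonzero, so each $C^{(j)}$ is a chain complex of order exactly $j$, and $j\ge i+1\ge 1$. Next I would apply Theorem~\ref{thm:skeletonreg} at each step: if $C^{(j)}$ is totally regular and finite of order $j\ge 1$, then $C^{(j-1)}=\sk_{j-1}(C^{(j)})$ is totally regular as well. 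Starting from $C^{(d)}=C$, which is totally regular by assumption, a finite induction over the $d-i$ steps then yields that $C^{(i)}=\sk_i(C)$ is totally regular, which is the claim. (For $i=0$ one may alternatively simply note that a finite chain complex of order $0$ satisfies all regularity conditions trivially.)

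There is essentially no hard step here: the mathematical content is entirely contained in Theorem~\ref{thm:skeletonreg}, and the corollary is a bookkeeping iteration of it. The one point that needs a moment of care is the order-$0$ boundary of the induction, i.e.\ making sure we never invoke Theorem~\ref{thm:skeletonreg} on a complex of order $0$ (which is why the induction stops at $C^{(i)}$ and does not attempt to produce $C^{(i-1)}$), together with the verification that each intermediate $\sk_j(C)$ genuinely has order $j$ rather than something smaller --- for this one uses that in a shellable, hence totally regular, chain complex every chain module up to the order of the complex is nonzero.
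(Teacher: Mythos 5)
Your proof is correct and is exactly the argument the paper intends: the corollary is stated as following directly from Theorem~\ref{thm:skeletonreg}, i.e.\ by iterating the $(d-1)$-skeleton statement downward, which is what your induction does. The extra care you take (the identity $\sk_i(\sk_j(C))=\sk_i(C)$ and the check that each intermediate skeleton has order exactly $j$ because shellable complexes have nonzero chain modules up to their order) only makes explicit what the paper leaves implicit.
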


\subsection{Homology of Pure Totally Regular Chain Complexes}
\label{ssec:homologypure}
We give a description of the homology of totally regular chain complexes.
We will start with pure complexes and 
compute the homology for a special case. But first we need 
some facts about reduced homology of totally regular chain complexes. 

\begin{lemma}
  Let $(C,\Omega)$ be a totally regular chain complex of order $d$. 
  There is no element $x\in C_{1}$ so that $\anzahl{\bd(x)=1}$. 
\end{lemma}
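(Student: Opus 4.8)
The plan is to reduce at once to the case $d=1$, and then to argue by induction on the largest basis index occurring in $x$, feeding the non-triviality of the hypothesis into the \emph{first} regularity condition.

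\emph{Reduction and two standing facts.} By Corollary~\ref{cor:totallyregularskeleton} the $1$-skeleton $\sk_{1}(C)$ is again totally regular; it has the same modules $C_{1},C_{0}$, the same $\bm_{1}$, hence the same $\bd(x)$ for $x\in C_{1}$, while now every element of $\Omega_{1}$ is maximal, i.e. lies in $\Gamma$. If $\sk_{1}(C)$ has order $0$ then $C_{1}=0$ and there is nothing to prove, so I may assume $C=\sk_{1}(C)$ is of order $1$ and that $\Omega_{1}=\{e^{1}_{1},\ldots,e^{1}_{k_{1}}\}$ is indexed so that this is the $\Omega_{1}$-part of a regular order of $\Gamma$. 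I record: (i) each subcomplex $C_{e^{1}_{i}}$ is acyclic and is a pure chain complex of order $1$ with $(C_{e^{1}_{i}})_{1}=\erz{e^{1}_{i}}$, so Lemma~\ref{lem:anzahlcnull} gives $\anzahl{\bd(e^{1}_{i})}=2$ (in particular $\bm_{1}e^{1}_{i}\neq 0$); (ii) since $R$ is a domain, $\supp(a\bm_{1}e^{1}_{i})=\bd(e^{1}_{i})$ for $a\neq 0$ and $\supp(\bm_{1}x)\subseteq\bigcup_{i\in\supp(x)}\bd(e^{1}_{i})$.

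\emph{The induction.} Suppose, for contradiction, that some $x=\sum_{i}a_{i}e^{1}_{i}\in C_{1}$ has $\anzahl{\bd(x)}=1$, say $\bm_{1}x=\lambda e^{0}_{j}$ with $\lambda\neq 0$; choose such an $x$ with $N:=\max\{i:a_{i}\neq 0\}$ minimal. If $N=1$ then $x=a_{1}e^{1}_{1}$, so by (i) $\bd(x)=\bd(e^{1}_{1})$ has two elements, a contradiction. So $\ell:=N\geq 2$, and I apply the first regularity condition to $e^{1}_{\ell}\in\Gamma$. If $\bd(e^{1}_{\ell})\subseteq\bigcup_{i<\ell}\bd(e^{1}_{i})$, then $e^{1}_{\ell}$ is precritical, $c_{\ell}\bm_{1}e^{1}_{\ell}=\sum_{i<\ell}c_{i}\bm_{1}e^{1}_{i}$ with $c_{\ell}\neq 0$; setting $z:=c_{\ell}e^{1}_{\ell}-\sum_{i<\ell}c_{i}e^{1}_{i}\in\ker\bm_{1}$ and $y:=c_{\ell}x-a_{\ell}z=\sum_{i<\ell}(c_{\ell}a_{i}+a_{\ell}c_{i})e^{1}_{i}$, one gets $\bm_{1}y=c_{\ell}\lambda e^{0}_{j}\neq 0$ with $\anzahl{\bd(y)}=1$ and $\max\supp(y)<\ell$, contradicting minimality of $N$. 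Otherwise pick $e^{0}_{p}\in\bd(e^{1}_{\ell})\setminus\bigcup_{i<\ell}\bd(e^{1}_{i})$ and write $\bm_{1}e^{1}_{\ell}=\alpha e^{0}_{p}+\beta e^{0}_{q}$ with $\alpha,\beta\neq 0$ and $e^{0}_{q}\neq e^{0}_{p}$ (using (i)). Then $x':=x-a_{\ell}e^{1}_{\ell}$ has $\bm_{1}x'=\lambda e^{0}_{j}-a_{\ell}(\alpha e^{0}_{p}+\beta e^{0}_{q})$, which is nonzero since otherwise $\lambda e^{0}_{j}=a_{\ell}\bm_{1}e^{1}_{\ell}$ would have two-element support. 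By (ii), $e^{0}_{p}\notin\supp(\bm_{1}x')$, so comparing coefficients of $e^{0}_{p}$ forces $e^{0}_{j}=e^{0}_{p}$ and $\lambda=a_{\ell}\alpha$, whence $\bm_{1}x'=-a_{\ell}\beta e^{0}_{q}$; thus $\anzahl{\bd(x')}=1$ and $\max\supp(x')<\ell$, again contradicting minimality. Both cases being excluded, no such $x$ exists.

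I expect the case split on $e^{1}_{\ell}$ to be the real content. Fact (i) by itself---each individual $\bm_{1}e^{1}_{i}$ having exactly two nonzero coefficients---does not prevent an $R$-linear combination from collapsing onto a single basis element: for instance, over $\Z$ with $\bm_{1}e^{1}_{1}=2e^{0}_{1}+3e^{0}_{2}$ and $\bm_{1}e^{1}_{2}=3e^{0}_{1}+2e^{0}_{2}$ one has $\bm_{1}(3e^{1}_{1}-2e^{1}_{2})=5e^{0}_{2}$, and indeed such a complex fails to be regular. It is precisely the first regularity condition (the forced precriticality when a boundary is covered by earlier ones) that rules this out; the slightly delicate bookkeeping is to engineer the two reduction steps---$y$ in the ``covered'' case, $x'$ in the other---so that the induction parameter $\max\supp$ drops strictly each time while the properties $\bm_{1}(\cdot)\neq 0$ and $\anzahl{\bd(\cdot)}=1$ are preserved.
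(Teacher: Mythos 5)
Your proof is correct and follows essentially the same route as the paper: reduce to the (totally regular) $1$-skeleton, use Lemma~\ref{lem:anzahlcnull} to get $\anzahl{\bd(e^{1}_{i})}=2$, and descend on the largest index in $\supp(x)$ by splitting on whether $\bd(e^{1}_{\ell})$ is covered by earlier boundaries, invoking the first regularity condition in the covered case. Your handling of the uncovered case (explicit coefficient comparison at $e^{0}_{p}$) is a slightly more careful rendering of the paper's set-difference argument, but it is the same idea.
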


\begin{proof}
  Let $\Omega_{1}:=\{e^{1}_{1},\ldots,e^{1}_{k_{1}}\}$
  and $\Omega_{0}:=\{e^{0}_{1},\ldots,e^{0}_{k_{0}}\}$ 
  be the bases of the chain modules $C_{1}$ resp. $C_{0}$.
  Because $C$ is totally regular, these orderings of $\Omega_{1}$ 
  and $\Omega_{0}$ are shellings and every subcomplex $C_{e^{1}_{i}}$ 
  is acyclic for $1\leq i\leq k_{1}$. Then we know by 
  Lemma~\ref{lem:anzahlcnull} that $\anzahl{\bd(e^{1}_{i})=2}$ 
  for $1\leq i\leq k_{1}$. 

  We assume that an element $x\in C_{1}$ exists so that 
  $\anzahl{\bd(x)=1}$. 
  Let $i_{0}:=\max\set{1\leq i\leq k_{1}}{a_{i}\neq 0}$, so we get:
  \begin{equation*}
    x=\sum_{i=1}^{i_{0}-1}a_{i}e^{1}_{i} + a_{i_{0}}e^{1}_{i_{0}}.
  \end{equation*}
  As the $1$-skeleton of $C$ is shellable  
  $\bd(e^{1}_{i_{0}})\cap \bigl(\bigcup_{i=1}^{i_{0}-1}\bd(e^{1}_{i})\bigr)\neq\emptyset$. 
  So we distinguish two cases:
  \begin{itemize}
    \item $\bd(e^{1}_{i_{0}})\subset \bigl(\bigcup_{i=1}^{i_{0}-1}\bd(e^{1}_{i})\bigr)$.
      Because the $1$-skeleton of C is also totally regular due to 
      Lemma~\ref{cor:totallyregularskeleton} there are $\lambda_{i}$ for 
      $1\leq i\leq i_{0}$, $\lambda_{i_{0}}\neq 0$, so that
      \begin{equation*}
	\lambda_{i_{0}}\bm_{1}(e^{1}_{i_{0}}) = \sum_{i=1}^{i_{0}-1}\lambda_{i}\bm_{1}(e^{1}_{i}).
      \end{equation*}
      Therefore, we get:
      \begin{align*}
	\bm_{1}(\lambda_{i_{0}}x) &= 
	  \sum_{i=1}^{i_{0}-1}a_{i}\lambda_{i_{0}}\bm_{1}(e^{1}_{i}) + a_{i_{0}}\lambda_{i_{0}}\bm_{1}(e^{1}_{i_{0}}) \\
	&= \sum_{i=1}^{i_{0}-1}\bigl(a_{i}\lambda_{i_{0}} + a_{i_{0}}\lambda_{i}\bigr)\bm_{1}(e^{1}_{i}),
      \end{align*}
      and $\anzahl\bd(\lambda_{i_{0}}x)=1$ because of $\bm_{1}(\lambda_{i_{0}}x) = \lambda_{i_{0}}\bm_{1}(x)$.

    \item $\anzahl{\bigl(\bd(e^{1}_{i_{0}})\cap \bigl(\bigcup_{i=1}^{i_{0}-1}\bd(e^{1}_{i})\bigr)\bigr)}=1$.
       Then $\bd(x) = \bd(e^{1}_{i_{0}}) \setminus \bd\bigl(\sum_{i=1}^{i_{0}-1}a_{i}e^{1}_{i}\bigr)$.
      So we get $\anzahl{\bd\bigl(\sum_{i=1}^{i_{0}-1}a_{i}e^{1}_{i}\bigr)} =1$. 
  \end{itemize}
  In both cases, we get an element of $C_{1}$ which is a linear combination of $e^{1}_{1},\ldots,e^{1}_{i_{0}-1}$
  having only one element in its boundary. Iterating this way delivers a contradiction as 
  $\anzahl{\bd(e^{1}_{1})}=2$. 
\end{proof}

By Lemma~\ref{lem:epsilon} there is a $R$-linear mapping 
$\epsilon\colon C_{0}\rightarrow R$ with $\epsilon\circ\bm_{1}=0$
and $\epsilon(e^{0}_{i})\neq 0$ for all $e^{0}_{i}\in\Omega_{0}$ 
for any totally regular chain complex $(C,\Omega)$. 
Hence, for totally regular chain complexes reduced and nonreduced 
homology are different: 
$H_{0}(C)\cong\widetilde H_{0}(C)\oplus R$. 
We need this fact in the proof of the following theorem. 

\begin{thm}
	\label{thm:hompurenoncrit}
	Let $(C,\Omega)$ be a pure totally regular chain complex of 
	order~$d$. Let there be only noncritical basis elements in 
	$\Omega_{d}$. Then, the chain complex $(C,\Omega)$ is acyclic. 
\end{thm}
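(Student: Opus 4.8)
The plan is a double induction. First I would dispose of the case $d=0$: by Remark~\ref{rem:complexoforderzero} the module $C_{0}$ then carries exactly $k_{0}-1$ critical basis elements, so the hypothesis forces $k_{0}=1$, whence $C_{0}\cong R$ and $C$ is acyclic. So assume $d\ge1$. Since $C$ is pure we have $\Gamma=\Omega_{d}$, and since every element of $\Omega_{d}$ is noncritical, in particular not precritical, Theorem~\ref{thm:precritical} applied with $n=0$ gives $H_{d}(C)=\ker\bm_{d}=0$. It then remains to show $H_{\nu}(C)=0$ for $1\le\nu\le d-1$ and $H_{0}(C)\cong R$, i.e.\ that $C$ is acyclic, and this I would prove by induction on the order~$d$ (the case $d=0$ being done, and the case $d=1$ below using Lemma~\ref{lem:anzahlcnull} as an extra input).

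Fix a regular order $\Omega_{d}=\{e^{d}_{1},\dots,e^{d}_{k_{d}}\}$ of $\Gamma$ and put $C^{(\ell)}:=\bigcup_{i=1}^{\ell}C_{e^{d}_{i}}$ for $1\le\ell\le k_{d}$; by purity $C^{(k_{d})}=C$, and one verifies that each $C^{(\ell)}$ is again a pure, totally regular chain complex of order~$d$ whose top basis $\{e^{d}_{1},\dots,e^{d}_{\ell}\}$ still consists of noncritical elements (all the shelling and regularity data are inherited by passing to an initial segment of the regular order). The inner induction is on~$\ell$ and shows that every $C^{(\ell)}$ is acyclic; the start $\ell=1$ holds since $C^{(1)}=C_{e^{d}_{1}}$ is one of the subcomplexes which are acyclic because $C$ is totally regular. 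For the inductive step write $C^{(\ell)}=A\cup B$ with $A:=C^{(\ell-1)}$ and $B:=C_{e^{d}_{\ell}}$, so $D:=A\cap B$ is the subcomplex generated by $\Omega_{e^{d}_{\ell}}\cap\bigcup_{i<\ell}\Omega_{e^{d}_{i}}$; by the first shelling condition this generates a pure chain complex of order~$d-1$, which by the remark on intersections following the definition of the complexes $C_{e^{\mu}_{j}}$ equals $\bigcup_{j=1}^{s}C_{f_{j}}$ for $\{f_{1},\dots,f_{s}\}=\bigl(\Omega_{e^{d}_{\ell}}\cap\bigcup_{i<\ell}\Omega_{e^{d}_{i}}\bigr)_{d-1}$, $s\ge1$. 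Since the decomposition is compatible with the bases, the short exact sequence $0\to D\to A\oplus B\to C^{(\ell)}\to0$ of chain complexes yields a Mayer--Vietoris long exact sequence relating $H_{\bullet}(D)$, $H_{\bullet}(A)\oplus H_{\bullet}(B)$ and $H_{\bullet}(C^{(\ell)})$; as $A$ is acyclic by the inner hypothesis and $B$ by total regularity, once $D$ is known acyclic this forces $H_{\nu}(C^{(\ell)})=0$ for $\nu\ge1$, and the reduced version together with the identity $H_{0}\cong\widetilde H_{0}\oplus R$ for totally regular complexes forces $H_{0}(C^{(\ell)})\cong R$.

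So the crux is the acyclicity of $D$. It is pure of order~$d-1$ and inherits total regularity from $C$ (its subcomplexes are among the $C_{e^{\mu}_{j}}$, hence acyclic; $\{f_{1},\dots,f_{s}\}$ is an initial segment of a shelling of $\bd(e^{d}_{\ell})$ in which these elements come first by the second shelling condition, and the corresponding order from the second regularity condition for $e^{d}_{\ell}$ restricts to a regular order of $D$). The essential point is that $\{f_{1},\dots,f_{s}\}$ contains no precritical element, equivalently $H_{d-1}(D)=0$ by Theorem~\ref{thm:precritical}: if $0\ne x\in\erz{f_{1},\dots,f_{s}}$ satisfied $\bm_{d-1}x=0$, then $x$ would be a cycle in the acyclic complex $B=C_{e^{d}_{\ell}}$ of order $d\ge2$, hence $x=r\,\bm_{d}e^{d}_{\ell}$ for some $r\in R$; but ``$e^{d}_{\ell}$ noncritical'' together with the first regularity condition gives $\bd(e^{d}_{\ell})\not\subset\bigcup_{i<\ell}\bd(e^{d}_{i})$, so $\supp(\bm_{d}e^{d}_{\ell})=\bd(e^{d}_{\ell})$ contains a basis element outside $\{f_{1},\dots,f_{s}\}$, and comparing coefficients (with $R$ an integral domain) forces $r=0$, $x=0$. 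Thus $D$ is a pure totally regular complex of order~$d-1$ with only noncritical top elements, so acyclic by the outer induction hypothesis; when $d=1$ one argues instead from Lemma~\ref{lem:anzahlcnull} applied to the acyclic order-one complex $C_{e^{1}_{\ell}}$, which then forces $s=1$ and $D\cong R$. Feeding this back completes the inner induction, so $C=C^{(k_{d})}$ is acyclic.

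I expect the main obstacle to lie not in the homological skeleton of the argument but in the bookkeeping around $H_{0}$ and the augmentations: one must check carefully that $A$, $D$ and $C^{(\ell)}$ inherit purity and total regularity, so that Lemma~\ref{lem:epsilon} and the splitting $H_{0}\cong\widetilde H_{0}\oplus R$ are genuinely available, and that the reduced Mayer--Vietoris sequence really applies in this abstract setting (it comes from the snake lemma applied to the short exact sequence above, with a single global augmentation restricted to all pieces). The only genuinely non-formal computation is the cycle argument for $D$, which is exactly where total regularity and the ``only noncritical'' hypothesis are used.
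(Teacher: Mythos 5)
Your proposal is correct and follows essentially the same route as the paper: induction on the order $d$, the decomposition $Q_\ell=\bigcup_{i\le\ell}C_{e^d_i}$ with intersection $P_\ell=Q_{\ell-1}\cap C_{e^d_\ell}$, verification that $P_\ell$ is pure, totally regular of order $d-1$ with no precritical top elements (handled separately for $d=1$ via Lemma~\ref{lem:anzahlcnull} and for $d\ge2$ via acyclicity of $C_{e^d_\ell}$ together with the first regularity condition), and the reduced Mayer--Vietoris sequence built from the augmentation of Lemma~\ref{lem:epsilon}. Your version of the $d\ge2$ step (every cycle in $\erz{f_1,\ldots,f_s}$ is a multiple of $\bm_d e^d_\ell$, whose support leaves $\{f_1,\ldots,f_s\}$) is a mild rephrasing of the paper's two-independent-cycles contradiction, not a different argument.
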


\begin{rem}
	The chain complex $(C,\Omega)$ above is pure, totally regular without 
	precritical basis elements. If 
	$\Omega_{d}=\{e^{d}_{1},\ldots,e^{d}_{k_{d}}\}$ is ordered in a regular order, 
	the first regularity condition delivers the following for such a chain complex:
	For any 
	$e^{d}_{\ell}\in\Omega_{d}$,
	$2\leq\ell\leq k_{d}$,
	there exists an element $e^{d-1}_{j_{\ell}}\in\bd(e^{d}_{\ell})$ 
	so that
	\begin{equation*}
		e^{d-1}_{j_{\ell}}\not\in\bigcup_{i=1}^{\ell-1}\bd(e^{d}_{i}).
	\end{equation*}
\end{rem}

\begin{proof}[Proof of Theorem~\ref{thm:hompurenoncrit}]
	We use induction to the order $d$. 
	
	\medskip
	If $d=0$, then $(C,\Omega)$ is a chain complex of order $0$ without 
	precritical basis elements in $\Omega_{0}$. Hence, 
	$\Omega_{0}=\{e^{0}_{1}\}$ and $C_{0}=\erz{e^{0}_{1}}$. Therefore, 
	$H_{0}(C)\cong R$, \ie the chain complex $C$ is acyclic. 
	
	\medskip
	Let $d\geq 1$. We assume that the theorem's statement is true for 
	pure totally regular chain complexes of order $(d-1)$ without 
	precritical basis elements. 
	
	Let $\Omega_{d}:=\{e^{d}_{1},\ldots,e^{d}_{k_{d}}\}$, so that its 
	basis elements are ordered in a regular order. For 
	$1\leq \ell\leq k_{d}$ we define:
	\begin{equation*}
		\Phi_{\ell}:=\bigcup_{i=1}^{\ell}\Omega_{e^{d}_{i}}.
	\end{equation*}
	We get
	$\Phi_{\ell-1}\cap\Omega_{e^{d}_{\ell}}		
	=\bigl(\bigcup_{i=1}^{\ell-1}\Omega_{e^{d}_{i}}\bigr)\cap\Omega_{e^{d}_{\ell}}$
	for $2\leq \ell\leq k_{d}$.
	About the chain complex $P_{\ell}\subset C$ having 
	$\Phi_{\ell-1}\cap\Omega_{e^{d}_{\ell}}$ as basis we know the 
	following:
	\begin{enumerate}
		\item  $P_{\ell}$ is a pure chain complex of order $(d-1)$ and 
		shellable because $C$ is shellable. 
	
		\item  Every subcomplex $C_{e^{\nu}_{i}}$ of $P_{\ell}$ is 
		shellable and acyclic. 
	
		\item  For every 
		$e^{\nu}_{i}\in(\Phi_{\ell-1}\cap\Omega_{e^{d}_{\ell}})$ holds:
		$\Omega_{e^{\nu}_{i}}\subset(\Phi_{\ell-1}\cap\Omega_{e^{d}_{\ell}})$. 
		Hence, the second regularity condition holds for $P_{\ell}$. 
	
		\item $P_{\ell}$ is a subcomplex of $C_{e^{d}_{\ell}}$. 
		As $e^{d}_{\ell}$ fulfils the second regularity condition and 
		$(\Phi_{\ell-1}\cap\Omega_{e^{d}_{\ell}})_{d-1}
		=\bigl(\bigl(\bigcup_{i=1}^{\ell-1}\Omega_{e^{d}_{i}}\bigr)\cap\Omega_{e^{d}_{\ell}}\bigr)_{d-1}
		\subset(\Omega_{e^{d}_{\ell}})_{d-1}
		$
		we conclude that 
		$P_{\ell}$ satisfies the first regularity condition. 
	\end{enumerate}
	Hence $P_{\ell}$ is a pure totally regular chain complex of order 
	$(d-1)$. If $P_{\ell}$ has no precritical elements in 
	$(\Phi_{\ell-1}\cap\Omega_{e^{d}_{\ell}})_{d-1}$, we can apply our 
	induction hypothesis, \ie $P_{\ell}$ is acyclic then. 
	
	So we have to show that $P_{\ell}$ has no precritical elements in 
	$(\Phi_{\ell-1}\cap\Omega_{e^{d}_{\ell}})_{d-1}$. We will do this 
	separately for $d=1$ and for $d\geq 2$. 
	\begin{description}
		\item[$d=1$]  We consider the chain complex 
		$C_{1}\rightarrow C_{0}\rightarrow 0$ with 
		$C_{1}=\erz{e^{1}_{1},\ldots,e^{1}_{k_{1}}}$, $k_{1}\geq 1$,
		about which we know the following:
		\begin{enumerate}
			\item  $\Omega_{1}=\{e^{1}_{1},\ldots,e^{1}_{k_{1}}\}$ 
			has no precriticical basis elements.
		
			\item  Every subcomplex $C_{e^{1}_{i}}$ is acyclic by
			definition. Due to Lemma~\ref{lem:anzahlcnull} we know that
			each chain module $(C_{e^{1}_{i}})_{0}$ is generated by exactly
			two elements, \ie $\anzahl(\Omega_{e^{1}_{i}})_{0}=2$.
		
			\item  $\Phi_{\ell-1}\cap\Omega_{e^{1}_{\ell}}
			=\bigl(\bigcup_{i=1}^{\ell-1}\Omega_{e^{1}_{i}}\bigr)
			\cap\Omega_{e^{1}_{\ell}}
			\neq\emptyset$
			for $2\leq \ell\leq k_{1}$ as the chain complex $C$ is 
			shellable. 
		\end{enumerate}
		So we get 
		$\anzahl(\Phi_{\ell-1}\cap\Omega_{e^{1}_{\ell}})
		=\anzahl(\Phi_{\ell-1}\cap\Omega_{e^{1}_{\ell}})_{0}
		\geq 1$. 
		Furthermore, we know that 
		\begin{equation*}
			\Phi_{\ell-1}\cap\Omega_{e^{1}_{\ell}}
			=\Biggl(
			\Bigl(\bigcup_{i=1}^{\ell-1}\Omega_{e^{1}_{i}}\Bigr)
			\cap\Omega_{e^{1}_{\ell}}
			\Biggr)_{0}
			\subset(\Omega_{e^{1}_{\ell}})_{0}.
		\end{equation*}
		As $\anzahl(\Omega_{e^{1}_{i}})_{0}=2$ we conclude:
		$\anzahl(\Phi_{\ell-1}\cap\Omega_{e^{1}_{\ell}})_{0}\leq 2$.
		If $\anzahl(\Phi_{\ell-1}\cap\Omega_{e^{1}_{\ell}})_{0}=2$,
		then $e^{1}_{\ell}$ would be precritical which contradicts our 
		assumption. 
		
		Therefore 
		$\anzahl(\Phi_{\ell-1}\cap\Omega_{e^{1}_{\ell}})_{0}=1$ for $2\leq 
		\ell\leq k_{1}$, so $\Phi_{\ell-1}\cap\Omega_{e^{1}_{\ell}}$ 
		has no precritical basis elements. 
	
		\item[$d\geq2$]  For $2\leq\ell\leq k_{d}$ let 
		$(\Phi_{\ell-1}\cap\Omega_{e^{d}_{\ell}})_{d-1}
		=\{g_{1},\ldots,g_{m_{\ell}}\}$, ordered in a regular order.
		If $m_{\ell}=1$, there is nothing left to do, so let 
		$m_{\ell}\geq 2$. 
		
		We assume: There is some $2\leq j\leq m_{\ell}$ so that $g_{j}$ 
		is precritical:
		\begin{equation*}
			a_{j}\bm_{d-1}g_{j}=\sum_{i=1}^{j-1}a_{i}\bm_{d-1}g_{i}
			\quad\text{with $a_{i}\in R$ for $1\leq i\leq j$ and $a_{j}\neq 0$. }
		\end{equation*}
		So there holds: 
		$a_{j}g_{j}-\sum\limits_{i=1}^{j-1}a_{i}g_{i}\in\ker\bm_{d-1}$. 
		
		By assumption, the chain complex $C$ has no precritical 
		elements in $\Omega_{d}$. 
		Therefore,
		$(\Phi_{\ell-1}\cap\Omega_{e^{d}_{\ell}})_{d-1}
		\subsetneqq(\Omega_{e^{d}_{\ell}})_{d-1}$. 
		Otherwise, we would get:
		\begin{align*}
			(\Omega_{e^{d}_{\ell}})_{d-1}
			=\bd(e^{d}_{\ell})
			\subset(\Phi_{\ell-1})_{d-1}
			&=\Bigl(\bigcup_{i=1}^{\ell-1}\Omega_{e^{d}_{i}}\Bigr)_{d-1} 
			\\
			&=\bigcup_{i=1}^{\ell-1}(\Omega_{e^{d}_{i}})_{d-1}
			=\bigcup_{i=1}^{\ell-1}\bd({e^{d}_{i}}),
		\end{align*}
		and because of the first regularity condition 
		$e^{d}_{\ell}$ would be precritical then. \contradiction
		
		Hence,  
		$\bm_{d}(e^{d}_{\ell})=\sum\limits_{i=1}^{m_{\ell}}b_{i}g_{i}+r$ with
		$\sum\limits_{i=1}^{m_{\ell}}b_{i}g_{i}
			\in\erz{\Phi_{\ell-1}\cap\Omega_{e^{d}_{\ell}}}$
		and some
		$r\in\erz{(\Omega_{e^{d}_{\ell}})_{d-1}\setminus\Phi_{\ell-1}}$.
		As $(\Omega_{e^{d}_{\ell}})_{d-1}=\bd({e^{d}_{\ell}})$ it is 
		$r\neq 0$. 
		
		Furthermore, we know $\bm_{d-1}\circ\bm_{d}(e^{d}_{\ell})=0$, 
		\ie $\bm_{d}(e^{d}_{\ell})\in\ker\bm_{d-1}$. 
		So, as $r\neq 0$, there are two linearly independent elements 
		in $\ker\bm_{d-1}$, namely $\bm_{d}(e^{d}_{\ell})$ and 
		$a_{j}g_{j}-\sum_{i=1}^{j-1}a_{i}g_{i}$. 
		These both elements are even contained in 
		$(C_{e^{d}_{\ell}})_{d-1}$, 
		so we conclude:
		$\ker\bm_{d-1}|_{C_{e^{d}_{\ell}}}$ is generated by at least 
		two elements. 
		
		We know that $(C_{e^{d}_{\ell}})_{d}=\erz{e^{d}_{\ell}}$, 
		therefore $\im\bm_{d}|_{C_{e^{d}_{\ell}}}$ is generated by one 
		element. Hence we conclude:
		$H_{d-1}(C_{e^{d}_{\ell}})\neq 0$ \contradiction
		 -- a contradiction to our assumption that 
		$C_{e^{d}_{\ell}}$ is acyclic, \ie $H_{d-1}(C_{e^{d}_{\ell}})=0$ if 
		$d\geq 2$. 	
		
		Therefore, $(\Phi_{\ell-1}\cap\Omega_{e^{d}_{\ell}})_{d-1}$ 
		contains no precritical basis elements. 
	\end{description}
	By induction hypothesis we conclude for $2\leq \ell\leq k_{d}$: The
	chain complex $P_{\ell}$ with basis
	$(\Phi_{\ell-1}\cap\Omega_{e^{d}_{\ell}})$ is acyclic.
	
	\medskip
	Let $Q_{j}$ be the chain complex with basis 
	$\Phi_{j}=\bigcup_{i=1}^{j}\Omega_{e^{d}_{i}}$ for 
	$1\leq j\leq k_{d}$. 
	Because $\Omega_{d}$ is ordered in a regular order, $Q_{j}$ is 
	a totally regular chain complex. 
	In particular, $Q_{1}=\erz{\Omega_{e^{d}_{1}}}=C_{e^{d}_{1}}$ 
	and $Q_{k_{d}}=\erz{\bigcup_{i=1}^{k_{d}}\Omega_{e^{d}_{i}}}=C$. 
	By assumption, $Q_{1}$ is an acyclic chain complex. 
	
	As 
	$(Q_{\ell})_{n}
	  =\erz{(\Phi_{\ell-1})_{n}\cup(\Omega_{e^{d}_{\ell}})_{n}}
	  =\erz{(\Phi_{\ell-1})_{n}}+\erz{(\Omega_{e^{d}_{\ell}})_{n}}
	  =(Q_{\ell-1})_{n}+(C_{e^{d}_{\ell}})_{n}$
	and
	$(P_{\ell})_{n}=(Q_{\ell-1})_{n}\cap(C_{e^{d}_{\ell}})_{n}$
	for any $2\leq \ell \leq k_{d}$ and
	for any $n\geq 0$
	we get the following exakt sequence:
	\begin{equation*}
	  0 \rightarrow
	  (P_{\ell})_{n} \stackrel{\varphi_{n}}{\longrightarrow}
	  (Q_{\ell-1})_{n}\oplus (C_{e^{d}_{\ell}})_{n} \stackrel{\psi_{n}}{\longrightarrow}
	  (Q_{\ell})_{n} \rightarrow 0 
	\end{equation*}
	with $\varphi_{n}(x) = (x,-x)$, $\psi_{n}(x,y) = x+y$ \cite[cf.][page~149]{Hatcher.2008}.
	Because $(C,\Omega)$ is a totally regular chain complex there is 
	some mapping $\epsilon\colon C_{0}\rightarrow R$ with 
	$\epsilon(e^{0}_{i})\neq 0$ for all $e^{0}_{i}\in\Omega_{0}$. 
	Hence each chain complex $P_{\ell}$, $Q_{\ell-1}$, $C_{e^{d}_{\ell}}$
	and $Q_{\ell}$ can be augmented by the restriction of $\epsilon$ 
	\cite[cf.][page~150]{Hatcher.2008}. 

	Furthermore, $\varphi_{n}\circ\bm_{n+1}=\bm_{n}\circ\varphi_{n+1}$ and 
	$\psi_{n}\circ\bm_{n+1}=\bm_{n}\circ\psi_{n+1}$. 
	As in \citet[page~116]{Hatcher.2008} we obtain a long exact sequence
	of reduced homology groups:
	\begin{align*}
	  \ldots \rightarrow
	  \widetilde H_{m}(P_{\ell}) \stackrel{\varphi_{\ast n}}{\longrightarrow}
	  \widetilde H_{m}(Q_{\ell-1}) \oplus \widetilde H_{m}(C_{e^{d}_{\ell}}) \stackrel{\psi_{\ast n}}{\longrightarrow}
	  \widetilde H_{m}(Q_{\ell}) \stackrel{\delta_{n}}{\longrightarrow}
	  \widetilde H_{m-1}(P_{\ell}) \stackrel{\varphi_{\ast n-1}}{\longrightarrow} \ldots\\
	  \ldots \stackrel{\delta_{1}}{\longrightarrow}
	  \widetilde H_{0}(P_{\ell}) \stackrel{\varphi_{\ast 0}}{\longrightarrow}
	  \widetilde H_{0}(Q_{\ell-1}) \oplus \widetilde H_{0}(C_{e^{d}_{\ell}}) \stackrel{\psi_{\ast 0}}{\longrightarrow}
	  \widetilde H_{0}(Q_{\ell}) \stackrel{\delta_{0}}{\longrightarrow}
	  0.  
	\end{align*}
	For an acyclic chain complex all reduced homology groups are $0$. As 
	$P_{\ell}$, $Q_{\ell-1}$ and $C_{e^{d}_{\ell}}$ are all acyclic chain complexes
	we conclude that $Q_{\ell}$ is acyclic, too. 

	Therefore, $C=Q_{k_{d}}$ is an acyclic chain complex, \ie 
	$H_{i}(C)=0$ for $i\geq 1$ and $H_{0}(C)\cong R$. 
\end{proof}

\begin{thm}
	\label{thm:hompure}
	Let $(C,\Omega)$ be a pure totally regular chain complex of 
	order~$d\geq 1$. Let the basis 
	$\Omega_{d}=\{e^{d}_{1},\ldots,e^{d}_{k_{d}}\}$ of $C_{d}$ have 
	$n<k_{d}$ precritical elements. Then there holds for the homology 
	of $C$:
	\begin{align*}
		H_{d}(C) & \cong R^{n};  \\
		H_{i}(C) & = 0 \quad\text{for $i\neq 0, d$;} \\
		H_{0}(C) & \cong R.
	\end{align*}
\end{thm}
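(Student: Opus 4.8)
The plan is to deduce the theorem from the two cases already in hand, Theorem~\ref{thm:precritical} for the top homology and Theorem~\ref{thm:hompurenoncrit} for the acyclic case, by cutting $C$ down to the subcomplex spanned by its non-precritical top cells. Since a totally regular chain complex is in particular pure and finite, Theorem~\ref{thm:precritical} applies unchanged and gives $H_d(C)\cong R^n$, so only the assertions $H_i(C)=0$ for $1\le i\le d-1$ and $H_0(C)\cong R$ remain. Write $\Omega_d=\{e_1,\dots,e_m,g_1,\dots,g_n\}$ with $m=k_d-n\ge 1$, the $g_j$ being the precritical basis elements; reordering $\Omega_d$ if necessary (cf.\ the Remarks after Definition~\ref{defn:critical} and after the definition of shellability), we may assume the non-precritical elements $e_1,\dots,e_m$ come first in a regular order. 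Put $\widehat C:=\bigcup_{i=1}^{m}C_{e_i}$, the same subcomplex that appears in the proofs of Theorems~\ref{thm:precritical} and \ref{thm:hompurenoncrit}. Exactly as in the proof of Theorem~\ref{thm:hompurenoncrit} (where the complexes $Q_j$ are shown to be totally regular), $\widehat C$ is a pure totally regular chain complex of order $d$; moreover $\widehat C$ has no precritical element in $\widehat C_d=\erz{e_1,\dots,e_m}$, because whether $e_i$ is precritical is decided by $\bm_d(e_i)$ and $\bm_d(e_1),\dots,\bm_d(e_{i-1})$ alone, in $\widehat C$ just as in $C$. Hence Theorem~\ref{thm:hompurenoncrit} shows $\widehat C$ is acyclic.

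Next I would verify that $C_\nu=\widehat C_\nu$ for all $0\le\nu\le d-1$. Since each $g_j$ is precritical, the last item of the Remark after the definition of shellability gives $\Omega_{g_j}\cap\bigl(\bigcup_{i<j}\Omega_{g_i}\bigr)=\Omega_{g_j}\setminus\{g_j\}$, and taking the components in degree $d-1$ this says
\begin{equation*}
	\bd(g_j)\subseteq\Bigl(\bigcup_{i=1}^{m}\bd(e_i)\Bigr)\cup\Bigl(\bigcup_{\ell<j}\bd(g_\ell)\Bigr),
\end{equation*}
whence $\bd(g_j)\subseteq\bigcup_{i=1}^{m}\bd(e_i)$ for every $j$ by induction on $j$. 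As $C$ is pure of order $d$ this forces $\Omega_{d-1}=\bigcup_{f\in\Omega_d}\bd(f)=\bigcup_{i=1}^{m}\bd(e_i)=(\widehat C)_{d-1}$, and then $C_\nu=\widehat C_\nu$ for all $\nu\le d-1$ follows by downward induction using purity. In particular the boundary maps of $C$ and of $\widehat C$ agree in all degrees $\le d-1$, while $C_d=\widehat C_d\oplus\erz{g_1,\dots,g_n}$.

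Granting this, the remaining homology is immediate. For $1\le i\le d-1$ the map $\bm_i$ is literally the same in $C$ and in $\widehat C$, so $\ker\bm_i^{C}=\ker\bm_i^{\widehat C}=\im\bm_{i+1}^{\widehat C}$, the last equality because $\widehat C$ is acyclic and $i\ge 1$; combined with $\im\bm_{i+1}^{\widehat C}\subseteq\im\bm_{i+1}^{C}\subseteq\ker\bm_i^{C}$ this gives $H_i(C)=0$. For $H_0$, recall that a totally regular complex carries an augmentation $\epsilon\colon C_0\to R$ with $\epsilon\circ\bm_1=0$ and $\epsilon(e^0_\ell)\ne 0$ for all $\ell$ (Lemma~\ref{lem:epsilon} and the discussion preceding the theorem), so $H_0(C)\cong\widetilde H_0(C)\oplus R$; the same $\epsilon$ augments $\widehat C$ since $\widehat C_0=C_0$, and as $\widehat C$ is acyclic one has $\widetilde H_0(\widehat C)=0$, i.e.\ $\ker\epsilon=\im\bm_1^{\widehat C}$. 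Then $\ker\epsilon=\im\bm_1^{\widehat C}\subseteq\im\bm_1^{C}\subseteq\ker\epsilon$, so $\im\bm_1^{C}=\ker\epsilon$, hence $\widetilde H_0(C)=0$ and $H_0(C)\cong R$.

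The step I expect to be the real obstacle is the one glossed over in the first paragraph: checking carefully that, once the precritical top cells have been pushed to the end, the induced order on $e_1,\dots,e_m$ is still a regular order of $\widehat C$ and that $\widehat C$ genuinely inherits \emph{all} of total regularity --- shellability, both regularity conditions, and acyclicity of every subcomplex $C_{e^\nu_i}$. This is the same type of bookkeeping already performed for the auxiliary complexes $Q_j$ in the proof of Theorem~\ref{thm:hompurenoncrit}, so no surprises should arise, but it must be carried out rather than merely asserted.
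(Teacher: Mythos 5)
Your proposal is correct and follows essentially the same route as the paper: reduce to Theorem~\ref{thm:precritical} for $H_{d}$, pass to the subcomplex $\widehat C$ spanned by the non-precritical top basis elements, show it is totally regular without precritical elements and hence acyclic by Theorem~\ref{thm:hompurenoncrit}, and transfer its homology back to $C$ using $\widehat C_{\nu}=C_{\nu}$ for $\nu\leq d-1$. Your two refinements --- the explicit verification that purity and precriticality force $\Omega_{d-1}=\bigcup_{i=1}^{m}\bd(e_{i})$, and the augmentation argument for $H_{0}$ (which also covers $d=1$, where the paper's inclusion chain $\ker\bm_{d-1}=\im\bm_{d}|_{\widehat C_{d}}$ is not literally available) --- are welcome additions rather than deviations.
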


\begin{rem}
  $H_{0}(C)\cong R^{k_{0}}$ if $d=0$, cf. Remark~\ref{rem:complexoforderzero}.
\end{rem}

\begin{proof}
	We can assume that all noncritical elements in $\Omega_{d}$ come 
	first in the regular order. Otherwise we can change the order so that 
	the precritical (and critical) elements of $\Omega_{d}$ come at 
	last; this has no influence to the shellability and regularity of 
	$C$. Let $m:=k_{d}-n$, then we have:
	\begin{equation*}
		\Omega_{d}=\{
		\underbrace{e^{d}_{1},\ldots,e^{d}_{m}}_{\text{noncritical}},
		\underbrace{e^{d}_{m+1},\ldots,e^{d}_{k_{d}}}_{\text{precritical}}\}.
	\end{equation*}
	Due to Theorem~\ref{thm:precritical} we know that 
	$H_{d}(C)\cong R^{n}$. 
	
	Consider now the chain complex 
	$\widehat C:=\erz{\bigcup_{i=1}^{m}\Omega_{e^{d}_{i}}}$ which is a 
	pure subcomplex of $C$ of order $d$. Its chain modules are 
	$\widehat C_{d}=\erz{e^{d}_{1},\ldots,e^{d}_{m}}$ and 
	$\widehat C_{\nu}=C_{\nu}$ for $0\leq\nu\leq d-1$. 
	As $C$ is a totally regular chain complex this holds for 
	$\widehat C$, too. In opposite to $C$ the chain complex 
	$\widehat C$ has no precritical elements. Due to 
	Theorem~\ref{thm:hompurenoncrit} the complex $\widehat C$ is acyclic, \ie 
	$H_{0}(\widehat C)\cong R$ and 
	$H_{i}(\widehat C)=0$ for $i\geq 1$. 
	
	Because $\widehat C_{d-1}=C_{d-1}$ and $H_{d-1}(\widehat C)=0$
	we get
	\begin{equation*}
		\im\bm_{d}|_{\widehat C_{d}}
		\subset\im\bm_{d}
		\subset\ker\bm_{d-1}
		=\ker\bm_{d-1}|_{\widehat C_{d-1}}
		=\im\bm_{d}|_{\widehat C_{d}}.
	\end{equation*}
	Therefore, $\im\bm_{d}|_{\widehat C_{d}}=\im\bm_{d}$.
	As $\widehat C_{\nu}=C_{\nu}$ for $0\leq\nu\leq d-1$ we conclude:
	\begin{align*}
		H_{i}(C) & =H_{i}(\widehat C)=0 
		\quad\text{for $1\leq i\leq d-1$,} \\
		H_{0}(C) & =H_{0}(\widehat C) \cong R.
		\qedhere
	\end{align*}
\end{proof}

\subsection{Homology of arbitrary Totally Regular Chain Complexes}
\label{ssec:homologyarb}
We consider the general case and compute the homology of 
arbitrary totally regular chain complexes. 

\begin{thm}
	Let $(C,\Omega)$ be a totally regular chain complex of 
	order~$d\geq 1$. For any $0\leq\nu\leq d$ let 
	$\Omega_{\nu}:=\{e^{\nu}_{1},\ldots,e^{\nu}_{k_{\nu}}\}$ be the 
	basis of the chain module $C_{\nu}$. 
	Let 
	$\Gamma$ be the subset of $\Omega$ which contains all maximal basis elements. 
	For $0\leq\nu\leq d$, let there be $n_{\nu} < k_{\nu}$ precritical 
	elements in $(\Gamma\cap\Omega_{\nu})$. 
	Then the homology of $C$ is:
	\begin{align*}
		H_{i}(C) & \cong R^{n_{i}} \quad\text{for $1\leq i\leq d$;} \\
		H_{0}(C) & \cong R^{n_{0}+1}.
	\end{align*}
\end{thm}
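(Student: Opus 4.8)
The plan is to prove the theorem by induction on the order $d$, peeling the top degree off with the pure subcomplex $\widehat C:=\bigcup_{j=1}^{k_d}C_{e^d_j}$ and reducing everything below to the $(d-1)$-skeleton. First I would record the structure of $\widehat C$: it is a pure, totally regular chain complex of order $d$ (its subcomplexes $C_{e^\nu_i}$ are subcomplexes of $C$, hence acyclic); one has $\widehat C_d=C_d$; and, since a basis element of degree $d-1$ is non-maximal in $C$ precisely when it lies in the boundary of some element of $\Omega_d$, the basis of $\widehat C_{d-1}$ is exactly $\Omega_{d-1}\setminus\Gamma$, so that $C_{d-1}=\widehat C_{d-1}\oplus\erz{\Gamma\cap\Omega_{d-1}}$. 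As $\widehat C$ has $n_d$ precritical elements in $\Omega_d$ and $n_d<k_d$, Theorem~\ref{thm:hompure} gives $H_d(\widehat C)\cong R^{n_d}$, $H_0(\widehat C)\cong R$ and $H_i(\widehat C)=0$ for $0<i<d$. Because $\widehat C_d=C_d$ we get $H_d(C)=\ker\bm_d=H_d(\widehat C)\cong R^{n_d}$ in every case, and from $H_{d-1}(\widehat C)=0$ (for $d\geq2$) we get $\im\bm_d=\ker\bigl(\bm_{d-1}|_{\widehat C_{d-1}}\bigr)=\ker\bm_{d-1}\cap\widehat C_{d-1}$.

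For the base case $d=1$: from $H_0(\widehat C)=\widehat C_0/\im\bm_1\cong R$ the submodule $\im\bm_1$ is free of rank $|\Omega_0\setminus\Gamma|-1$, and since $C_0/\widehat C_0\cong\erz{\Gamma\cap\Omega_0}$ is free, the sequence $0\to\widehat C_0/\im\bm_1\to C_0/\im\bm_1\to C_0/\widehat C_0\to0$ splits, so $H_0(C)=C_0/\im\bm_1$ is free of rank $|\Gamma\cap\Omega_0|+1$. Since $\bm_0=0$, every basis element of $\Gamma\cap\Omega_0$ with index $\geq2$ is trivially precritical, and because $\widehat C_0\neq0$ there is at least one non-maximal element in $\Omega_0$, so all of $\Gamma\cap\Omega_0$ has index $\geq2$; hence $n_0=|\Gamma\cap\Omega_0|$ and $H_0(C)\cong R^{n_0+1}$.

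For the inductive step $d\geq2$, set $S:=\sk_{d-1}(C)$, which is totally regular of order $d-1$ by Corollary~\ref{cor:totallyregularskeleton}. Since $S$ and $C$ share chain modules and boundary maps in degrees $\leq d-1$, we have $H_i(C)=H_i(S)$ for $0\leq i\leq d-2$, and the maximal basis elements agree in degrees $\leq d-2$, so the precritical counts agree there; the induction hypothesis then gives $H_i(C)\cong R^{n_i}$ for $1\leq i\leq d-2$ and $H_0(C)\cong R^{n_0+1}$. It remains to treat $H_{d-1}(C)=\ker\bm_{d-1}/\im\bm_d$. Here $\ker\bm_{d-1}=H_{d-1}(S)\cong R^{n'_{d-1}}$ by the induction hypothesis, $n'_{d-1}$ being the number of precritical elements of $\Omega_{d-1}$ as the top basis of $S$; and $\im\bm_d=\ker\bigl(\bm_{d-1}|_{\widehat C_{d-1}}\bigr)=H_{d-1}\bigl(\sk_{d-1}(\widehat C)\bigr)\cong R^{p}$ by Theorem~\ref{thm:hompure} applied to the pure totally regular complex $\sk_{d-1}(\widehat C)$ (which is pure totally regular of order $d-1$ by Lemma~\ref{lem:skeletonreg}), with $p$ the number of precritical elements among $\Omega_{d-1}\setminus\Gamma$. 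Since $\im\bm_d=\ker\bm_{d-1}\cap\widehat C_{d-1}$, the quotient $H_{d-1}(C)$ embeds into $C_{d-1}/\widehat C_{d-1}\cong\erz{\Gamma\cap\Omega_{d-1}}$ and is therefore free, and Theorem~\ref{thm:degree} gives $\degf_R H_{d-1}(C)=n'_{d-1}-p$. Finally, ordering $\Omega_{d-1}$ inside $S$ as in Remark~\ref{rem:specialordering} — the elements of $\Omega_{d-1}\setminus\Gamma$ first in a regular order, then $\Gamma\cap\Omega_{d-1}$ — the precriticality test for an element of $\Gamma\cap\Omega_{d-1}$ is literally the same in $S$ as in $C$, so $n'_{d-1}=p+n_{d-1}$ and $H_{d-1}(C)\cong R^{n_{d-1}}$.

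I expect the main obstacle to be this last accounting: one must verify that the number of precritical elements of $\Omega_{d-1}$ seen as the top layer of $S$ splits cleanly as $p+n_{d-1}$. This rests on precriticality being a purely sequential condition (it only refers to the basis elements listed earlier), together with the fact that total regularity of $S$ permits — via the construction in Remark~\ref{rem:specialordering} and Lemma~\ref{lem:skeletonreg} — an ordering of $\Omega_{d-1}$ compatible with the one fixed on $C$. A secondary but necessary point is that every module occurring in the argument ($\ker\bm_{d-1}$, $\im\bm_d$, $H_{d-1}(C)$, $C_0/\im\bm_1$) is free over the principal ring $R$, so that comparing degrees of freedom via Theorem~\ref{thm:degree} actually pins down the isomorphism type.
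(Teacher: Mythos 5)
Your argument is correct, and at its core it runs on the same machinery as the paper's proof: Theorem~\ref{thm:hompure} applied to pure totally regular subcomplexes identifies $\ker\bm_{\mu}$ and $\im\bm_{\mu+1}$ as free modules of known rank, $\im\bm_{\mu+1}$ is pinned down as $\ker\bm_{\mu}$ intersected with the span of the non-maximal part of $\Omega_{\mu}$ via acyclicity of the relevant pure piece, and the quotient is free because it embeds into $\erz{\Gamma\cap\Omega_{\mu}}$. The difference is organizational: the paper works degree by degree without induction, using for each $\mu$ the three pure subcomplexes generated by $\bigcup_{i\le k_{\mu}}\Omega_{e^{\mu}_{i}}$, by $\bigcup_{i\le k_{\mu+1}}\Omega_{e^{\mu+1}_{i}}$, and by $\bigcup_{i\le m_{\mu}}\Omega_{e^{\mu}_{i}}$, whereas you induct on the order $d$, peel off the top with $\widehat C$ and $\sk_{d-1}(C)$, and only do the rank bookkeeping once, in degree $d-1$. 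Your version buys a shorter argument in the lower degrees (they come for free from the induction hypothesis) at the price of two hypotheses you should verify explicitly: (i) that $\sk_{d-1}(C)$ actually satisfies the hypothesis $n^{S}_{d-1}<k_{d-1}$ of the theorem being invoked inductively --- this holds because $m_{d-1}\geq 1$ (total regularity forces $\bd(e^{d}_{i})\neq\emptyset$) and $e^{d-1}_{1}$ can never be precritical, so $p\leq m_{d-1}-1$; and (ii) that the regular order on $\Omega_{d-1}$ as top basis of $\sk_{d-1}(C)$ can be taken to be ``$\Omega_{d-1}\setminus\Gamma$ first, then $\Gamma\cap\Omega_{d-1}$'' with the lower bases unchanged, which is exactly what the proof of Theorem~\ref{thm:skeletonreg} together with Remark~\ref{rem:specialordering} supplies, and which is what makes the count $n^{S}_{d-1}=p+n_{d-1}$ and the equality of the precritical counts in degrees $\leq d-2$ legitimate. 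Your final accounting $n^{S}_{d-1}-p=n_{d-1}$ is the same computation the paper performs with $\ker\bm_{\mu}\cong R^{n_{\mu}+\ell_{\mu}}$ and $\im\bm_{\mu+1}\cong R^{\ell_{\mu}}$ (your $p$ is the paper's $\ell_{d-1}$); you are in fact slightly more explicit than the paper about why the quotient is free, which is a point the paper passes over quickly.
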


\begin{proof}
	We assume that $\Gamma$ is ordered in a regular order.
	Then each $\Omega_{\nu}$ is ordered so that the elements of 
	$\Gamma$ come at last; let
	\begin{equation*}
		\Omega_{\nu}:=\{
		\underbrace{e^{\nu}_{1},\ldots,e^{\nu}_{m_{\nu}}}_{\not\in\Gamma},
		\underbrace{e^{\nu}_{m_{\nu}+1},\ldots,e^{\nu}_{k_{\nu}}}_{\in\Gamma}\}.
	\end{equation*}
	So, we even have $n_{\nu}\leq k_{\nu}-m_{\nu}$.
	
	We consider the chain complex 
	$C^{d}:=\erz{\bigcup_{i=1}^{k_{d}}\Omega_{e^{d}_{i}}}$ which is a 
	subcomplex of $C$. It is pure of order $d$ and totally regular. 
	The reader may notice that $C^{d}=C$ if $\Gamma=\Omega_{d}$. 
	According to Theorem~\ref{thm:hompure} we know:
	\begin{align*}
		H_{d}(C^{d}) & \cong R^{n_{d}};  \\
		H_{i}(C^{d}) & = 0 \quad\text{for $i\neq 0, d$;} \\
		H_{0}(C^{d}) & \cong R.
	\end{align*}
	As $(C^{d})_{d}=C_{d}$ we get $H_{d}(C) \cong R^{n_{d}}$.
	
	For $1\leq\mu\leq d-1$ 
	we consider the chain complex
	$C^{\mu}:=\erz{\bigcup_{i=1}^{k_{\mu}}\Omega_{e^{\mu}_{i}}}
	\subset\sk_{\mu}(C)$ which is pure and finite of order $\mu$.  
	We know that each $\mu$-skeleton of $C$ is shellable 
	and totally regular. 
	Let $\Omega_{\sk_{\mu}}$ be the basis of $\sk_{\mu}(C)$ and 
	$\Gamma_{\sk_{\mu}}:=
	\set{e\in\Omega_{\sk_{\mu}}}{e\not\in\bd(f) 
	\text{ for all }f\in\Omega_{\sk_{\mu}}}$, ordered in a 
	regular order. 
	As a regular order is always monotonically descending, 
	the elements $e^{\mu}_{1},\ldots,e^{\mu}_{k_{\mu}}$ come first 
	in the regular order of $\Gamma_{\sk_{\mu}}$. Therefore, the complex 
	$C^{\mu}$ is also shellable and totally regular. 
	
	Let there be $\ell_{\mu}$ precritical basis elements in 
	$\{e^{\mu}_{1},\ldots,e^{\mu}_{m_{\mu}}\}$. 
	By Theorem~\ref{thm:hompure} we get:
	\begin{align*}
		H_{\mu}(C^{\mu}) & \cong R^{n_{\mu}+\ell_{\mu}};  \\
		H_{i}(C^{\mu}) & = 0 \quad\text{for $i\neq 0, \mu$;} \\
		H_{0}(C^{\mu}) & \cong R.
	\end{align*}
	Therefore, we also know $H_{\mu}(C^{\mu+1})=0$. Because
	$(C^{\mu+1})_{\mu+1}=C_{\mu+1}$ we get: 
	\begin{equation*}
		\im\bm_{\mu+1}=\im\bm_{\mu+1}|_{C^{\mu+1}}
		=\ker\bm_{\mu}|_{\erz{e^{\mu}_{1},\ldots,e^{\mu}_{m_{\mu}}}}.
	\end{equation*}
	As the chain complex 
	$\widehat C^{\mu}:=\erz{\bigcup_{i=1}^{m_{\mu}}\Omega_{e^{\mu}_{i}}}
	\subset C^{\mu}$ is pure of order $\mu$ and totally regular, there 
	holds due to Theorem~\ref{thm:hompure}:
	$H_{\mu}(\widehat C^{\mu})
	\cong\ker\bm_{\mu}|_{\erz{e^{\mu}_{1},\ldots,e^{\mu}_{m_{\mu}}}}
	\cong R^{\ell_{\mu}}$. 
	
	It is $\Gamma\cap\Omega_{\mu}
	=\{e^{\mu}_{m_{\mu}+1},\ldots,e^{\mu}_{k_{\mu}}\}$, 
	hence 
	$e^{\mu}_{i}\not\in\im\bm_{\mu+1}$ 
	for $m_{\mu}+1\leq i\leq k_{\mu}$. 
	We conclude:
	\begin{equation*}
		H_{\mu}(C)=\quotient{\ker\bm_{\mu}}{\im\bm_{\mu+1}}
		\cong R^{n_{\mu}}
		\quad\text{for $1\leq\mu\leq d-1$}. 
	\end{equation*}

	For $\mu=0$ we consider the chain complex 
	$C^{1}:=\erz{\bigcup_{i=1}^{k_{1}}\Omega_{e^{1}_{i}}}\subset\sk_{1}(C)$ 
	which is pure of order $1$ and totally regular. 
	Hence, $H_{0}(C^{1})
	=\quotient{\erz{e^{0}_{1},\ldots,e^{0}_{m_{0}}}}{\im\bm_{1}}
	\cong R$. 
	Because $e^{0}_{i}\not\in\im\bm_{1}$ for $m_{0}+1\leq i\leq k_{0}$
	we get:	
	\begin{equation*}
		H_{0}(C)=\quotient{C_{0}}{\im\bm_{1}}
		\cong R^{n_{0}+1}.
		\qedhere
	\end{equation*}
\end{proof}

\section*{Acknowledgements}
\noindent
We would like to thank Eva-Maria Feichtner and Dmitry N. Kozlov 
for bringing chain complexes to our attention. Many thanks also 
to Ralf Donau who had a good idea how to prove the existence of 
monotonically descending shellings and to Emanuele Delucchi for some good 
advice and helpful discussions.


\begin{thebibliography}{00}

\newcommand{\enquote}[1]{``#1''}
\providecommand{\natexlab}[1]{#1}
\providecommand{\url}[1]{\texttt{#1}}
\providecommand{\urlprefix}{URL }
\expandafter\ifx\csname urlstyle\endcsname\relax
  \providecommand{\doi}[1]{doi:\discretionary{}{}{}#1}\else
  \providecommand{\doi}{doi:\discretionary{}{}{}\begingroup
  \urlstyle{rm}\Url}\fi
\providecommand{\selectlanguage}[1]{\relax}
\providecommand{\eprint}[2][]{\url{#2}}

\bibitem[{Bj{\"o}rner(1992)}]{Bjorner.1992}
Anders Bj{\"o}rner: \enquote{The Homology and Shellability of Matroids and
  Geometric Lattices}.
\newblock In: Neil White (ed.), \enquote{Matroid Applications},
  \emph{Encyclopedia of Mathematics and its Applications}, vol.~40, chap.~7,
  pp. 226~--~283. Cambridge: Cambridge University Press (1992).
\newblock \doi{10.1017/CBO9780511662041.008}.

\bibitem[{Bj{\"o}rner \emph{et~al.}(1999)Bj{\"o}rner, Las~Vergnas, Sturmfels,
  White, and Ziegler}]{BjornerLas-Vergnas.1999}
Anders Bj{\"o}rner, Michel Las~Vergnas, Bernd Sturmfels, Neil White, and
  G{\"u}nter~M. Ziegler: \emph{Oriented Matroids}, \emph{Encyclopedia of
  Mathematics and its Applications}, vol.~46.
\newblock 2nd ed. Cambridge: Cambridge University Press (1999).
\newblock \doi{10.1017/CBO9780511586507}.

\bibitem[{Bosch(2004)}]{bosch}
Siegfried Bosch: \emph{Algebra}.
\newblock 5., {\"u}berarbeitete Aufl. Berlin; Heidelberg; New York: Springer
  (2004).

\bibitem[{Hatcher(2008)}]{Hatcher.2008}
Allen Hatcher: \emph{Algebraic Topology}.
\newblock 10th printing. Cambridge: Cambridge University Press (2008).
\newline\urlprefix\url{http://www.math.cornell.edu/~hatcher/AT/ATpage.html}

\bibitem[{Kozlov(2008)}]{Kozlov.2008a}
Dmitry Kozlov: \emph{Combinatorial Algebraic Topology}, \emph{Algorithms and
  Computations in Mathematics}, vol.~21.
\newblock Berlin; Heidelberg: Springer (2008).
\newblock \doi{10.1007/978-3-540-71962-5}.

\bibitem[{Lang(2002)}]{lang}
Serge Lang: \emph{Algebra}, \emph{Graduate Texts in Mathematics}, vol. 211.
\newblock Rev. 3rd ed. New York; Berlin; Heidelberg: Springer (2002).

\bibitem[{Oeljeklaus and Remmert(1974)}]{OeljeklausRemmert.1974}
Eberhard Oeljeklaus and Reinhold Remmert: \emph{Lineare Algebra I},
  \emph{Heidelberger Taschenb{\"u}cher}, vol. 150.
\newblock Berlin; Heidelberg: Springer (1974).

\end{thebibliography}
\end{document}